\theoremstyle{plain}
\newtheorem{definition}{Definition}[section]
\newtheorem{theorem}{Theorem}[section]
\newtheorem{lemma}[theorem]{Lemma}
\newtheorem{corollary}[theorem]{Corollary}
\newtheorem{proposition}[theorem]{Proposition}
\theoremstyle{definition}
\newtheorem*{Claim}{Claim}
\theoremstyle{remark}                  
\newcommand{\ud}{\underline}
\definecolor{darkgreen}{rgb}{0,0.4,0}
\newcommand{\R}{\mathbb{R}}
\DeclareMathOperator*{\esssup}{ess\,sup}
\numberwithin{equation}{section}
\def \l {\left(}
\def\r {\right)}
\def\e {\epsilon}
\def\XXint#1#2#3{{\setbox0=\hbox{$#1{#2#3}{\int}$ }
\vcenter{\hbox{$#2#3$ }}\kern-.6\wd0}}
\title[Degenerate Equations with Drifts]{Regularity Properties of Degenerate Diffusion Equations with Drifts}
\author[Inwon Kim and Yuming Paul Zhang]{\bfseries Inwon Kim and  Yuming Paul Zhang}
\address{
(I. Kim) Department of Mathematics \\ 
University of California   \\ 
Los Angeles\\
USA}
\email{ikim@math.ucla.edu}
\address{
(Y. Zhang) Department of Mathematics \\ 
University of California   \\ 
Los Angeles\\
USA}
\email{yzhangpaul@math.ucla.edu}
\begin{document}

\vspace{18mm} \setcounter{page}{1} \thispagestyle{empty}

\begin{abstract}
This paper considers a class of nonlinear, degenerate drift- diffusion equations. We study well-posedness and regularity properties of the solutions, with the goal to achieve uniform H\"{o}lder regularity in terms of $L^p$-bound on the drift vector field. A formal scaling argument yields that the threshold for such estimates is $p=d$, while our estimates are for $p>d+\frac{4}{d+2}$. On the other hand we are able to show by a series of examples that one needs $p>d$ for such estimates, even for divergence free drift.

\end{abstract}

\maketitle

\section{Introduction}

Let $u=u(x,t)$ be a nonnegative function which solves the following problem:
\begin{equation}\label{main}
    u_t=\Delta u^m+\nabla\cdot(u V)\quad \text{ in }\mathbb{R}^d\times[0,\infty) \quad \hbox{ with } m>1.
\end{equation}
The drift term $V:\R^d\to \R^d$ is assumed to be time-independent, though our results extend to $V(x,t) \in L^{\infty}(L^p(\R^d); \R^+)$.  

\medskip

The $m>1$ in the nonlinear diffusion term above represents anti-congestion effect, and has been considered in many physical applications, including fluids in porous medium and population dynamics. Our system \eqref{main} can be thus naturally contextualized as a population moving with preferences or fluids in a porous medium moving with wind (see e.g. \cite{hw95, density, carrillo, chayes, kimlei, mnr2013}). The goal of this paper is to investigate well-posedness and regularity properties of \eqref{main} in terms of bounds of $V$ in $L^p(\R^d)$.




\medskip

When $m=1$, our equation is the classical drift-diffusion equation where an extensive literature is available for the corresponding regularity results, as we will discuss below. When $V=0$, \eqref{main} is the classical {\it porous medium equation} (see the book \cite{vazquez}) where initially integrable, nonnegative weak solutions exist, is unique and immediately become H\"{o}lder continuous for positive times. In contrast to these two cases,  few regularity results are available for \eqref{main} with $m>1$ and nonzero $V$, even in smooth settings. Below we discuss differences in local behaviors of solutions between our equation and the aformentioned cases by a scaling argument.

\medskip

For given $a,r>0$, let $u_{a,r}(x,t):=au(rx,r^2 a^{m-1}t)$. Then $\tilde{u}:=u_{a,r}$ solves
\begin{equation*}\label{rescale}\partial_t \tilde{u}=\Delta \tilde{u}^m+ \nabla\cdot (\tilde{V} \tilde{u})\;\text{ with } \tilde{V}(x):=a^{m-1}rV(rx).\end{equation*}
When $V=0$, the above scaling was used in \cite{dibenedetto1983continuity, dibenedetto2} along with Di Giorgi-Nash-Moser iteration arguments to derive H\"{o}lder continuity results. Here $1/a$ is chosen to be  the size of oscillation for $u_{a,r}$ in the unit neighborhood, and our goal is to show that this oscillation decays with a polynomial rate as $r\to 0$.  Thus our interest is in the case when the oscillation is large, i.e. when  $a\leq r^{-\e}$ for arbitrary small $\e>0$.  Note that 

$$
\|\tilde{V}(\cdot)\|_{L^p(\R^d)}=a^{m-1} r^{1-\frac{d}{p}}\|V(\cdot)\|_{L^p(\R^d)}.
$$

Recalling that $a$ is bounded by an arbitrarily small negative power of $r>0$, it is plausible that if $V$ is bounded in $L^p(\R^d)$ for some $p>d$, then solutions to \eqref{main} behave like the classical porous medium equation in small scales and generate bounded, H\"{o}lder continuous solutions.  Indeed when $V \in L^p(\R^d)$ with $p>d$  we will show that weak solutions exist and stay bounded for all times, if the solutions are initially bounded.

\medskip

These heuristics however pose serious challenges to deliver uniform regularity results for our equation. The most apparent difference from the linear case comes from the fact that our diffusion is degenerate at low densities. Due to this degeneracy, the proof of oscillation reduction in \cite{dibenedetto1983continuity, dibenedetto2} already differs from the standard ones.  For us this step corresponds to Proposition~\ref{prop1} and Proposition~\ref{prop2}, which turns out to be more challenging due to the competition between the singularity of the drift and the degeneracy of the diffusion in small scales. Indeed for this reason we can only show the uniform H\"{o}lder continuity of solutions when $p>d+\frac{4}{d+2}$. Whether the results extend to the remaining range $d<p\leq d+\frac{4}{d+2}$ or not remains as an interesting open question. We expect that it possibly requires new ideas to extend the ``oscillation reduction" mentioned above up to $p=d$. Perhaps for the same reason it stays also open to show that solutions become immediately bounded when starting with merely  integrable initial data, when $p>d$. 

\medskip

   On the other hand we are able to show that when $p\leq d$, uniform H\"{o}lder estimates are impossible even among divergence-free vector fields, thus establishing half of the sharp threshold. This is again expected to hold from the above heuristics, however the corresponding result does not seem to be shown for the linear case $m=1$ to the best of our knowledge. Our proof, based on barrier arguments akin to \cite{loss}, uses the degeneracy of diffusion at low densities and thus cannot be extended to the linear case.

\medskip




\medskip





Below we state two theorems that summarizes our main results.

\medskip

\begin{theorem}[Well-posedness and regularity]
Let us consider \eqref{main} with nonnegative initial data $u_0\in L^1(\R^d)\cap L^{\infty}(\R^d)$ and with $\|V\|_{L^p(\mathbb{R}^d)}<\infty$.
\begin{itemize}
\item[(a)]{\rm [Theorem~\ref{uniformb}] } If  $p>d$, then there exists a weak solution $u\in C([0,\infty), L^1(\R^d))$. Moreover
$u$ is uniformly bounded for all $t\in [0,\infty)$,
$$
\sup |u| \leq C(m,d,\|u_0\|_{\infty},\|u_0\|_1,p,\|V\|_p).
$$
\item[(b)]{\rm [Theorem~\ref{unique:local}]} The weak solution is unique if $V$ is uniformly $C^1$ in $\R^d$. \\
\item[(c)]{\rm [Theoreml~\ref{thmcont}]} If $p> d+\frac{4}{d+2}$, and if $u$ is a weak solution of \eqref{main} in $\{|x|\leq 1\}\times [0,1]$ that is also bounded, then $u$ is H\"{o}lder continuous in $ \{|x|<1\} \times (0,1)$. 
\end{itemize}
\end{theorem}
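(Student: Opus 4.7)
The plan is to handle the three parts separately, using regularization and Moser iteration for (a), an Oleinik-type duality for (b), and intrinsic-scaling De Giorgi iteration for (c).

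For (a), I would approximate the problem by mollifying the drift, smoothing $u_0$, and replacing $u^m$ with $(u+\delta)^m-\delta^m$ so that standard quasilinear parabolic theory produces a smooth classical solution $u^\delta$. The heart of the matter is a uniform $L^\infty$ bound. Testing against $u^{q-1}$ and integrating by parts, the drift contribution takes the form
\[
-(q-1)\int u^{q-1}V\cdot\nabla u\,dx,
\]
which I control by Cauchy--Schwarz and H\"older: $|V|\in L^p$ pairs with a power of $u$ through Sobolev embedding, while $|\nabla u^{(q+m-1)/2}|$ is absorbed into the dissipation $c(m,q)\int|\nabla u^{(q+m-1)/2}|^2$ produced by the diffusion. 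The condition $p>d$ is exactly what permits this absorption and closes the Moser iteration, yielding an $L^\infty$ bound depending only on the listed parameters. Passing to the limit $\delta\to 0$ by compactness ($\nabla (u^\delta)^m$ bounded in $L^2_{\mathrm{loc}}$, with time equicontinuity from the $L^1$-contraction) produces a weak solution in $C([0,\infty);L^1)$.

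For (b), given $V\in C^1$ and two weak solutions $u_1,u_2$ with common initial datum, I would run the Oleinik duality argument. Writing $w=u_1-u_2$ and $a(x,t)=(u_1^m-u_2^m)/w\in[0,C\|u\|_\infty^{m-1}]$, the difference equation is
\[
w_t=\Delta(aw)+\nabla\cdot(wV).
\]
For a smooth test function $\phi$ and $T>0$, solve the uniformly parabolic backward adjoint
\[
\psi_s+(a+\varepsilon)\Delta\psi+V\cdot\nabla\psi=0,\qquad \psi(\cdot,T)=\phi,
\]
whose classical solvability uses $V\in C^1$ and boundedness of $a$. Pairing with $w$ produces $\int w(T)\phi$ modulo an $O(\varepsilon)$ residual controlled by $L^2$ estimates on $\Delta\psi$; sending $\varepsilon\to 0$ forces $w\equiv 0$, hence uniqueness.

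For (c), I would reduce H\"older continuity to a quantitative oscillation-decay at every scale using the rescaling $u_{a,r}(x,t)=au(rx,r^2a^{m-1}t)$ from the excerpt. Taking $a\sim(\osc_{Q_r}u)^{-1}$ and isolating the regime $a\le r^{-\varepsilon}$ (the opposite case yields decay trivially), the rescaled drift obeys $\|\tilde V\|_p=a^{m-1}r^{1-d/p}\|V\|_p$, which is small past the threshold. Propositions~\ref{prop1} and~\ref{prop2} then deliver the oscillation reduction in the non-degenerate regime (where $u$ is close to its supremum and the equation is uniformly parabolic) and in the degenerate regime (where $u$ vanishes and the PME scaling dominates) via De Giorgi energy and logarithmic estimates. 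The main obstacle is the degenerate step: the vanishing coefficient $mu^{m-1}$ forces an intrinsic cylinder whose time scale depends on the current oscillation, and balancing the resulting drift errors against the PME scaling is what produces the actual threshold $p>d+\tfrac{4}{d+2}$ rather than the heuristic $p>d$; the deficit $\tfrac{4}{d+2}$ reflects the loss in the Sobolev exponent at the natural parabolic level, and tightening it to $p>d$ appears to require a genuinely new technique.
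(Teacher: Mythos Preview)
Your outline matches the paper's strategy in all three parts: Moser-type iteration on a regularized problem for (a), Oleinik duality for (b), and the DiBenedetto--Friedman intrinsic-scaling alternative (Propositions~\ref{prop1}--\ref{prop2}) for (c). One clarification on (b): the $C^1$ hypothesis on $V$ is not used merely for classical solvability of the backward dual problem---that would follow from boundedness alone---but rather enters quantitatively in the a~priori estimate for $\int a_{N,\varepsilon}|\Delta\psi|^2$, where the cross term $\int V\cdot\nabla\psi\,\Delta\psi$ must be integrated by parts and controlled via the two-sided bound $-MI_d\le DV\le MI_d$; without this the residual cannot be shown to vanish as $\varepsilon\to0$. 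A minor point on (a): the paper obtains time compactness not from an $L^1$-contraction (which it only proves for strong solutions under an additional restriction on $m$) but from the uniform $L^2$ bound on $\nabla u_\varepsilon^m$ together with the equation, following \cite{bedrossian2011}.
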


\medskip

As for (b), when $V$ is not $C^1$, general uniqueness of weak solutions are open except between strong solutions: see Theorem~\ref{unique:local}. 

\medskip

Regarding (c),  the only relevant result for \eqref{main} that we are aware of is from \cite{chkk}, where integrability conditions are assumed on both $V$ and $\nabla V$. Let us also very briefly mention some results for the linear case $m=1$ where the threshold $L^\infty_tL^d_x$ remains the same. In \cite{Friedlander2011,Sverak} it is shown that if $V\in L^\infty([0,T],BMO^{-1}(\R^d))$, then an initially integrable solution becomes immediately H\"{o}lder continuous. Let us mention that $BMO^{-1}(\R^d)$ shares the same scaling property with $L^d(\R^d)$, however the corresponding result for $L^d(\R^d)$ drifts is open except for the stationary case, see \cite{Safonov}. In two dimensions, even $L^1$-bound for time independent divergence-free drift turns out to be sufficient to yield continuous solutions (\cite{Sverak},\cite{loss}). Corresponding regularity results for $m>1$ in two dimensions remains open.

\medskip
  
Next we state the singularity results for the threshold case, where $V\in L^d(\R^d)$.

 \begin{theorem}[Loss of regularity]
  There exist sequences of vector fields $\{V^i_n\}_n$, $i=1,2$, which are uniformly bounded in $L^d(\R^d)$, along with sequences of compactly supported, uniformly bounded initial data $\{u^i_{0,n}\}$, $i=1,2$, such that the following holds:
    \medskip
  
  \begin{itemize}
  \item[(a)]{\rm [Theorem~\ref{thm5.2}]} The solutions  $\{u_n^1\}_n$ of \eqref{main} with $V=V^1_n$ and initial data $u_{0,n}^1$ satisfies \\$\lim_{T\to\infty} \sup_{[0,T] }|u^1_n| =\infty$; \\
\item[(b)]{\rm [Theorem~\ref{thm:lackofcont}] }The solutions $\{u^2_n\}_n$ of \eqref{main} with $V=V^2_n$ and initial data $u_{0,n}^2$ stays uniformly bounded, but  they do not share any common mode of continuity.
\end{itemize}
\end{theorem}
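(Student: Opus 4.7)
The two constructions share a common mechanism tailored to the degenerate case $m>1$: under the natural parabolic rescaling of \eqref{main}, the $L^d$-norm of $V$ is scale-invariant, while the PME diffusivity $\sim u^{m-1}$ vanishes where $u$ is small. An inward-focusing radial drift of uniformly bounded $L^d$-norm can therefore compress a thin, low-density initial bump faster than the porous-medium diffusion can dissipate it, with the achievable compression growing as $n\to\infty$. This yields blow-up in $L^\infty$ for (a) and an arrested version for (b).

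\textbf{Plan for (a).} Take $V_n^1(x) = -c_n\,\psi_n(|x|)\,\hat x/|x|$ for a smooth radial cutoff $\psi_n$ supported in $[r_n,1]$ with $r_n\to 0$, and choose $c_n$ (for example $c_n\sim(\log(1/r_n))^{-1/d}$) so that $\|V_n^1\|_{L^d}^d\sim c_n^d\log(1/r_n)$ stays uniformly bounded. Pick $u^1_{0,n}$ as a thin radial shell at $|x|\simeq 1$ of small height $\epsilon_n\to 0$. Then: (i) construct sub- and supersolutions by advecting a Barenblatt-type profile of small mass along the drift flow $\dot r = -c_n/r$; the ODE preserves $r^2-2c_n t$, so a shell at radius $R$ of thickness $\tau$ is mapped to one at radius $r$ of thickness $R\tau/r$, and mass conservation amplifies the density by $(R/r)^{d-2}$ in dimensions $d\ge 3$; (ii) tune $r_n,\epsilon_n$ so that $\epsilon_n(R/r_n)^{d-2}\to\infty$, and conclude that each $u_n^1$ blows up at the finite focusing time $T_n\sim c_n^{-1}$, whence $\lim_{T\to\infty}\sup_{[0,T]}|u_n^1|=\infty$. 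Divergence-free and low-dimensional variants can be obtained by rotational superposition as in the template of \cite{loss}.

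\textbf{Plan for (b) and main obstacle.} For part (b), modify $V_n^2$ by truncating at an inner radius $\delta_n$ chosen so that the focused shell attains amplitude exactly $O(1)$; the solution $u_n^2$ then remains uniformly bounded, yet its support narrows to a ring of radial width $\sim \delta_n\to 0$, and the same barriers show the transition is sharp at scale $\delta_n$, so no common modulus of continuity can hold. The main obstacle throughout is making the barrier argument (i) rigorous for drifts $V_n$ that are merely $L^d$-integrable and not smooth. I would handle this by mollifying $V_n$, comparing $u_n$ against advected Barenblatt-type barriers for the smooth approximations, and passing to the limit via the uniform $L^\infty$ bound of Theorem~\ref{uniformb} together with $L^1$-stability. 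A secondary point is balancing the PME spreading rate $\sim u^{m-1}$ against the drift compression at each focusing stage; this is where the smallness of $\epsilon_n$ relative to $r_n$ enters, requiring careful quantitative choices.
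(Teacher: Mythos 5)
Your plan for (a) contains a step that cannot work as stated. For each fixed $n$ your drift $V^1_n=-c_n\psi_n(|x|)\hat x/|x|$, truncated at the inner radius $r_n$, is a \emph{bounded} vector field ($|V^1_n|\le c_n/r_n$), hence admissible in the sense of Definition~\ref{condV}; Theorem~\ref{uniformb} then gives a global-in-time $L^\infty$ bound for $u^1_n$, so the conclusion of step (ii) of your plan --- ``each $u^1_n$ blows up at the finite focusing time $T_n$'' --- is false for every $n$. Unboundedness can only appear through the family as $n\to\infty$, and that is exactly where your argument is silent: to make $\epsilon_n(R/r_n)^{d-2}\to\infty$ the density must pass through and beyond $O(1)$ values, where $u^{m-1}$ is no longer small and the porous-medium diffusion is not negligible, so the ``advect a Barenblatt-type profile along the characteristics $\dot r=-c_n/r$'' object is not a subsolution of the full equation in the regime that matters. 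The quantitative competition between compression and diffusion at moderate and large densities, which you defer to a ``secondary point,'' is the entire difficulty; in addition the advective amplification $(R/r)^{d-2}$ is empty in $d=2$, while the statement covers all $d\ge2$. The same criticism applies to (b): the claim that the solution ``attains amplitude exactly $O(1)$'' and has a sharp transition at scale $\delta_n$ requires both a lower and an upper barrier for the actual equation, and your proposed limiting procedure (mollify $V_n$, compare, pass to the limit) leans on a comparison principle that the paper only has for drifts with bounded $DV$ (Theorem~\ref{thm:comp}) and on uniqueness/stability that is not available for merely $L^d$ drifts.

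For contrast, the paper proves both parts without any dynamic barrier estimate at the critical integrability. It takes radial, increasing potential drifts $V=\nabla\Phi_A$ and invokes Theorem~\ref{asymtotic}: the solution converges uniformly as $t\to\infty$ to the explicit stationary profile $\bigl(C_A-\frac{m-1}{m}\Phi_A\bigr)_+^{1/(m-1)}$, where drift and degenerate diffusion balance exactly. For (a), the choice $\Phi_A\approx-\ln\ln(1/|x|)$ on $1/A\le|x|\le(\ln\ln A)^{-1}$ keeps $\|\nabla\Phi_A\|_{L^d}$ (even $L^d_{\log^q}$) uniformly bounded while forcing the stationary heights, hence $\sup_{t>0}u^A$, to diverge as $A\to\infty$ --- this is why the loss of bound in the theorem is phrased through $T\to\infty$ rather than finite-time blow-up. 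For (b), rescaled quadratic potentials make the bounded stationary profiles $\rho(Ax)$ concentrate at a point, killing any common modulus of continuity, and the uniform bound on $u^A$ is obtained by rescaling $v^A(x,t)=u^A(A^{-1}x,A^{-2}t)$ and applying Theorem~\ref{uniformb} to a drift that is uniformly bounded in $L^{d+1}$. Barrier constructions exploiting the degeneracy, of the kind your sketch would need, appear in the paper only for the divergence-free examples (Theorems~\ref{divfree2} and \ref{divfree}), and there they require carefully built explicit sub- and supersolutions; if you want to pursue your compression mechanism you would have to produce comparable explicit barriers and, in particular, prove the high-density pile-up against nondegenerate diffusion, which your current outline does not do.
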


\medskip

The sequence of drifts given in above theorem represents strongly compressive drifts concentrated near the origin. Thus one naturally asks whether the regularity of solutions are better with singular, but divergence-free drifts. It turns out that the critical norm for drifts stays the same for divergence-free drifts.

\begin{theorem} (Loss of regularity II)  {\rm [Theorem~\ref{divfree2} for $d=2$ and Theorem~\ref{divfree} for $d=3$]}\\
There is a sequence of vector fields $\{V_n\}_n$ that are uniformly bounded in $L^d(\R^d)$, and a sequence of uniformly smooth initial data $\{u_{n,0}\}_n$, such that the corresponding solutions $\{u_n\}$  of \eqref{main} are uniformly bounded in height but not bounded in any H\"{o}lder norm in a unit parabolic neighborhood. 
 \end{theorem}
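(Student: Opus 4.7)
The plan is to construct divergence-free drifts that create a controlled compression mechanism, and to exploit the degeneracy of $\Delta u^m$ at low densities to show that this compression is not prevented by the smoothing effect of the diffusion.

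For the construction of $V_n$, I would work in cylindrical or polar coordinates and choose a family of divergence-free vector fields approaching a critical singular profile. In dimension $d=2$ I would set $V_n = \nabla^\perp \psi_n$ for a sequence of stream functions $\psi_n$ whose oscillations concentrate near the origin on shrinking scales $r_n \to 0$, keeping $\|V_n\|_{L^2(\R^2)}$ uniformly bounded. In dimension $d=3$ I would use an axisymmetric field built from a vector potential concentrated in a thin torus around the axis, again keeping $\|V_n\|_{L^3(\R^3)}$ uniformly bounded. In either case the drift is arranged so that, inside a fixed ball, its flow compresses initial profiles transversely to a chosen direction at a geometric rate over the unit time interval, while the critical scaling of the $L^d$-norm just barely allows this compression to overcome the PME diffusion.

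Next, I would choose $u_{n,0}$ to be a smooth bump of bounded height supported in the compression region. Uniform boundedness of the solutions is essentially automatic: since $V_n$ is divergence-free, any constant is both a sub- and supersolution of \eqref{main}, so the standard comparison principle (applied through smooth approximations of $V_n$) yields $\|u_n(\cdot,t)\|_\infty \leq \|u_{n,0}\|_\infty$ for all $t$. The crux is then the construction of a nontrivial subsolution $\phi_n$ in the spirit of \cite{loss}, of the form $\phi_n(x,t) = F(\rho_n(x,t))$, where $\rho_n$ measures the signed distance to the compressing flow line of $V_n$ and $F$ is a PME-adapted one-dimensional profile (traveling-wave or Barenblatt-type). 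The subsolution inequality
\begin{equation*}
\partial_t \phi_n \leq \Delta \phi_n^m + \nabla\cdot(\phi_n V_n)
\end{equation*}
reduces to a pointwise estimate in which the drift contribution dominates the degenerate diffusion where $\phi_n$ is small, while both terms remain comparable in the bulk region.

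The main obstacle will be the sharp balance at $p=d$: at each dyadic scale $r$ near the concentration point, the rescaled drift has the same $L^d$-norm as $V_n$, and the rescaled PME produces a diffusion of the same order, so there is no slack to absorb error terms. I expect this to require a delicate choice of barrier respecting the critical self-similarity, possibly with a logarithmic correction to the natural scaling, and to involve genuinely different geometries in $d=2$ and $d=3$ (hence the split into Theorems~\ref{divfree2} and \ref{divfree}). Once the subsolution forces $u_n$ to have oscillation $\gtrsim 1$ across a thickness $\delta_n \to 0$ on a parabolic cylinder, the conclusion $\|u_n\|_{C^\alpha} \gtrsim \delta_n^{-\alpha} \to \infty$ for every $\alpha > 0$ follows immediately, establishing the asserted loss of any common Hölder modulus.
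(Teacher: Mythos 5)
There is a genuine gap at the final step: you claim that ``the subsolution forces $u_n$ to have oscillation $\gtrsim 1$ across a thickness $\delta_n\to 0$,'' but a subsolution only gives a one-sided (lower) bound at the points it touches; it cannot by itself produce any oscillation. To conclude a large H\"older quotient you must also show that the solution is \emph{small} at a nearby point, i.e.\ you need an upper barrier as well. This is precisely the crucial ingredient in the paper's argument: besides the travelling bump subsolution $\bar u=c_s z^s(t)\varphi$ on one side of the singular cone, one constructs a supersolution $\ud u=k(t)\Phi$ (Lemmas~\ref{6.2} and \ref{6.5}) whose \emph{zero set} moves with finite speed toward the origin on the other side of the cone --- a feature available only because of the degeneracy of the diffusion --- so that at time $T$ the solution is exactly $0$ at $(0,-4\epsilon)$ while the subsolution keeps it $\gtrsim \epsilon^{2s}$ at $(0,4\epsilon)$. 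Without this second barrier (or some substitute upper bound exploiting finite propagation), your scheme does not yield any lower bound on the oscillation, and uniform boundedness of $u_n$ is of no help since that bound is of order one.

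A second, related gap is that you acknowledge but do not resolve the critical-scaling obstruction at $p=d$. The paper's resolution is not a single critical profile with logarithmic corrections, but a family of drifts built from the near-critical homogeneous stream functions $|x\mp y|^s$ (and their three-dimensional analogues) with $s>0$, whose $(s-1)$-homogeneity is exactly what makes the barrier inequalities in Lemmas~\ref{sub2}, \ref{6.2}, \ref{sub}, \ref{6.5} close (the local minimum computation for $f$ uses $f_{xx}=2s>0$, etc.), while prefactors like $s^{1/2}$, $s^{1/3}$ keep $\|V\|_{L^d}$ uniformly bounded. The price is that for fixed $s$ one only destroys $C^\delta$ norms with $\delta>2s$ (the jump is of size $\epsilon^{2s}$ over distance $\epsilon$, not of size $1$), so the theorem is obtained along a diagonal sequence $s=1/n$, $\epsilon=\epsilon_n\to 0$, after rescaling in time via $M=s^{-3/2}$ (resp.\ $s^{-4/3}$) and invoking the comparison principle of Theorem~\ref{thm:comp} for the pressure equation \eqref{pressure2}. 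Your stronger claim of an order-one jump across a vanishing thickness would give loss of \emph{every} modulus of continuity for divergence-free drifts, which the paper explicitly leaves open; as stated, your proposal neither proves it nor falls back to the weaker, provable statement. (Your uniform $L^\infty$ bound via comparison with constants for divergence-free $V$ is fine, modulo smoothing $V_n$ so that Theorem~\ref{thm:comp} applies.)
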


We believe that above statement holds in general dimensions. To illustrate this point, we give examples both in dimensions two and three. The construction for both cases are similar, but the vector field we choose for three dimensions has more complex singularity structure than the other one. We  suspect that this is only due to increased technicality in computations.

\medskip

 The proof of above theorem is motivated by the corresponding result in \cite{loss}, where loss of continuity is shown for solutions of fractional diffusion with drift at critical regime. In contrast to \cite{loss} our example makes use of the degeneracy of diffusion in small density region, such as finite propagation properties or slow decay rate for the density heights: see section 5 for further discussions. For linear diffusion the corresponding loss of H\"{o}lder regularity results appear to be open, to the best of our knowledge. Let us mention that for linear diffusion with $L^1$-drifts, \cite{Sverak} shows the existence of discontinuous solutions for $d=3$, while in two dimensions time-dependent vector fields are needed to generate discontinuity in solutions (see \cite{loss}).

  \medskip

\bigskip

\textbf{Outline of the paper}

\medskip

Section 2 contains preliminary definitions and notations. 
Section 3 deals with  a priori estimate of solutions that yields existence and uniqueness of uniformly bounded weak solutions for $V\in L^p(\R^d)$ with $p>d$. In section 4 bounded solutions in the parabolic cylinder are shown to be H\"{o}lder continuous for drifts bounded in $L^{p}$ norms with $p>d+\frac{4}{d+2}$. The proof follows the strategy of DiBenedetto and Friedman \cite{dibenedetto1,dibenedetto2}.  The key idea there is to circumvent the low regularity near small densities to work with De Giorgi-Nash-Moser type iteration but with re-scaled cylinders, where its size depends on the oscillation of solutions, based on the scale invariance of the first two terms in \eqref{main} discussed above (see \eqref{alpha:rescale}). Our challenge when doing this is to carefully study how the singularity of the drift term affects the diffusion, especially in small density region. The original argument carries out without too much effort when $p>d+2$, and with more subtle arguments for $p> d+\frac{4}{d+2}$ (see Proposition~\ref{prop1} and Lemma~\ref{twoseq}).  Regularity property of solutions for $d<p\leq d+\frac{4}{d+2}$ stays open at the moment.
In section 5, we give several examples that illustrate the loss of regularity when the drifts are only bounded in $L^{d}(\mathbb{R}^d)$. We discuss potential vector fields as well as divergence free vector fields. 

\leavevmode
\medskip

\textbf{Acknowledgements.}
Both authors are partially supported by NSF grant DMS-1566578. We would like to thank Michael Hitrik, Kyungkeun Kang, Luis Silvestre and Monica Visan for helpful discussions and suggestions.


\medskip



\medskip

\section{Preliminaries and Notations}\label{defassump}

\begin{definition}\label{def1.1}
Let $u_0(x)\in L^\infty(\mathbb{R}^d)\cap L^1(\mathbb{R}^d)$ be non-negative. We say that a non-negative function $u(x,t):\mathbb{R}^d\times [0,T]\to[0,\infty)$ is a subsolution (resp. supersolution) to \eqref{main} if \begin{equation}\label{definitionsol}
\begin{aligned}
&\quad u\in C([0,T],L^1(\mathbb{R}^d))\cap L^\infty(\mathbb{R}^d\times[0,T]),\; \\
&uV\in L^2([0,T]\times\mathbb{R}^d)\;\text{ and }\; u^m\in L^2(0,T, \dot{H}^1(\mathbb{R}^d)).\end{aligned}
 \end{equation}
And for all non-negative test functions $\phi\in C_c^\infty(\mathbb{R}^d\times [0,T))$
\[\int_0^T\int_{\mathbb{R}^d} u\phi_tdxdt\geq (\text{resp. } \leq) \int_{\mathbb{R}^d} u_0(x)\phi(0,x)dx+\int_0^T\int_{\mathbb{R}^d}(\nabla u^m+uV)\nabla\phi\; dxdt.\]

We say $u$ is a weak solution to \eqref{main} if it is both sub- and supersolution of \eqref{main}, or equivalently, 
it satisfies for all test function $\phi\in C^\infty_c( \mathbb{R}^d\times [0,T))$, 
\[\int_0^T\int_{\mathbb{R}^d} u\phi_tdxdt=\int_{\mathbb{R}^d} u_0(x)\phi(0,x)dx+\int_0^T\int_{\mathbb{R}^d}(\nabla u^m+uV)\nabla\phi\; dxdt.\]
\end{definition}


\begin{definition}\label{condV}We say an integrable vector vector field $V:\R^d\to \R^d$ is {\it admissable} if $V=V_1+V_2$ where
$$\|V_1 \|_{\infty} +\|V_2\|_p<\infty \text{ for some $p>d$}.
$$
\end{definition}

\begin{definition}\label{logq}
We say $V:\R^d\to \R^d$ is bounded in $L^{p}_{\log^q}(\R^d)$ for $p,q>0$ if
\[\left\|V\right\|^p_{L^{p}_{\log^q}}:=\int_{\mathbb{R}^d}|V|^p\max\{\log^q |V|,1\}dx<\infty.\]
\end{definition}

\medskip

The following will be used to obtain regularity estimates in section 3.

\begin{lemma}\label{gagliardo}
(Gagliardo-Nirenberg Interpolation Inequality)\quad For any $u$ such that $u\in L^q(\mathbb{R}^d)$ and $\nabla u\in L^r(\mathbb{R}^d)$, there exists a constant $C(\alpha,r,q,s)$ that
\[\left\||\nabla|^s u\right\|_p\leq C\left\|\nabla u\right\|_r^\alpha\left\|u\right\|_q^{1-\alpha}\]
where we require
\begin{equation}\label{condition1}
\frac{1}{p}=\frac{s}{d}+\left(\frac{1}{r}-\frac{1}{d}\right)\alpha+\frac{1-\alpha}{q} \end{equation}
\begin{equation}\label{condition2}
\text{ and }s\leq\alpha<1, \, r\geq 1, \, q\geq 1.
\end{equation}
For functions $u:B_R\to\mathbb{R}$, the interpolation inequality has the same hypotheses as above and reads
\[\left\||\nabla|^s u\right\|_p\leq C_1\left\|\nabla u\right\|_r^\alpha\left\|u\right\|_q^{1-\alpha}+C_2\|u\|_1.\]
where the constants $C_1, C_2$ are independent of $R$ for all $R$ large enough.
\end{lemma}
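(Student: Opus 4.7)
The plan is to establish the whole-space estimate first and then transfer it to $B_R$ by a controlled extension argument. As a preliminary sanity check, the exponent relation \eqref{condition1} is forced by scale invariance: substituting $u(x)\mapsto u(\lambda x)$ must preserve the inequality in $\lambda$, which pins down the exponents uniquely, and this guides the matching of exponents at every intermediate step.

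For the whole-space version, I would first dispatch the integer case $s=0$. The classical Sobolev embedding yields $\|u\|_{p^{*}}\leq C\|\nabla u\|_{r}$ with $\tfrac{1}{p^{*}}=\tfrac{1}{r}-\tfrac{1}{d}$, and a H\"older interpolation
\[
\|u\|_{p} \leq \|u\|_{p^{*}}^{\alpha}\|u\|_{q}^{1-\alpha}, \qquad \tfrac{1}{p}=\tfrac{\alpha}{p^{*}}+\tfrac{1-\alpha}{q},
\]
reproduces \eqref{condition1} when $s=0$. For $s\in(0,1)$ I would use the Riesz potential representation $|\nabla|^{s}u = c_{d}\,I_{1-s}(\nabla u)$ and apply Hardy--Littlewood--Sobolev to convert the $L^{r}$-bound on $\nabla u$ into an $L^{\tilde p}$-bound on $|\nabla|^{s}u$ with $\tfrac{1}{\tilde p}=\tfrac{1}{r}-\tfrac{1-s}{d}$, followed by a second H\"older interpolation against $\|u\|_{q}$ to land on the target exponent $p$. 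The constraints $s\leq\alpha<1$ and $r,q\geq 1$ in \eqref{condition2} are precisely what keep each intermediate auxiliary exponent inside the strong-type regime where HLS and Sobolev are applicable.

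For the bounded-domain statement on $B_{R}$, I would combine a standard extension operator $E:W^{1,r}(B_{R})\to W^{1,r}(\mathbb{R}^{d})$ with a smooth cutoff $\eta_{R}$ equal to $1$ on $B_{R}$ and supported in $B_{2R}$, then apply the whole-space estimate to $\eta_{R}Eu$. This produces the claimed inequality on $B_{R}$, with the extra $\|u\|_{1}$ term absorbing the lower-order Poincar\'e-type correction created by the cutoff (which is not homogeneous under the GN scaling and hence cannot be absorbed into the $\|\nabla u\|_{r}^{\alpha}\|u\|_{q}^{1-\alpha}$ term). Choosing $\eta_{R}$ so that $|\nabla\eta_{R}|\lesssim 1$ uniformly in $R$ makes $C_{1},C_{2}$ independent of $R$ for all sufficiently large $R$.

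The main obstacle I anticipate is controlling endpoint behavior. Hardy--Littlewood--Sobolev degenerates at $r=1$ and at the Sobolev endpoint, so one must verify directly that \eqref{condition1}--\eqref{condition2} rule out these degenerate configurations; concretely, one checks that $s\leq\alpha<1$ together with the exponent identity keeps both $\tilde p$ and $p^{*}$ strictly finite and bounded away from the bad values. A secondary subtlety, as noted above, is that the $\|u\|_{1}$ correction on $B_{R}$ is not scale-homogeneous, so the uniformity of $C_{1},C_{2}$ in $R$ has to be argued by a direct estimate on the cutoff rather than by rescaling.
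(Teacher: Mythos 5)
First, a point of comparison: the paper does not prove this lemma at all --- it is quoted as a known result and the proof is delegated to \cite{nirenberg} --- so your sketch is not competing with an in-paper argument. Judged on its own terms, the $s=0$ part (Sobolev embedding plus log-convexity of $L^p$ norms) and the scaling check are fine, but the fractional step contains a genuine gap. After Hardy--Littlewood--Sobolev (note also that $|\nabla|^{s}u=I_{1-s}\bigl(\textstyle\sum_j R_j\partial_j u\bigr)$ involves Riesz transforms, not literally $I_{1-s}(\nabla u)$, so $L^r$-boundedness of $R_j$ with $1<r<\infty$ is already being used) you obtain $\bigl\||\nabla|^{s}u\bigr\|_{\tilde p}\lesssim\|\nabla u\|_{r}$, which is the $\alpha=1$ endpoint. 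Your ``second H\"older interpolation against $\|u\|_q$'' is not a legitimate step: interpolation of Lebesgue norms bounds $\||\nabla|^{s}u\|_{p}$ by norms of the \emph{same} function $|\nabla|^{s}u$, and the endpoint you would need, $\||\nabla|^{s}u\|_{q}\lesssim\|u\|_{q}$, is false since $|\nabla|^{s}$ is unbounded on $L^{q}$. To pass from a norm of $|\nabla|^s u$ to a norm of $u$ one needs an actual mechanism, e.g.\ the Hedberg-type pointwise bound $\bigl||\nabla|^{s}u(x)\bigr|\lesssim \bigl(M|\nabla u|(x)\bigr)^{s}\bigl(Mu(x)\bigr)^{1-s}$, obtained by splitting the kernel of $(-\Delta)^{s/2}$ at a scale $\rho$ and optimizing in $\rho$; with H\"older and maximal-function bounds this gives exactly the case $\alpha=s$ of \eqref{condition1}, and composing it with the $s=0$ case covers all $s\le\alpha<1$. (Littlewood--Paley or interpolation-space arguments are alternatives.) Relatedly, your claim that \eqref{condition1}--\eqref{condition2} ``rule out'' the degenerate configurations is not correct: $r=1$ and $q=1$ are explicitly allowed by \eqref{condition2}, and both HLS and the Riesz transforms fail on $L^{1}$, so the endpoint cases require the direct kernel/maximal-function or difference-quotient approach rather than HLS.

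For the statement on $B_R$, the extension-plus-cutoff reduction is the standard route and is fine in outline, but the cutoff and extension produce lower-order terms of the type $\|Eu\|_{L^{r}(B_{2R})}$, not $\|u\|_{1}$; reaching the stated form requires interpolating these terms back against $\|\nabla u\|_{r}$ and $\|u\|_{1}$ with constants uniform in $R$ (and checking that the extension operator's norm can be taken uniform in $R$, e.g.\ by extending from $B_1$ and rescaling). That part is repairable bookkeeping; the missing interpolation mechanism in the fractional step above is the real gap.
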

We refer readers to \cite{nirenberg} for the proof.



\medskip



\medskip

\textbf{Notations.}

\medskip

$\circ$ Given $S\subset \mathbb{R}^d$ (or $\mathbb{R}^{d+1}$) measurable, we write $|S|$ to be the Lebesgue measure of $S$ in $ \mathbb{R}^d$ (or $\mathbb{R}^{d+1}$). We write $B_r(x) \subset\mathbb{R}^d$ as a ball centered at $x$ with radius $r$, and  denote $B_r=B_r(0)$.
\medskip

$\circ$ For simplicity we denote
$$\|\cdot\|_p:=\|\cdot\|_{L^p(\R^d)}\hbox{ and }\|\cdot\|_\alpha:= \|\cdot\|_{C^{\alpha}(\R^d)} \text{ and }
$$
\[ osc_S(u):=\sup_{x\in S}u-\inf_{x\in S}u,\]
for any measurable function $u:\mathbb{R}^d \to \R$ and  $S\subset \mathbb{R}^d$. 
\medskip

$\circ$ The scaled parabolic cylinders are denoted by
\begin{equation}\label{Qrc}Q(r,c):=\{x,|x|<r\}\times (-c{r^2},0) \hbox{ for } r,c>0. \end{equation}
The standard parabolic cylinder is denoted by $Q_r:=Q(r,1)$.
\medskip

\medskip

$\circ$ Throughout this paper, the constant $C$ represents  {\it universal constants}, by which we mean various constants that only depends on $m,d$ and $L^1,L^\infty$ norms of the initial data $u_0$. In addition, $C$  may also depend on $\|V\|_p$ or $\|V\|_{L^p_{loc}}$ with $p$ given in the statement of the Theorem.  We may write $C(A)$ or $C_A$ to emphasize the dependence of $C$ on $A$.

\medskip

$\circ$ We write 
$A\lesssim B$
if $A\leq CB$ for some universal constant $C$. When we write $A\lesssim_D B$, we mean $A\leq CB$ where $C$ depends on universal constants and $D$ (with particular emphasis on the dependence of $D$). By $A\sim B$, we mean both $A\lesssim B$ and $B\lesssim A$.

\medskip



\section{Priori Estimates}

In this section several a priori estimates are obtained for solutions for \eqref{main}. 

\medskip

Let $V$ be an admissible vector field given in Definition \ref{condV}.
For any $\e>0$, consider smooth vector fields $\{V_1^\epsilon,V_2^\epsilon\}$ such that, as $\e\to 0$, $V_1^\epsilon$ converges to $V_1$ in $L^\infty(\R^d)$ and $V_2^\epsilon$ converges to $V_2$ in $L^p(\R^d)$. Denote $V^\epsilon:=V_1^\epsilon +V_2^\epsilon$ and
 \[\varphi_\epsilon(x):=x^m+\epsilon x.\]
For some large $r>0$, we consider $u_{\epsilon,r}$ which solves the following problem:
\begin{equation}\label{approxeqn}
\left\{\begin{aligned}
&\frac{\partial}{\partial t}u_{\epsilon,r}=\Delta \varphi_\epsilon(u_{\epsilon,r})+\nabla\cdot (u_{\epsilon,r} V^\epsilon)= 0 &\text{ in }B_r\times [0,T], \\
&(\nabla \varphi_\epsilon(u_{\epsilon,R})+ u_{\epsilon,r} V^\epsilon)\cdot\nu=0 &\text{ on }(\partial B_r )\times [0,T],\\
& u_\epsilon(x,0)=u_{0}(x) &\text{ on } B_r
\end{aligned}
\right.
\end{equation}
where $\nu$ denotes the outward unit normal on $\partial B_r$. Note that \eqref{approxeqn} is a uniformly parabolic quasi-linear equation with smooth coefficients, and thus $u_{\epsilon,r}$ exists and is smooth.

In the following theorem, we are going to prove that $u_{\epsilon,r}$ are uniformly bounded independent of $\epsilon$ and $r$. We use a refined iteration method of Lemma 5.1 \cite{preventing}. 

\begin{theorem}\label{uniformb}
Let $u=u_{\epsilon,r}$ solves \eqref{approxeqn} with initial data $u_0\in L^1(\mathbb{R}^d)\cap L^\infty(\mathbb{R}^d)$ and admissible vector fields $V^\epsilon=V^\epsilon_1+V^\epsilon_2$. Then $u(x,t)$ is uniformly bounded for all $(x,t)\in \mathbb{R}^d\times [0,\infty)$. The bound only depends on $m,p,\|V^\epsilon_1\|_\infty,\|V^\epsilon_2\|_{p},\|u_0\|_1$ and $\|u_0\|_\infty$.
\end{theorem}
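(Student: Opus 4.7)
The plan is to derive uniform $L^q$ bounds for $u=u_{\epsilon,r}$ for every $q\in[1,\infty)$ and then to Moser-iterate in order to pass to $L^\infty$. Throughout, the no-flux boundary condition in \eqref{approxeqn} yields mass conservation $\|u(\cdot,t)\|_1\leq\|u_0\|_1$, which anchors the iteration at $q=1$.

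First I would test the equation with $u^{q-1}$ for $q\geq 2$, integrate by parts (the boundary flux vanishes by \eqref{approxeqn}), and use the identity $u^{q-1}\nabla u=\tfrac{2}{m+q-1}u^{(q-m+1)/2}\nabla u^{(m+q-1)/2}$ together with Cauchy--Schwarz and Young's inequality on the drift term to arrive at
\[
\frac{1}{q}\frac{d}{dt}\|u\|_q^q+\frac{2m(q-1)}{(m+q-1)^2}\|\nabla w\|_2^2\lesssim \int|V^\epsilon|^2 u^{q-m+1},
\]
where $w:=u^{(m+q-1)/2}$. The bounded part $V_1^\epsilon$ contributes an $L^1$--$L^q$ interpolable term that is manageable. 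For the $L^p$ part $V_2^\epsilon$, Hölder reduces the right side to $\|V_2^\epsilon\|_p^2\|w\|_{\gamma}^{\beta}$ with $\beta:=\tfrac{2(q-m+1)}{m+q-1}$ and $\gamma:=\tfrac{\beta p}{p-2}$.

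Next I would apply the Gagliardo--Nirenberg inequality (Lemma~\ref{gagliardo}) with anchor exponent chosen so that $\|w\|_{s_\ast}=\|u\|_q^{(m+q-1)/2}$, producing $\|w\|_\gamma^\beta\leq C\|\nabla w\|_2^{\alpha\beta}\|u\|_q^{(1-\alpha)(m+q-1)\beta/2}$, where $\alpha$ solves the scaling relation \eqref{condition1}. A direct computation shows $\alpha\to d/p$ and $\beta\to 2$ as $q\to\infty$, so $\alpha\beta\to 2d/p<2$ precisely because $p>d$. This positive margin lets Young's inequality absorb the $\|\nabla w\|_2$ factor into the dissipation, yielding a closed differential inequality of the form $\frac{d}{dt}\|u\|_q^q\leq C_q\|u\|_q^{q\theta_q}+C_q'\|u\|_1^{\sigma_q}$ with $\theta_q<1$.

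Integrating in time and iterating this inequality along a geometric sequence $q_k\nearrow\infty$ in the spirit of Lemma~5.1 of \cite{preventing}, with the $\|u\|_1$ mass conservation anchoring the $k=0$ step, yields the uniform bound. The main obstacle is controlling the constants $C_q,C_q'$ through the Moser iteration so that they do not blow up as $q_k\to\infty$: the dissipation coefficient $\frac{2m(q-1)}{(m+q-1)^2}$ degenerates like $1/q$, while the drift exponent $\alpha\beta$ tends to its critical value $2d/p$. The refinement of the standard iteration consists precisely in tracking these competing rates and exploiting the strict inequality $p>d$ so that the geometric series of constants sums to a finite quantity depending only on $m,d,p,\|V_1^\epsilon\|_\infty,\|V_2^\epsilon\|_p,\|u_0\|_1$ and $\|u_0\|_\infty$, uniformly in $\epsilon$ and $r$.
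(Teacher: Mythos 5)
Your opening energy estimate and the use of H\"older, Gagliardo--Nirenberg and Young's inequality with the margin $p>d$ are in the same spirit as the paper's proof (which tests with powers of the truncation $(u-1)_+$ and then iterates a refined version of Lemma 5.1 of \cite{preventing}). The genuine gap is in the step where you pass to $L^\infty$. Because you anchor the Gagliardo--Nirenberg interpolation at $\|u\|_q$ itself (choosing $s_*$ with $\|w\|_{s_*}=\|u\|_q^{(m+q-1)/2}$), the closed inequality $\frac{d}{dt}\|u\|_q^q\leq C_q\|u\|_q^{q\theta_q}+C_q'\|u\|_1^{\sigma_q}$ involves only $\|u\|_q$ and $\|u\|_1$: there is no relation between consecutive norms $\|u\|_{q_{k}}$ and $\|u\|_{q_{k-1}}$, so there is in fact nothing to ``iterate along a geometric sequence,'' and the constants cannot be tamed by the usual Moser mechanism of taking $q_k$-th roots. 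Solving your inequality for each fixed $q$ instead, one computes that although $\theta_q<1$, the gain satisfies $q(1-\theta_q)\to\frac{(m-1)(p+d)}{p-d}$, a finite constant, while $C_q$ grows polynomially in $q$ (a factor of order $q^2$ from the drift term, raised to the Young exponent $\frac{2}{2-\alpha\beta}\to\frac{p}{p-d}$ after absorption into a dissipation whose coefficient is only of order one). Hence the resulting bound on $\|u\|_q$ is of size $C_q^{1/(q(1-\theta_q))}$, which diverges like a positive power of $q$, and no $L^\infty$ bound follows. The paper avoids exactly this by anchoring the interpolation at the half-power norm, $\|u_1^{n/2}\|_1=\|u_1\|_{n/2}^{n/2}$ with $n_k\sim 2^k$, so that the inequality takes the form $\frac{d}{dt}A_k+C_0A_k\leq C^{n_k}+C^kA_{k-1}^{2+O(n_k^{-1})}$, and the $k$-dependent constants are washed out when taking $n_k$-th roots (Lemma~\ref{iteration}).

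Two further points. First, the theorem asserts a bound uniform on $[0,\infty)$, but your differential inequality has no damping term; an inequality of the form $Y'\leq C_qY^{\theta_q}+C_q'$ with $\theta_q<1$ only yields bounds growing in time, roughly like $t^{1/(1-\theta_q)}$. The paper produces the damping $+c_0\int u_1^n$ on the left by reserving part of the dissipation and invoking a second Gagliardo--Nirenberg inequality, \eqref{noninter3}; your argument needs an analogue of this. Second, for $q<m-1$ the exponent $q-m+1$ in $\int|V^\epsilon|^2u^{q-m+1}$ is negative and this term is not controlled near the degenerate set $\{u=0\}$ when $V_2^\epsilon$ is merely in $L^p$; the paper's truncation $u_1=(u-1)_+$, combined with the mass normalization forcing $|\{u\geq1\}|\leq1$, removes both this problem and the degeneracy of the diffusion at small densities, and some such device (or a restriction to large exponents with a separate low-exponent step) is needed to make your energy estimates legitimate.
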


\begin{proof}
Without loss of generality, let us suppose that the total mass of $u_0$ is $1$ and so is the total mass of $u(\cdot,t)$ by the equation. Let us omit the script $\epsilon$ on $V^\epsilon$ and simply write $V=V_1+V_2$.

Denote $u_1:=\max\{(u-1),0\}$. Since $u$ is smooth, we multiply $u_1^{n-1}$ on both sides of \eqref{approxeqn} and find
\[\partial_t\int_{B_r} u_1^n dx=n
\int_{B_r} u_tu_1^{n-1}dx\leq
-mn\int_{B_r} u^{m-1}\nabla u \nabla u_1^{n-1}dx-n\int_{B_r} V u\nabla u_1^{n-1}dx.
\]
Since in the region where $\nabla u_1\ne 0$, $u\geq 1$, the above
\[
\leq
- c_m\int_{B_r}\left|\nabla u_1^{\frac{n}{2}}\right|^2dx- 2(n-1){\int_{B_r} Vuu_1^{\frac{n}{2}-1}\nabla u_1^{\frac{n}{2}}dx}.
\]
We have for any $\delta>0$,
\[n\left|\int_{B_r} Vuu_1^{\frac{n}{2}-1}\nabla u_1^{\frac{n}{2}}dx\right| \leq \delta \int_{B_r}\left|\nabla u_1^{\frac{n}{2}}\right|^2dx
+C{n^2}\int_{\mathbb{R}^d\cap\{u\geq 1\}} \left|Vuu_1^{\frac{n}{2}-1}\right|^2dx.\]
Later we will fix a $\delta$ small enough such that the sum of the positive coefficients in front of $\int_{B_r}|\nabla u_1^{\frac{n}{2}}|^2dx$ terms are bounded by $c_m$.
The above shows
\begin{equation}\label{noninter1}
\partial_t\int_{B_r} u_1^n dx\lesssim - \int_{B_r}\left|\nabla u_1^{\frac{n}{2}}\right|^2dx+ n^2\underbrace{\int_{\{u\geq 1\}} \left|Vuu_1^{\frac{n}{2}-1}\right|^2dx}_{X_n:=}
\end{equation}
where the constant in $``\lesssim"$ depends only on $m,\delta$. Next
\[X_n\lesssim \underbrace{\int_{\{u\geq 1\}} \left|V_1(1+u_1)u_1^{\frac{n}{2}-1}\right|^2dx}_{X_{n1}:=}+\underbrace{\int_{\{u\geq 1\}} \left|V_2(1+u_1)u_1^{\frac{n}{2}-1}\right|^2dx}_{X_{n2}}\]
and
\[X_{n1}\lesssim \int_{\{u\geq 1\}} \left|u_1^{n-1}+u_1^{n}\right|^2dx.\]
By H\"{o}lder's inequality,
$$X_{n2}\lesssim \l \int_{\{u\geq 1\}} V_2^{2q_1}dx\r^{\frac{1}{q_1}}\l\int_{\{u\geq 1\}} u_1^{nq_2}+u_1^{(n-2)q_2}dx\r^{\frac{1}{q_2}}\lesssim \l\int_{\{u\geq 1\}} u_1^{nq_2}+u_1^{(n-2)q_2}dx\r^{\frac{1}{q_2}}$$
where $q_1=\frac{p}{2},\frac{1}{q_1}+\frac{1}{q_2}=1$. By the condition
\begin{equation}\label{2dcond}
1>\frac{1}{q_2}>1-\frac{2}{d}.
\end{equation}
Because $u$ has total mass $1$, the total volume of the set $\{u\geq 1\}$ is bounded by $1$.
So $X_{n1}\lesssim X_{n2}$ and we have
$$X_n \lesssim \l\int_{\{u\geq 1\}} u_1^{nq_2}+u_1^{(n-2)q_2}dx\r^{\frac{1}{q_2}}\lesssim\l
\int_{\{u\geq 1\}} u_1^{nq_2}+1dx\r^{\frac{1}{q_2}}
\lesssim
\left\|u_1^\frac{n}{2}\right\|^2_{2q_2}+1
$$

By Gagliardo-Nirenberg inequality,
\[\left\|u_1^\frac{n}{2}\right\|_{2q_2}\leq C_1\left\|\nabla u_1^\frac{n}{2}\right\|^\gamma_2\left\|u_1^\frac{n}{2}\right\|_1^{1-\gamma}+C_1\left\|u_1^\frac{n}{2}\right\|_1\]
where
$\frac{1}{2 q_2}=\l\frac{1}{2}-\frac{1}{d}\r\gamma+(1-\gamma)$
and
\[\gamma=\l1-\frac{1}{2q_2}\r\Big/\l\frac{1}{2}+\frac{1}{d},\r\]
which belongs to $(0,1)$ due to \eqref{2dcond}, and $C_1$ only depends on $p$.
 By Young's inequality
\begin{equation}\label{noninter2}
X_n\leq \frac{\delta}{n^2}\left\|\nabla u_1^\frac{n}{2}\right\|^{2}+C_\delta n^{c_\gamma}\l\int u_1^\frac{n}{2}dx\r^2+C
\end{equation}
with $c_\gamma=\frac{2\gamma}{1-\gamma}$.

Again using Galiardo-Nirenberg inequality and Young's inequality it follows 
\[\left\|u_1^\frac{n}{2}\right\|_{2}\lesssim \left\|\nabla u_1^\frac{n}{2}\right\|^\beta_2\left\|u_1^\frac{n}{2}\right\|_1^{1-\beta}+\left\|u_1^\frac{n}{2}\right\|_1\lesssim
\left\|\nabla u_1^\frac{n}{2}\right\|_2+\left\|u_1^\frac{n}{2}\right\|_1 \]
with $\beta=\frac{1}{2}/\l\frac{1}{2}+\frac{1}{d}\r$. So for some universal $C,c>0$
\begin{equation}\label{noninter3}
\left\|\nabla u_1^\frac{n}{2}\right\|^2_2\geq C\left\|u_1^\frac{n}{2}\right\|_2^2-c\left\|u_1^\frac{n}{2}\right\|_{1}^2.
\end{equation}

From \eqref{noninter1}, \eqref{noninter2} and \eqref{noninter3}, we have
\[\partial_t \int_{B_r} u_1^n +c_0 \int_{B_r} u_1^ndx \leq C n^{c_\gamma+2}\l\int_{B_r} u_1^\frac{n}{2}dx\r^2+Cn^2.
\]
Now let $n_k=2^k$ for $k=0,1,2...$ and $A_k(t)=\int u_1^{n_k}(x,t)dx$.  To conclude the proof we need the following lemma, whose proof will be given in the appendix.

\begin{lemma}\label{iteration}
Suppose $\{n_k\}$ is a sequence defined by 
\begin{equation}\label{sequence}
    n_0=1,\quad n_{k+1}:=2  n_k+a \text{ for all }k\geq 0, \hbox{ where } a>-1.
\end{equation}

Let $\{A_k(\cdot),k =0,1,...\}$ be a sequence of differentiable, positive functions on $[0,\infty)$ that satisfies 
\[\frac{d}{dt}A_k+C_0 A_k\leq C_1^{n_k}+{C_1}^k (A_{k-1})^{2+{C_1}n_k^{-1}},\]
for some constants $C_0,{C_1}$. Then $\{B_k(t):=A_k^{(n_k^{-1})}(t)\}$ are uniformly bounded for all $t> 0$ and $k$, given that $\{B_k(0)\}$ with respect to $k$ and $\{B_0(t)\}$ are uniformly bounded with respect to $t>0$.
\end{lemma}

\medskip

From above lemma, $A_k^{n_k^{-1}}$ are uniformly bounded. We have that $\|u^n_1(\cdot,t)\|_n$ are uniformly bounded for all $t$ and $n\in\{2^k,k=0,1,2...\}$. By interpolation, this shows that $\|u_1\|_p< \infty $ for $1\leq p \leq \infty$. Since
\[\int_{B_r} u^n dx\leq \int_{\{u\geq 2\}} (2u_1)^n dx+2^{n-1}\int_{\{u\leq 2\}} u\; dx\lesssim 2^n,\]
we find the $L^\infty$ bound of $u$ which is independent of $r,\epsilon$.
\end{proof}

\subsection{Existence}\label{wellp}

In this section, we show existence  of solutions to \eqref{main} with $V\in L^\infty(\R^d)+L^p(\R^d)$ for some $p>d$. 
\begin{theorem}\label{thm2.4}
Assume $V$ is admissible.
Then there exists a weak solution $u$ to \eqref{main} with nonnegative initial data $u_0 \in L^\infty(\mathbb{R}^d)\cap L^1(\mathbb{R}^d)$.
\end{theorem}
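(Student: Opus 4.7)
My plan is to construct $u$ as a limit of the smooth approximations $u_{\epsilon,r}$ solving \eqref{approxeqn}. Theorem~\ref{uniformb} already delivers a uniform $L^{\infty}(\R^d\times[0,\infty))$ bound on $u_{\epsilon,r}$ that is independent of $\epsilon$ and $r$. What remains is to upgrade this to an energy estimate together with a time regularity estimate, apply an Aubin--Lions type compactness argument, and then identify each term in the weak formulation in the limit $\epsilon\to 0$, $r\to\infty$.

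\textbf{A priori estimates.} Integrating \eqref{approxeqn} against $1$ and using the no-flux boundary condition yields mass conservation $\|u_{\epsilon,r}(\cdot,t)\|_1=\|u_0\|_1$. Testing \eqref{approxeqn} against $\varphi_\epsilon(u_{\epsilon,r})$ and setting $\Phi_\epsilon(s):=\int_0^s\varphi_\epsilon(\tau)\,d\tau$ produces
\begin{equation*}
\frac{d}{dt}\int_{B_r}\Phi_\epsilon(u_{\epsilon,r})\,dx+\int_{B_r}|\nabla\varphi_\epsilon(u_{\epsilon,r})|^2\,dx=-\int_{B_r}u_{\epsilon,r}V^\epsilon\cdot\nabla\varphi_\epsilon(u_{\epsilon,r})\,dx.
\end{equation*}
The drift term is absorbed by Cauchy--Schwarz and Young's inequality after splitting $V^\epsilon=V_1^\epsilon+V_2^\epsilon$: the first piece satisfies $\|u_{\epsilon,r}V_1^\epsilon\|_2\le\|V_1^\epsilon\|_\infty\|u_{\epsilon,r}\|_2$, while H\"older gives $\|u_{\epsilon,r}V_2^\epsilon\|_2\le\|u_{\epsilon,r}\|_{2p/(p-2)}\|V_2^\epsilon\|_p$, with $\|u_{\epsilon,r}\|_q$ for all $1\le q\le\infty$ uniformly bounded by interpolating between the $L^1$ mass and the $L^\infty$ bound of Theorem~\ref{uniformb}. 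This produces a uniform $L^2([0,T]\times\R^d)$ bound on $\nabla u_{\epsilon,r}^m$. Reading off \eqref{approxeqn} then bounds $\partial_t u_{\epsilon,r}$ uniformly in $L^2([0,T];H^{-1}_{loc}(\R^d))$.

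\textbf{Compactness and passage to the limit.} By Aubin--Lions, $\{u_{\epsilon,r}\}$ is precompact in $L^2_{loc}(\R^d\times[0,T])$. Taking a diagonal sequence $u_n:=u_{\epsilon_n,r_n}$ with $\epsilon_n\to 0$ and $r_n\to\infty$, I extract a limit $u$ satisfying $u_n\to u$ a.e.\ and in $L^1_{loc}$, $\nabla u_n^m\rightharpoonup \nabla u^m$ weakly in $L^2_{loc}$, and $u_n V^{\epsilon_n}\to uV$ in $L^2([0,T]\times\R^d)$ (using the admissibility splitting together with $V^{\epsilon_n}\to V$ in $L^\infty+L^p$, pointwise convergence, and a Vitali-type argument). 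Plugging these convergences into the weak formulation of \eqref{approxeqn} yields the weak equation for $u$, and the integrability conditions \eqref{definitionsol} follow from the above estimates; the regularity $u\in C([0,T];L^1(\R^d))$ comes from the $H^{-1}_{loc}$ time bound together with uniform $L^\infty\cap L^1$ control.

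\textbf{Main obstacle.} The most delicate step is the strong $L^2([0,T]\times\R^d)$ convergence of the drift $u_n V^{\epsilon_n}\to uV$, as required by Definition~\ref{def1.1}. The $L^\infty$ piece is standard via dominated convergence using the uniform $L^2$ bound on $u_n$. The $L^p$ piece relies on the uniform $L^{2p/(p-2)}$ bound of $u_n$ (from interpolation between the conserved $L^1$ mass and the $L^\infty$ bound) together with equi-integrable tails at infinity, coming from $\int_{|x|>R}|V_2|^p\to 0$ and the $L^1$-control on $u_n$; this is precisely the point at which the admissibility condition $p>d\ge 2$ is invoked beyond the $L^\infty$ estimate.
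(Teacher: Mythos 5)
Your proposal is correct and follows essentially the same route as the paper: approximate by \eqref{approxeqn}, use the uniform bounds of Theorem~\ref{uniformb}, test with $\varphi_\epsilon(u_{\epsilon,r})$ to control $\nabla u_{\epsilon,r}^m$ and the drift term via the admissible splitting, then pass to the limit in $\epsilon$ and $r$ (the paper delegates the compactness and limit identification to \cite{bedrossian2011}). The only minor caveat is that the classical Aubin--Lions lemma should be applied to $u_{\epsilon,r}^m$ (whose spatial gradient, not that of $u_{\epsilon,r}$, is uniformly bounded in $L^2$), combined with the uniform $L^\infty$ bound and the strict monotonicity of $s\mapsto s^m$ to transfer the strong convergence back to $u_{\epsilon,r}$, which is exactly the variant used in the cited reference.
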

\begin{proof}
The proof is parallel to the previous works \cite{density, bertozzi2009, bedrossian2011}. Recall that $u_{\epsilon,r}$ solve \eqref{approxeqn}. Theorem \ref{uniformb} states that for all $t\in[0,T]$, $\{u_{\epsilon,r}\}$ are uniformly bounded in $L^1(B_r)\cap L^\infty (B_r)$ independent of  $\epsilon,r$. 

\medskip

Using $\varphi_\epsilon(u_{\epsilon,r})$ as the test function in \eqref{approxeqn}, we obtain
\[\left(\int_{B_r}\frac{1}{m+1}u_{\epsilon,r}^{m+1}+\frac{\epsilon}{2}u^2 dx\right)\Bigg|^{T}_{0}=-\iint_{B_r\times[0,T]}|\nabla \varphi_\epsilon(u_{\epsilon,r})|^2dxdt-\iint_{B_r\times[0,T]} u_{\epsilon,r} V^\epsilon \cdot \nabla \varphi_\epsilon(u_{\epsilon,r})dxdt. \]
From H\"{o}lder and Young's inequality
\begin{equation}\label{Aepsilon}
\iint_{B_r\times[0,T]}|\nabla \varphi_\epsilon(u_{\epsilon,r})|^2dxdt\leq C+\iint_{B_r\times[0,T]} u_{\epsilon,r}^2 |V^\epsilon|^2dxdt
\end{equation}
Let $q$ be such that $\frac{2}{q}+\frac{2}{p}=1$. Then 
$$
\left\|u_{\epsilon,r} V^\epsilon\right\|^2_{L^2(B_r\times [0,T])}\leq \left\|u_{\epsilon,r} V_1^\epsilon\right\|^2_{L^2(B_r\times [0,T])}
+2 
\left\|u_{\epsilon,r} \right\|^2_{L^q(B_r\times [0,T])}
\left\|V_2^\epsilon \right\|^2_{L^p(B_r\times [0,T])}.
$$
The two terms on the right hand side are uniformly bounded with respect to $\e$ and $r$, since $\{u_{\epsilon,r}\}$ are uniformly bounded in $L^\infty(B_R)\cap L^1(B_R)$.

By \eqref{Aepsilon}, $\{\nabla \varphi_\epsilon(u_{\epsilon,r})\}$ are uniformly bounded in $L^2(B_r\times [0,T])$.  As in Theorem 1 of \cite{bedrossian2011},  $\{u_{\epsilon,r}\}_{\epsilon>0}$ is precompact in $L^1(B_r\times [0,T])$. Along a subsequence as $\epsilon\to 0$, we obtain a weak solution $u_r$ to \eqref{main} in $B_r\times [0,T]$ with no-flux boundary condition. 
Then following the proof of Theorem 1 \cite{bedrossian2011}, it follows that
$u_r\in C([0,T], L^1(B_R))$.

Now we send $r\to\infty$. Notice that the $L^\infty([0,T],L^p(B_r)), p\in [1,\infty]$ bounds we have on $\{u_r\}$ and $L^2(B_r\times [0,T])$ bounds on $|\nabla u_r^m|$ are independent of $r$. These bounds yields sufficient compactness to yield a subsequential limit $u\in C([0,T], L^1(\mathbb{R}^d))$ which is a weak solution of \eqref{main}. For complete details, we refer to Theorem 2 \cite{bedrossian2011}.

\end{proof}

\subsection{Uniqueness}

This section discusses two uniqueness results.  First let us consider a relatively smooth vector field $V$ and show comparison principle for weak solutions.

\begin{theorem}\label{thm:comp}
Write $V=(V^i)_{i=1,...,d}$ and $I_d$ as $d\times d$ identity matrix. Suppose for some $M>0$
\begin{equation}\label{cond:V} |V|<+\infty,\; -M I_d\leq  DV \leq  M I_d.\end{equation}
Let $\bar{u},\underline{u}$ be respectively a subsolution and a supersolution of \eqref{main} with initial functions $\bar{u}_0,\underline{u}_0$ such that $\bar{u}_0\leq\underline{u}_0$. Then $\bar{u} \leq\underline{u}$ for $t\geq 0$.
\end{theorem}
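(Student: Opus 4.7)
The plan is a classical Oleinik-type duality argument, adapted to handle the drift. Set $w:=\bar u-\underline u$ and factor the diffusion gap as
\[
\bar u^m-\underline u^m = a\,w, \qquad a(x,t):=m\int_0^1 [s\bar u+(1-s)\underline u]^{m-1}\,ds,
\]
which is non-negative and bounded because $\bar u,\underline u\in L^\infty$. Subtracting the weak sub- and supersolution formulations against a smooth non-negative $\phi$ and integrating by parts in $x$ on the diffusion gradient term (valid because $\bar u^m-\underline u^m\in L^2_tH^1_x$), then applying a time cut-off that exploits $w\in C([0,T],L^1)$ to allow $\phi(T,\cdot)=\psi\neq 0$, yields
\[
\int_{\R^d} w(T)\,\psi\,dx \;\leq\; \int_0^T\!\!\int_{\R^d} w\bigl[\phi_t+a\,\Delta\phi - V\cdot\nabla\phi\bigr]\,dx\,dt + \int_{\R^d} w_0\,\phi(0)\,dx.
\]

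Given $\psi\in C_c^\infty(\R^d)$ with $\psi\geq 0$, choose $\phi=\phi^\e$ as the solution of the backward linear adjoint problem
\[
\phi^\e_t + a^\e\,\Delta \phi^\e - V\cdot\nabla\phi^\e = 0 \text{ on }[0,T]\times\R^d, \qquad \phi^\e(T,\cdot)=\psi,
\]
where $a^\e$ is a smooth regularization of $a$ with $\e\leq a^\e\leq \|a\|_\infty+1$. After the time reversal $\tau=T-t$ this is uniformly parabolic with smooth coefficients (using $|V|<\infty$), so $\phi^\e$ is smooth with $0\leq\phi^\e\leq\|\psi\|_\infty$ by the maximum principle. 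The bracket then reduces to $(a-a^\e)\Delta\phi^\e$, giving
\[
\int w(T)\,\psi\,dx \;\leq\; \int_0^T\!\!\int w\,(a-a^\e)\,\Delta\phi^\e\,dx\,dt + \int w_0\,\phi^\e(0)\,dx,
\]
in which the second term is $\leq 0$ because $w_0=\bar u_0-\underline u_0\leq 0$ and $\phi^\e\geq 0$.

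The crux is the $\e$-uniform Bernstein-type bound
\[
\bigl\|\sqrt{a^\e}\,\Delta\phi^\e\bigr\|_{L^2([0,T]\times\R^d)} \leq C(\|\psi\|_{H^1},M,\|a\|_\infty,T),
\]
obtained by multiplying the adjoint equation by $\Delta\phi^\e$ and integrating by parts. The drift contribution produces the quadratic form $\int (DV)\nabla\phi^\e\cdot\nabla\phi^\e$, and the hypothesis $-MI_d\leq DV\leq MI_d$ controls precisely this quantity by $M\|\nabla\phi^\e\|_{L^2}^2$ since the antisymmetric part of $DV$ drops out of the quadratic form; this is exactly where \eqref{cond:V} is used. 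Since $(a-a^\e)/\sqrt{a^\e}$ is pointwise bounded by $\sqrt{\e}$, Cauchy--Schwarz with $w\in L^2([0,T]\times\R^d)$ (finite because $w\in L^\infty\cap L^1$) gives
\[
\left|\int_0^T\!\!\int w(a-a^\e)\Delta\phi^\e\,dx\,dt\right| \leq \sqrt{\e}\,\|w\|_{L^2}\,\|\sqrt{a^\e}\,\Delta\phi^\e\|_{L^2} \xrightarrow{\e\to 0} 0.
\]
Thus $\int w(T)\psi\,dx\leq 0$ for every non-negative $\psi\in C_c^\infty(\R^d)$ and every $T>0$, so $\bar u\leq\underline u$.

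The main obstacle will be establishing the Bernstein estimate uniformly in $\e$, since the degeneracy $a^\e\downarrow 0$ competes with the first-order drift term; the symmetric-matrix hypothesis on $DV$ is exactly what makes the energy estimate close. A secondary technical point is the rigorous justification of using $\phi^\e$ as an admissible test function, which requires a spatial cut-off and mollification (since $\phi^\e$ is not compactly supported in $x$), relying on the $L^1\cap L^\infty$ control on $w$ built into Definition~\ref{def1.1}.
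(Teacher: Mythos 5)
Your overall strategy is the same duality argument the paper uses: define the diffusion quotient $a$, solve a regularized backward adjoint problem, derive an $\e$-uniform bound on $\iint a^\e|\Delta\phi^\e|^2$ in which the hypothesis $-MI_d\leq DV\leq MI_d$ controls the quadratic form $\int (DV)\nabla\phi^\e\cdot\nabla\phi^\e$ (and, via the trace, the divergence term $\tfrac12\int(\nabla\cdot V)|\nabla\phi^\e|^2$ that also appears when you integrate $\int V\cdot\nabla\phi^\e\,\Delta\phi^\e$ by parts), and then let the coefficient error vanish. The cosmetic differences (terminal datum $\psi$ plus Gronwall, versus the paper's source term $\xi$ and a time weight $\zeta$ with $\zeta_t$ large on a short interval followed by iteration in $T$) are harmless.

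There is, however, a genuine gap in the coefficient-approximation step. You require $a^\e$ to be \emph{smooth} (so that the backward problem has a smooth solution with $\Delta\phi^\e\in L^2$ and all integrations by parts are legitimate), and simultaneously you claim the \emph{pointwise} bound $|a-a^\e|/\sqrt{a^\e}\leq\sqrt{\e}$. The latter amounts to $\|a-a^\e\|_\infty\lesssim\sqrt{\e}$, i.e.\ uniform approximation of $a$ by smooth functions. But $a$ is built from $\bar u,\underline u$, which are only bounded measurable weak solutions; $a$ need not be continuous, so no smooth $a^\e$ can approximate it uniformly. The pointwise bound is only valid for the unsmoothed choice $a^\e=a+\e$, for which the solvability and $L^2$-regularity of $\Delta\phi^\e$ that you invoke is exactly what is \emph{not} available (non-divergence form with merely measurable coefficients). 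The paper's proof resolves precisely this tension: it takes a smooth $a_{N,\e}$ with $\e\leq a_{N,\e}\leq N$ that approximates $\min\{a,N\}+\e$ only in $L^2$ with accuracy $\e$ (plus the truncation condition \eqref{cond:aNe2} on the set $\{a\geq N\}$), and then estimates
\begin{equation*}
\iint \frac{|a-a_{N,\e}|^2}{a_{N,\e}}\,|u|^2\,dxdt \;\leq\; \frac{1}{\e}\iint |a-a_{N,\e}|^2|u|^2\,dxdt \;\lesssim\; \frac{\e^2}{\e}\;=\;\e,
\end{equation*}
using $\|u\|_\infty$ and the $L^2$ closeness, which still gives the factor $\e^{1/2}\to0$ after Cauchy--Schwarz against $\bigl(\iint a_{N,\e}|\Delta\phi^\e|^2\bigr)^{1/2}$. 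Replacing your pointwise claim by this $L^2$-closeness bookkeeping (and keeping track of the truncation at level $N$) closes the gap; the rest of your argument then goes through.
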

\begin{proof}
Define $a(x,t)=(\underline{u}^m-\bar{u}^m)/(\underline{u}-\bar{u})$. 
Suppose $\epsilon>0$ is small enough and $N$ is large enough such that
\begin{equation}\label{cond:aNe2} 
 \iint_{\mathbb{R}^d\times [0,1]\cap \{a\geq N\}}|\underline{u}^m-\bar{u}^m| dxdt\leq \epsilon^2.
\end{equation}
Let $a_{N,\epsilon}$ be a smooth approximation of $a+\epsilon$ such that for $t\in [0,1]$
\begin{equation}\label{cond:aNe} \epsilon\leq a_{N,\epsilon}\leq N,\quad \|a_{N,\epsilon}(\cdot,t)-\min\{a(\cdot,t),N\}-\epsilon\|_2\leq \epsilon. \end{equation}

For any smooth non-negative compactly supported test function $\xi$, we consider the following dual problem to \eqref{main}:
\begin{equation}\label{eqn:dual}
\left\{\begin{aligned}
&\varphi_t+a_{N,\epsilon}\Delta \varphi-V\cdot\nabla \varphi +\xi= 0 &\text{ in }\mathbb{R}^d\times [0,T]; \\
& u(x,T)=0 &\text{ on } \mathbb{R}^d
\end{aligned}
\right.
\end{equation}
for some $T\in (0,1]$ to be determined.
Since $a_{N,\epsilon}\geq\epsilon$, there is a unique solution $\varphi \geq 0$ of \eqref{eqn:dual} which is smooth.

We write $u=\underline{u}-\bar{u}$. Since $\underline{u}$ and $\bar{u}$ are respectively super and subsolutions,
by the weak inequality satisfied by $u$ with respect to test function $\varphi$, we deduce
\[0\leq \iint_{\mathbb{R}^d\times [0,T]}u\varphi_t dxdt+ \iint_{\mathbb{R}^d\times [0,T]} au\Delta \varphi dxdt- \iint_{\mathbb{R}^d\times [0,T]}uV\nabla\varphi dxdt-\int_{\mathbb{R}^d}u(x,0)\varphi(x,0)dx.\]
Using that $u(\cdot,0)\geq 0, \varphi\geq 0$ and \eqref{eqn:dual}, then
\[\iint_{\mathbb{R}^d\times [0,T]}u\xi dxdt\leq \iint_{\mathbb{R}^d\times [0,T]}|u||a-a_{N,\epsilon}||\Delta\varphi|dxdt\]
\begin{equation}\label{ineq:aNe}
\leq \left(\iint_{\mathbb{R}^d\times [0,T]}a_{N,\epsilon} |\Delta\varphi|^2 dxdt\right)^\frac{1}{2}\left(\iint_{\mathbb{R}^d\times [0,T]}\frac{|a-a_{N,\epsilon}|^2}{a_{N,\epsilon}}|u|^2 dxdt\right)^\frac{1}{2}.
\end{equation}

We want to obtain a priori estimate for the term $\Delta\varphi$.

\medskip

Fix $\zeta(t)$ be a smooth function such that $1\leq \zeta(t)\leq 2$ and $\zeta_t\geq 2dM+4M+1$ for $t\in [0,T]$ which can be done when $T$ is small enough. 

We multiply \eqref{eqn:dual} by $\zeta\Delta\varphi$, after integration that is
\[\iint_{\mathbb{R}^d\times [0,T]}\zeta V\cdot\nabla\varphi\Delta\varphi dxdt=\]
\[\iint_{\mathbb{R}^d\times [0,T]}\varphi_t\zeta\Delta\varphi dxdt+\iint_{\mathbb{R}^d\times [0,T]}\zeta a_{N,\epsilon}|\Delta\varphi|^2 dxdt+\iint_{\mathbb{R}^d\times [0,T]}\zeta\xi\Delta\varphi dxdt.\]
Using integration by parts and H\"{o}lder's inequality in the first inequality, the above (see Theorem 6.5 \cite{vazquez} in 6.2.1 for details).
\[\geq \iint_{\mathbb{R}^d\times [0,T]}\frac{1}{2}\zeta_t|\nabla\varphi|^2 dxdt+\iint_{\mathbb{R}^d\times [0,T]} \zeta a_{N,\epsilon}|\Delta\varphi|^2 dxdt-\iint_{\mathbb{R}^d\times [0,T]}\zeta \nabla\xi\nabla\varphi dxdt\]
\[\geq (dM+2M)\iint_{\mathbb{R}^d\times [0,T]}|\nabla\varphi|^2 dxdt+\iint_{\mathbb{R}^d\times [0,T]} a_{N,\epsilon}|\Delta\varphi|^2 dxdt-C\iint_{\mathbb{R}^d\times [0,T]}|\nabla\xi|^2 dxdt.\]
Then $\iint_{\mathbb{R}^d\times [0,T]} a_{N,\epsilon}|\Delta\varphi|^2 dxdt\leq$
\begin{equation}\label{eq:testfunzeta}
 \iint_{\mathbb{R}^d\times [0,T]}\zeta V\cdot\nabla\varphi\Delta\varphi dxdt-(d+2)M\iint_{\mathbb{R}^d\times [0,T]}|\nabla\varphi|^2 dxdt+C\iint_{\mathbb{R}^d\times [0,T]}|\nabla\xi|^2 dxdt.
\end{equation}
By \eqref{cond:V}, $-(V_{x_j}^i)\leq M I_d$ and $|\nabla\cdot V|\leq dM$, 
\[\iint_{\mathbb{R}^d\times [0,T]}\zeta V\cdot\nabla\varphi\Delta\varphi dxdt=\frac{1}{2}\iint_{\mathbb{R}^d\times [0,T]}\zeta |\nabla \varphi|^2\nabla\cdot V dxdt -\iint_{\mathbb{R}^d\times [0,T]}\zeta \sum_{i,j}\varphi_{x_i}V^i_{x_j}\varphi_{x_j}dxdt\]
\[\leq ({d}+2)M\iint_{\mathbb{R}^d\times [0,T]} |\nabla\varphi|^2dxdt.\]
Plugging the above inequality and \eqref{eq:testfunzeta} into \eqref{ineq:aNe}, we get
\[\iint_{\mathbb{R}^d\times [0,T]} u\xi dx dt\leq C\|\nabla \xi\|_{L^2(\mathbb{R}^d\times [0,T])} \left(\iint_{\mathbb{R}^d\times [0,T]}\frac{|a-a_{N,\epsilon}|^2}{a_{N,\epsilon}}|u|^2 dxdt\right)^\frac{1}{2}.\]

Now we use \eqref{cond:aNe} and find out \[\iint_{\mathbb{R}^d\times [0,T]}|a-a_{N,\epsilon}|^2|u|^2
 \leq 2\iint_{\mathbb{R}^d\times [0,T]}{\left|\min\{a,N\}+\epsilon-a_{N,\epsilon}\right|^2}|u|^2 dxdt+\]\[2\iint_{\mathbb{R}^d\times [0,T]}\epsilon^2|u|^2 dxdt+2\iint_{\{a>N\}}a^2|u|^2 dxdt\]
\[\leq \left(2\|u\|_\infty^2+2\|u\|^2_{L^2(\mathbb{R}^d\times [0,T])}+2\right) \epsilon^2\leq C\epsilon^2.\]
So since $a_{N,\epsilon}\geq \epsilon$, by \eqref{ineq:aNe}
\[\iint_{\mathbb{R}^d\times [0,T]} u\xi dx dt\leq C\|\nabla \xi\|_{L^2(\mathbb{R}^d\times [0,T])} \epsilon^\frac{1}{2}.\]
Letting $\epsilon>0$, we conclude that $\iint_{\mathbb{R}^d\times [0,T]} u\xi dx dt\leq 0$ for all arbitrary $C_c^\infty$ test function $\xi\geq 0$. And so $u\leq 0$ within time $[0,T]$. 

{Finally since $T$ only depends on $d,M$, doing this repeatedly finishes the proof.}
\end{proof}

  \medskip

Our second uniqueness result is a consequence of the following $L^1$ contraction, which holds between ``strong solutions" if $m$ is not too large depending on the singularity of $V$. The existence of strong solutions remain open, with the exception of zero drift case (see \cite{AltLuckhaus} and section 8.1.1 of  \cite{vazquez}).
\begin{theorem}\label{unique:local}
Suppose $\|V\|_p<\infty$ for some $p\geq 2$ and $1<m< 1+\frac{2}{p}$. Let $u_1,u_2$ be two nonnegative weak solutions to \eqref{main} with initial datas $u_{1,0},u_{2,0}$ respectively. Assume in addition that 
\[\partial_t( u_1- u_2)\in L^1(\mathbb{R}^d\times [0,T]).\]
Then the following holds:
\[\int_{\mathbb{R}^d} (u_1-u_2)_+(t) dx\leq 
\int_{\mathbb{R}^d} (u_{1,0}-u_{2,0})_+ dx \quad\hbox{ for } 0\leq t\leq T.
\]
\end{theorem}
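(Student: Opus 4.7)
I would prove this by a Kato-type argument, testing the PDE satisfied by $w := u_1 - u_2$ against a smooth approximation of $\mathbb{1}_{\{w > 0\}}$. Introducing $\phi := u_1^m - u_2^m$ (which has the same sign as $w$ pointwise), subtracting the weak equations for $u_1$ and $u_2$ yields
\[ w_t = \Delta\phi + \nabla\cdot(wV)\quad\text{weakly on }\R^d\times[0,T]. \]
Let $\beta_\delta$ be a smooth, convex, nondecreasing function with $\beta_\delta(s)=0$ for $s\leq 0$, $\beta_\delta(s)\nearrow s_+$, $0\leq\beta_\delta'\leq 1$, and $\beta_\delta''$ supported in $(0,\delta)$ with $\beta_\delta''\leq C/\delta$; let $\psi_R\in C_c^\infty(\R^d)$ be a cutoff equal to $1$ on $B_R$ and vanishing outside $B_{2R}$. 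The plan is to test the equation for $w$ against $\beta_\delta'(\phi)\psi_R$ and then successively send $\delta\to 0$ and $R\to\infty$.

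For the time-derivative contribution, the regularity hypothesis $w_t\in L^1$ combined with Stampacchia's lemma (giving $w_t=0$ a.e.\ on $\{\phi=0\}=\{w=0\}$) lets me pass $\beta_\delta'(\phi)\to\mathbb{1}_{\{\phi>0\}}$ under the integral by dominated convergence to obtain $\int w_+(\cdot,t)\psi_R - \int w_+(\cdot,0)\psi_R$. Expanding $\nabla[\beta_\delta'(\phi)\psi_R]=\beta_\delta''(\phi)\nabla\phi\,\psi_R+\beta_\delta'(\phi)\nabla\psi_R$ splits the spatial side into four pieces: the pure diffusion term $-\iint\beta_\delta''(\phi)|\nabla\phi|^2\psi_R\leq 0$ is a good term; the two terms involving $\nabla\psi_R$ converge as $\delta\to 0$ to $\iint\phi_+\Delta\psi_R$ and $-\iint w_+V\cdot\nabla\psi_R$, both of which vanish as $R\to\infty$ thanks to $\phi\in L^1$ and the H\"older bound $\|w_+V\|_1\leq \|V\|_p\|w_+\|_{p/(p-1)}$; the remaining drift-diffusion cross term is controlled by Cauchy--Schwarz:
\[ \left|\iint \beta_\delta''(\phi)\nabla\phi\cdot(wV)\psi_R\right|\leq \tfrac{1}{2}\iint\beta_\delta''(\phi)|\nabla\phi|^2\psi_R + \tfrac{1}{2}\underbrace{\iint\beta_\delta''(\phi)w^2|V|^2\psi_R}_{=:\,\mathcal{E}_\delta}, \]
so half of this is absorbed into the good diffusion term and only $\mathcal{E}_\delta$ must be handled.

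The hard part is showing $\mathcal{E}_\delta\to 0$, and this is where the degeneracy of the diffusion plays a decisive role. The key elementary inequality is $(a+b)^m\geq a^m+b^m$ for $a,b\geq 0$, $m\geq 1$; applied with $a=u_2$, $b=w$ it gives $\phi\geq w^m$ on $\{w>0\}$. Consequently on $\operatorname{supp}\beta_\delta''(\phi)$ one has $w\leq\phi^{1/m}\leq\delta^{1/m}$, and therefore
\[ \mathcal{E}_\delta \;\leq\; C\,\delta^{2/m-1}\iint \mathbb{1}_{\{0<\phi<\delta\}}|V|^2\psi_R\,dx\,ds. \]
As $\delta\to 0$, $\mathbb{1}_{\{0<\phi<\delta\}}\to 0$ pointwise almost everywhere; since $|V|^2\in L^1_{\mathrm{loc}}$ (this is where $p\geq 2$ is used), dominated convergence sends the integral to $0$, while the prefactor $\delta^{2/m-1}$ stays bounded under $m<2$ --- which is implied by the hypothesis $1<m<1+\tfrac{2}{p}$. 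Passing $\delta\to 0$ and then $R\to\infty$ in the resulting differential inequality then yields $\int w_+(\cdot,t)\,dx\leq\int w_+(\cdot,0)\,dx$. The one genuine subtlety in the whole scheme is this quantitative control of the drift error $\mathcal{E}_\delta$, which rests entirely on the free-boundary behavior $w\lesssim\phi^{1/m}$ characteristic of degenerate porous-medium solutions.
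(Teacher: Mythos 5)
Your proposal is correct, and its skeleton coincides with the paper's: both are Kato-type arguments that test the equation for $u_1-u_2$ against an approximation of $\mathbb{1}_{\{u_1^m>u_2^m\}}$ (your $\beta_\delta'(\phi)$ is the paper's $\varphi_n(w)$ with $\delta\sim 1/n$), absorb the quadratic gradient term $\iint \beta_\delta''(\phi)|\nabla\phi|^2$ by Cauchy--Schwarz, and exploit the degeneracy inequality $(u_1-u_2)^m\le u_1^m-u_2^m$ on $\{u_1>u_2\}$. Where you genuinely diverge is in killing the drift error term. The paper splits $|u_1-u_2|^2=|u_1-u_2|^{2-\frac{2}{q}}|u_1-u_2|^{\frac{2}{q}}$, bounds the first factor by $n^{-\frac{1}{m}(2-\frac2q)}$ via $|u_1-u_2|\le w^{1/m}$, and H\"older-interpolates the rest between $\|u_1-u_2\|_{L^\infty_tL^1_x}$ and $\|V\|_p$, obtaining the quantitative rate $n^{1-\frac{p+2}{pm}}$, which is where the strict hypothesis $m<1+\frac2p$ enters. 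You instead use the cruder pointwise bound $w^2\le\delta^{2/m}$ together with $\beta_\delta''\le C/\delta$ and dispose of $\iint\mathbb{1}_{\{0<\phi<\delta\}}|V|^2\psi_R$ by dominated convergence, which only needs $|V|^2\in L^1_{loc}$ (i.e.\ $p\ge2$) and $m\le 2$; the spatial cutoff $\psi_R$ is then genuinely needed (when $p>2$, $|V|^2\notin L^1(\R^d)$), and you handle its error terms correctly via $\phi_+\in L^1$ and $w_+V\in L^1$. The trade-off: the paper's route gives an explicit decay rate and works without cutoffs; yours is qualitative but formally covers the wider range $m\le 2$, $p\ge 2$, so under the stated hypotheses ($m<1+\frac2p\le 2$) it certainly suffices. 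Your treatment of the time term ($\partial_t(u_1-u_2)\in L^1$ plus the chain rule $\partial_t w_+=\mathbb{1}_{\{w>0\}}\partial_t w$) is at the same level of rigor as the paper's, which passes to the limit $n\to\infty$ with the same implicit justification.
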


\begin{proof}

Let $\varphi\in C^1(\mathbb{R})$ be such that  $ \varphi(s)=0$ if $s\leq 0$ and $ \varphi(s)=1$ if $s\geq 1$, with $\varphi'(s)\in (0,2)$. Denote $\varphi_n(s):=\varphi(ns)$ for $n=1,2,..$. By definition of the weak solution we have, with $w:=u_1^m-u_2^m$,

\[
\iint_{\mathbb{R}^d\times [0,T]} (u_1-u_2)_t\varphi_n(w)dxdt=-\iint_{\mathbb{R}^d\times [0,T]}\nabla  w\nabla\varphi_n(w)dxdt-\iint_{\mathbb{R}^d\times [0,T]}(u_1-u_2)V\cdot \nabla \varphi(w) dxdt\]
\[=-\iint_{\mathbb{R}^d\times [0,T]} |\nabla w|^2 \varphi_n'(w)dxdt{-\iint_{\mathbb{R}^d\times [0,T]} (u_1-u_2) V\cdot\nabla w\; \varphi_n'(w)dxdt}.
\]
Since $\varphi_n'\leq 2n$,
\[{-\iint_{\mathbb{R}^d\times [0,T]} (u_1-u_2) V\nabla w \;\varphi_n'(w)dxdt}\leq  \iint_{\mathbb{R}^d\times [0,T]} |\nabla w|^2 \varphi_n'(w)dx+2n\iint_{0<w\leq \frac{1}{n}} |u_1-u_2|^2 |V|^2dxdt.\]
When  $p>2$, let $q$ be such that $\frac{2}{q}+\frac{2}{p}=1$. Note that when $w>0$, $u_1> u_2\geq 0$ and thus 
$$ |u_1-u_2|^m \leq |u_1^m - u_1^{m-1}u_2| \leq  |w|.
$$
Thus we have
\begin{align}\nonumber\iint_{0<w\leq \frac{1}{n}} |u_1-u_2|^2 |V|^2dxdt &\leq
\iint_{0<w\leq \frac{1}{n}} |w|^{\frac{1}{m}(2-\frac{2}{q})}|u_1-u_2|^\frac{2}{q} |V|^2dxdt.
\\&\leq n^{-\frac{1}{m}(2-\frac{2}{q})}\left(\iint_{0<w\leq \frac{1}{n}} |u_1-u_2|dxdt\right)^\frac{2}{q} \left(\iint_{\mathbb{R}^d\times [0,T]}|V_2|^p dxdt\right)^\frac{2}{p}.
\nonumber
\end{align}
Then, since $|u_1-u_1| \leq C([0,T];L^1(\R^d))$, it follows that
\[\int_{\mathbb{R}^d} (u_1-u_2)_t \;\varphi_n(w)dx\leq Cn^{1-\frac{1}{m}(2-\frac{2}{q})}=C n^{1-\frac{p+2}{pm}},\]
the right hand side of which goes to $0$ as $n\to \infty$ due to $m<1+\frac{2}{p}$. 
Now we send $n\to\infty$ to derive the desired inequality:
\[\int_{\mathbb{R}^d} (u_1-u_2)_+(t) dx\leq 
\int_{\mathbb{R}^d} (u_{1,0}-u_{2,0})_+ dx.\]

If $p=2$, parallel and easier proof yields the result. \end{proof}


\section{H\"{o}lder Continuity}\label{sec:holder}

\subsection{Interior Estimates}
In this section we establish the H\"{o}lder continuity results for \eqref{main}.

\begin{theorem}\label{thmcont}
 Suppose $V$ is locally uniformly bounded in $L^p(\R^d)$ for some $p>d+\frac{4}{d+2}$. Let $u$ be a non-negative weak solution to equation \eqref{main} in $Q_1$. If $u(\cdot,t)$ is uniformly bounded by $M$ in $Q_1$, then $u(\cdot,\cdot)$ is H\"{o}lder continuous in $Q_{\frac{1}{2}}$. The H\"{o}lder norm only depends on $M$, $m$, $p$, $d$ and $\|V\|_{L^p_{loc}}$.
 \end{theorem}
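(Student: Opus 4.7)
The plan is to implement the intrinsic-scaling oscillation-decay iteration of DiBenedetto and Friedman, modified to handle the $L^p$ drift term. The argument shows that for a solution bounded by $M$ in $Q_1$, there exist universal constants $\eta, \lambda \in (0,1)$ such that whenever $u$ has oscillation $\omega$ in an intrinsically scaled cylinder $Q(\rho, c\omega^{1-m})$, its oscillation in $Q(\lambda \rho, c\omega^{1-m})$ is at most $(1-\eta)\omega$. The choice $c\omega^{1-m}$ of time-scale is dictated precisely by the rescaling $u_{a,r}(x,t) = au(rx, r^2 a^{m-1}t)$ from the introduction: on unit scale, it makes the rescaled diffusion non-degenerate at the level of the oscillation. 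Iterating across a geometric sequence of such cylinders, combined with a standard chaining argument, will yield H\"{o}lder regularity in $Q_{1/2}$ with an exponent and constant that depend only on $M$, $m$, $p$, $d$ and $\|V\|_{L^p_{\text{loc}}}$.

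Within a fixed cylinder $Q := Q(\rho, c\omega^{1-m})$ with $\mu^+ = \sup_Q u$, $\mu^- = \inf_Q u$, and $\omega = \mu^+ - \mu^-$, I would set up the dichotomy: either $|\{u \leq \mu^- + \tfrac{\omega}{2}\} \cap Q| \geq \tfrac{1}{2}|Q|$, or the complementary inequality holds. The first case controls the oscillation from above via Proposition~\ref{prop1}, the second from below via Proposition~\ref{prop2}. Each case runs a De~Giorgi type iteration on truncations $(u-k)_\pm$ where $k$ sits in the appropriate half of $[\mu^-,\mu^+]$. The crucial energy estimate is a Caccioppoli inequality of the form
\[
\sup_t \int (u-k)_\pm^2 \zeta^2\,dx + \iint |\nabla[(u-k)_\pm^{(m+1)/2}]|^2 \zeta^2\,dx\,dt \leq \text{(test-function terms)} + \text{(drift contribution)},
\]
where the drift contribution is split using H\"{o}lder's inequality and absorbed against the diffusion term. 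After intrinsic rescaling, the drift picks up a factor proportional to $\omega^{m-1}$ from the time-stretching, and the scaling balance between this factor, the Sobolev exponent, and the $L^p$ norm of $V$ is what singles out the threshold $p > d + \frac{4}{d+2}$ rather than the formal scaling threshold $p = d$.

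With the energy estimate in hand, the De~Giorgi iteration on a shrinking sequence of nested cylinders (the geometric-convergence step) upgrades ``small measure of $\{(u-k)_\pm > 0\}$ in $Q$'' to the pointwise conclusion $(u-k)_\pm = 0$ on the inner cylinder, provided the initial measure is below a universal threshold. In the case where only a slice-wise smallness at a single time is given, the bridge to a space-time smallness is furnished by the ``measure-to-pointwise'' propagation of Lemma~\ref{twoseq}, which uses logarithmic energy estimates to expand the bad set in time while keeping the measure controlled; once that is established, the De~Giorgi iteration again closes the argument.

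The main obstacle will be verifying the Caccioppoli inequality with drift and making the De~Giorgi iteration converge when $p$ is only slightly above the critical threshold. Concretely, the difficulty is that the diffusion coefficient $m u^{m-1}$ degenerates precisely where the drift can be large, so the usual absorption trick requires careful interpolation: one balances $\iint |V|^2 (u-k)_\pm^2$ against a fractional power of the diffusion energy via a parabolic Sobolev embedding, producing small exponents that must remain positive as the iteration proceeds. This exponent bookkeeping is what prevents the method from reaching $p = d$ and yields the quantitative condition $p > d + \frac{4}{d+2}$. Once the oscillation decay is established with universal $\eta, \lambda$, the rescaling invariance of the problem together with the intrinsic time-scale $c\omega^{1-m}$ allows an induction on dyadic oscillation levels, giving the Theorem.
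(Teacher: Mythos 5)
Your plan is the same in outline as the paper's proof (intrinsic rescaling of $\nu=u^{1/m}$, a Caccioppoli estimate with the drift absorbed by H\"{o}lder against $\|V\|_{L^p}$, a De~Giorgi alternative giving oscillation decay via Propositions~\ref{prop1} and \ref{prop2}, then dyadic iteration), but there is a genuine gap exactly at the point you dismiss as ``exponent bookkeeping.'' The standard single-sequence De~Giorgi iteration you describe --- energy inequality plus the parabolic embedding of $V^{1,0}$, which contributes the gain $\tilde B_{n;1}^{2/(d+2)}$ --- closes only when $q_1+\frac{2}{d+2}>1$ with $q_1=1-\frac{2}{p}$, i.e.\ only for $p>d+2$. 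Nothing in your proposal explains how to go below $d+2$; asserting that careful interpolation ``yields the quantitative condition $p>d+\frac{4}{d+2}$'' is precisely the step that needs a new idea. In the paper this is done by running the iteration on \emph{two} coupled measure quantities, $a_n=\tilde B_{n;1}$ and $b_n=\tilde B_{n;q_1}$: besides the parabolic embedding one also uses the spatial Sobolev embedding slice-by-slice (the $L^{2d/(d+2)}\to L^2$ step with the characteristic function in $L^d$), producing the pair of recursions \eqref{firstone}--\eqref{secondone}, and these are closed by the purely numerical two-sequence Lemma~\ref{twoseq}, whose limiting condition $\frac{2}{d+2}+\frac{2}{d}\cdot\frac{q_1}{1-q_1}>1$ is exactly $p>d+\frac{4}{d+2}$. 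Relatedly, you misidentify Lemma~\ref{twoseq}: it is not a logarithmic ``measure-to-pointwise in time'' lemma; the logarithmic estimate (with the extra drift corrections and the smallness requirement $r\le c_1w^{c_2}$) is a separate ingredient, Lemma~\ref{claim2}, inside the proof of Proposition~\ref{prop2}.

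Two smaller points. First, after rescaling the drift is small only when $r$ is small compared to a power of the oscillation ($r<w^{\delta_0}$ in Proposition~\ref{prop1}, $r<c_1w^{c_2}$ in Proposition~\ref{prop2}), so the oscillation alternative cannot be invoked unconditionally at every scale; the paper's final iteration needs the extra case $r_n>c_1w_n^{c_2}$, in which one shrinks $r$ while keeping $w$ (harmless, since there the oscillation is already polynomially small in the radius), and your sketch omits this. Second, the alternative in the degenerate setting is not the symmetric $\tfrac12$--$\tfrac12$ dichotomy you state: Proposition~\ref{prop1} requires the set $\{\nu\ge M^-+\tfrac w2\}$ to fill all but a small fraction $c_0$ of the cylinder, and its conclusion is a lift of the infimum (hence nondegeneracy of the diffusion, after which classical parabolic regularity applies), while Proposition~\ref{prop2} lowers the supremum when that fails; with a $\tfrac12$ threshold neither proposition applies as stated.
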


The proof of above theorem consists of several lemmas and propositions.  We begin with notations. For given $p$, we will use 
\[\delta_1:=2-\frac{2d}{p},\;\delta_2:=\frac{1}{2}\delta_1,\; q_1:=1-\frac{2}{p},\;q_2:=1-\frac{1}{p}.\] 
In particular if $p>d\geq 2$, $q_1+\frac{2}{d}>1,q_2+\frac{2}{d+2}>1$. Let us define a new variable

 \begin{equation}\label{pressure}
 \nu:=u^{\frac{1}{m}}.
 \end{equation} Then $\nu$ satisfies
\begin{equation}\label{eqcontv}\frac{\partial}{\partial t}\nu^{\frac{1}{m}}=\Delta \nu+\nabla\cdot(\nu^{\frac{1}{m}}{V}).\end{equation}

\medskip

Next we re-scale $\nu$ by 
\begin{equation}\label{alpha:rescale}
v(x,t):=\nu(rx,r^2 w^{-\alpha}t) \,\,\hbox{ in } Q(r,w^{-\alpha}) \quad \hbox{ with } \alpha := \frac{m-1}{m}.
\end{equation}
Then  $v$ solves
\begin{equation}\label{eqnu}
w^\alpha(v^\frac{1}{m})_t=\Delta v+r\nabla\cdot(v^\frac{1}{m}\tilde{V}),  \quad \hbox{ where } \tilde{V}(x,t):=V(rx,r^2w^{-\alpha}t).\end{equation}
Also denote
\[v_k^+:=\max\{(v-k),0\},\quad v_k^-:=\max\{(k-v),0\}. \]
 
 \medskip
 
 We begin with an energy inequality. The proof of the lemma below are in the same spirit of the ones in Theorem 1.2 in \cite{dibenedetto2} and Lemma 6.5 \cite{dibenedetto1} which applies to \eqref{main} with $V=0$. We will emphasize on the differences in the proof that occurs due to  the nonzero drift term.

\begin{lemma}\label{inte}
Suppose $v$ satisfies \eqref{eqnu} in a neighbourhood of $Q_1$ for some positive $w,r$ such that  $w\geq osc_{Q_1}v $. Suppose $V$ is locally uniformly bounded in $L^p(\mathbb{R}^d)$ for some $p>0$. Let $\zeta\in C_0^\infty(Q_1)$ be non-negative and 
\[\zeta\leq 1, \quad|\nabla\zeta|\leq C_1, \quad|\Delta\zeta^2|\leq C_1^2, \quad|\zeta_t|\leq C_2.\]
Denote $B':=B_1\cap supp\{\zeta\}$ and for $q\in (0,1]$
\[B_{k;q}:=\left(\int_{-1}^0\left(\int_{B'} \chi_{\{v(x,t)< k\}} dx\right)^qdt\right)^{\frac{1}{q}},\]\[ A_{k;q}:=\left(\int_{-1}^0\left(\int_{B'} \chi_{\{v(x,t)> k\}} dx\right)^qdt\right)^{\frac{1}{q}}\]
and $M^+,M^-$ as the supremum and infimum of $v$ in $Q_1$ respectively. 

If $\frac{w}{4}\geq M^-$, then for $t\in[-1,0], k\leq M^+$, 
\[\int_{B_1\times\{t\}}|{v_k^-}\zeta|^2 dx+\int_{-1}^t\left\|\nabla \l v_k^-\zeta\r\right\|^2_{2,B_1\times \{s\}}ds \lesssim (C_1^2+C_2) w^{2} B_{k;1}+\]
\begin{equation*}
r^{\delta_1}w^{\frac{2}{m}}B_{k;q_1}^{q_1}+
C_1 r^{\delta_2}w^{1+\frac{1}{m}} B_{k;q_2}^{q_2}.
\end{equation*}
For $t\in[-1,0],\frac{w}{4}\geq M^-, k\geq M^-$,
we have
\[\int_{B_1\times\{t\}}|{v_k^+}\zeta|^2 dx+\int_{-1}^t\left\|\nabla\l v_k^+\zeta\r\right\|^2_{2,B_1\times \{s\}}ds \lesssim (C_1^2+C_2) w^{2} A_{k;1}+\]
\begin{equation}\label{inteplus}
r^{\delta_1}w^{\frac{2}{m}}A_{k;q_1}^{q_1}+
C_1 r^{\delta_2}w^{1+\frac{1}{m}} A_{k;q_2}^{q_2}.
\end{equation}
\end{lemma}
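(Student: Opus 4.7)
The argument follows the Caccioppoli-type energy estimate of DiBenedetto (Lemma 6.5 of \cite{dibenedetto1}, Theorem 1.2 of \cite{dibenedetto2}) for the pure porous medium case $V=0$, with the novelty being the control of the two extra integrals produced by the drift $r\nabla\cdot(v^{1/m}\tilde V)$. I plan to test the rescaled equation \eqref{eqnu} against a Steklov-average regularization of $-v_k^-\zeta^2$, integrate over $B_1\times[-1,t]$, and then pass to the limit.

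To absorb the degeneracy of the time derivative $(v^{1/m})_t$, I introduce the primitive
\[\Phi_k^-(v):=\int_v^k (k-s)\,\tfrac{1}{m}\,s^{\frac{1}{m}-1}\,ds\quad(v\le k),\qquad \Phi_k^-(v):=0\quad(v\ge k),\]
so that $\partial_t\Phi_k^-(v)=-v_k^-(v^{1/m})_t$. A direct two-sided computation (using $\tfrac{1}{m}-1<0$) yields
\[c_m\,k^{\frac{1}{m}-1}(v_k^-)^2 \;\le\; \Phi_k^-(v) \;\le\; C_m\,k^{\frac{1}{m}+1}\chi_{\{v<k\}}.\]
The hypothesis $w/4\ge M^-$ together with $w\ge\osc_{Q_1}v=M^+-M^-$ forces $M^+\le 5w/4$, hence $k\le M^+\sim w$ and $w^\alpha k^{\frac{1}{m}-1}\gtrsim 1$. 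Therefore the time-derivative contribution generates the clean energy piece $\int_{B_1\times\{t\}}|v_k^-\zeta|^2\,dx$ on the LHS (after integration from $t=-1$, where $\zeta\equiv 0$), while the $\zeta_t$ remainder is dominated by $C_2 w^2 B_{k;1}$ via the upper bound on $\Phi_k^-$.

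Integration by parts on $\int\Delta v\cdot v_k^-\zeta^2\,dx$ produces $\int|\nabla v_k^-|^2\zeta^2\,dx$ together with a cross term $2\int v_k^-\zeta\,\nabla v_k^-\cdot\nabla\zeta\,dx$ (using $\nabla v=-\nabla v_k^-$ on $\{v<k\}$); Young's inequality absorbs a small portion into the main gradient piece and leaves a remainder controlled by $C_1^2 w^2 B_{k;1}$, which combined with $|\nabla(v_k^-\zeta)|^2\le 2|\nabla v_k^-|^2\zeta^2+2(v_k^-)^2|\nabla\zeta|^2$ yields the $\|\nabla(v_k^-\zeta)\|_2^2$ piece on the LHS. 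After a second integration by parts, the drift contributes
\[-r\int v^{1/m}\tilde V\cdot\nabla v_k^-\,\zeta^2\,dx\,ds \;-\; 2r\int v^{1/m}v_k^-\zeta\,\tilde V\cdot\nabla\zeta\,dx\,ds.\]
For the first piece, Young trades a small portion into $\int|\nabla v_k^-|^2\zeta^2$ and leaves $C r^2 w^{2/m}\int|\tilde V|^2\chi_{\{v<k\}\cap B'}\,dx$ (using $v^{1/m}\le w^{1/m}$ on $\{v<k\}$); H\"older in space with the pair $(p/2,p/(p-2))$ together with the rescaling identity $\|\tilde V(\cdot,s)\|_p=r^{-d/p}\|V\|_p$ turns $r^2\|\tilde V\|_p^2$ into $r^{\delta_1}\|V\|_p^2$, and the time integral pairs naturally with $|\{v<k\}\cap B'|^{q_1}$ to give the $r^{\delta_1}w^{2/m}B_{k;q_1}^{q_1}$ term. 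The second piece is bounded directly using $v^{1/m}v_k^-\le w^{1+1/m}$, $|\nabla\zeta|\le C_1$, H\"older in space (exponent $p/(p-1)$), the same rescaling identity, and H\"older in time with $q_2$, producing $C_1 r^{\delta_2}w^{1+1/m}B_{k;q_2}^{q_2}$.

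The delicate point is the precise matching of the powers of $w$: the degenerate prefactor $w^\alpha=w^{1-1/m}$ on the time derivative must cancel the factor $k^{1/m-1}$ in $\Phi_k^-$, and it is the hypothesis $w/4\ge M^-$ (combined with $w\ge\osc_{Q_1}v$) that forces $k\sim w$ on the relevant sub-level sets. The estimate \eqref{inteplus} for $v_k^+$ is symmetric: testing with $v_k^+\zeta^2$ and using $\Phi_k^+(v):=\int_k^v(s-k)\tfrac{1}{m}s^{\frac{1}{m}-1}\,ds$, whose lower bound $w^\alpha\Phi_k^+(v)\gtrsim(v_k^+)^2$ now exploits $v\le M^+\le 5w/4$ to estimate $s^{1/m-1}\ge c\,w^{1/m-1}$ from below on $[k,v]$; the diffusion, drift, and cutoff computations run in parallel with $B_{k;q}$ replaced by $A_{k;q}$.
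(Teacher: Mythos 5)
Your proposal is correct and follows essentially the same route as the paper: test with $\pm v_k^{\pm}\zeta^2$, handle the degenerate time derivative through the primitive $\Phi_k^{\pm}$ (identical to the paper's $m^{-1}\int_0^{v_k^{\pm}}(k\pm\xi)^{-\alpha}\xi\,d\xi$) with two-sided bounds exploiting $k,v\lesssim w$ from $M^-\leq w/4$ and $w\geq \osc_{Q_1}v$, and control the two drift integrals by Young plus H\"older in space together with $\|\tilde V\|_p=r^{-d/p}\|V\|_p$, yielding exactly the $r^{\delta_1}w^{2/m}$ and $C_1 r^{\delta_2}w^{1+1/m}$ terms. The only differences (Steklov averaging, and writing out the $v_k^-$ case while sketching the $v_k^+$ case rather than the reverse) are cosmetic; note only that $M^+\sim w$ need not hold — but your argument only uses $k,v\lesssim w$, which does.
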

\begin{proof}
Let us only prove the second inequality. After multiplying \eqref{eqnu} by $v_k^+\zeta^2$ and doing integration in space as well as from $0$ to $t$, we get
\[w^\alpha m^{-1}\int_{B_1\times\{t\}}\left(\int_0^{v_k^+}(k+\xi)^{-\alpha}\xi d\xi\right)\zeta^2 dx+\int_{-1}^t\left\|\nabla \l v_k^+\zeta\r\right\|^2_{2,B_1\times \{s\}}ds \]
\[\leq 2C_1^2\int_{-1}^t\left\|v_k^+\right\|^2_{2,B_1}ds+2C_2 w^\alpha m^{-1} \int_{-1}^t\int_{B_1}\left(\int_0^{v_k^+}(k+\xi)^{-\alpha}\xi d\xi\right)\zeta dxds+\]
\[r\int_{-1}^{t}\int_{B_1}v^{\frac{1}{m}}\tilde{V}\nabla( v_k^+ \zeta^2) dxds+
2C_1 r\iint_{Q_1}v^{\frac{1}{m}}|\tilde{V}| v_k^+ \zeta dxds.\]
Since $v_k^++k\leq w$, we know
\begin{equation}\label{know}
\frac{1}{2}w^{-\alpha}(v_k^+)^2\leq \int_0^{v_k^+}(k+\xi)^{-\alpha}\xi d\xi\leq \int_0^{v_k^+}(k+\xi)^{\frac{1}{m}} d\xi\leq w^\frac{1}{m}v_k^+.
\end{equation}
The term $r\int_{-1}^t\int_{B_1}v^{\frac{1}{m}}\tilde{V}\nabla (v_k^+ \zeta^2) dxds$ is bounded by
\[2r^2\int_{-1}^t\int_{B_1}v^{\frac{2}{m}}|\tilde{V}|^2\zeta^2\chi_{\{ v>k\}}dxds +\frac{1}{2}\int_{-1}^t\int_{B_1}|\nabla \l v_k^+ \zeta\r|^2dxds+r\int_{-1}^t\int_{B_1}v^{\frac{1}{m}}|\tilde{V}|v_k^+\zeta|\nabla \zeta|dxds.\]
From the above inequality we deduce
\[\int_{B_1\times\{t\}}|{v_k^+}\zeta|^2 dx+\int_{-1}^t\left\|\nabla\l  v_k^+\zeta\r \right\|^2_{2,B_1\times \{s\}}ds \lesssim C_1^2\left\|v_k^+\right\|^2_{2,Q_1}+C_2 w \iint_{Q_1}v_k^+ dxdt+\]
\begin{equation*}
r^2\iint_{Q_1}v^{\frac{2}{m}}|\tilde{V}|^2 \zeta^2\chi_{\{ v>k\}}dxdt+C_1r\iint_{Q_1}v^{\frac{1}{m}}|\tilde{V}| v_k^+ \zeta dxds.\end{equation*}
We denote the last two terms in the above by $X$. Note $v_k^+\lesssim w$, therefore
\[\left\|v_k^+\right\|^2_{2,Q_1}\leq w^2 A_{k;1},\; \left\|v_k^+\right\|_{1,Q_1}\leq w A_{k;1}.\]
Recalling that $A_{k;1}= meas\{Q_1\cap \{v>k\}\}$, it follows that  
\begin{equation}\label{medstep}
\int_{B_1\times\{t\}}|{v_k^+}\zeta|^2(t) dx+\int_{-1}^t\left\|\nabla \l v_k^+\zeta\r\right\|^2_{2,B_1\times \{s\}}ds \lesssim (C_1^2+C_2) w^2 A_{k;1}+X.
\end{equation}

Now we bound the term $X$.
Since $\tilde{V}(x,t)=V(rx,r^2w^{-\alpha}t)$, by the assumption, for each time $t$
\begin{equation}\label{lpV}\|\tilde{V}(\cdot,t)\|_p=r^{-\frac{d}{p}}\|V(\cdot,t)\|_p \lesssim r^{-\frac{d}{p}}.\end{equation}
Then recalling $q_1:=1-\frac{2}{p}$,
\[ r^2\iint_{Q_1} |\tilde{V}|^2\zeta^2\chi_{ \{v>k\}}dxdt\leq r^2\int_{-1}^0 \l\int_{B_1}|\tilde{V}|^p dx\r^{\frac{2}{p}}  \l\int_{B'} \chi_{v>k}dx\r^{q_1} dt\]
\[\leq \int_{-1}^0 r^2\|\tilde{V}\|_{p}^2 \l\int_{B'} \chi_{v>k}dx\r^{q_1} dt\lesssim r^{2-\frac{2d}{p}}A_{k;q_1}^{q_1}.
\]
Similarly, for  $q_2$ satisfying $ \frac{1}{p}+q_2=1$ we have
\[ r\iint_{Q_1} |\tilde{V}|\zeta\chi_{ v>k}dxdt \lesssim r^{1-\frac{d}{p}}A_{k;q_2}^{q_2}.
\]

Combining with \eqref{medstep}, this immediately gives \eqref{inteplus} by the assumptions. Parallel argument applies for the first inequality, except that instead of \eqref{know} we apply
\[\frac{1}{2}k^{-\alpha}(v_k^-)^2\leq \int_0^{v_k^-}(k-\xi)^{-\alpha}\xi d\xi\leq k^\frac{1}{m}v_k^-\]
and the bounds of $v,\tilde{V}$.
\end{proof}

\begin{corollary}\label{lemmaremark}
Under the assumptions of Lemma \ref{inte}. If there exists some universal constants $c,\epsilon>0$ such that
\[k\geq cw,\;r^{\delta_1}w^\frac{2}{m}\leq r^\epsilon w^2,\; r^{\delta_2}w^{1+\frac{1}{m}}\leq r^\epsilon w^2.\] 
Then we have
\begin{equation}\label{clean}\int_{-1}^0\left\|\nabla\l v_k^+\zeta\r\right\|^2_{2,B_1\times \{s\}}ds \leq C |M^+-k|^{2} A_{k;1}+Cr^\epsilon w^2 A_{k;1}^{q_1}.\end{equation}
\end{corollary}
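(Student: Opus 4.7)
The plan is to revisit the proof of Lemma~\ref{inte} in the $v_k^+$ case and, at each point where $w^2$ is produced on the right-hand side, use the extra hypotheses to replace it either by $|M^+-k|^2$ or by $r^\epsilon w^2$ with the measure exponent lowered to $q_1$. Throughout, I would drop the nonnegative term $\int |v_k^+\zeta|^2\,dx$ from the left since the corollary only asks for the gradient bound.

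First I would sharpen the $C_1^2$ and $C_2$ terms. The crude bound $\|v_k^+\|^2_{2,Q_1}\le w^2 A_{k;1}$ used in Lemma~\ref{inte} improves at once to $|M^+-k|^2 A_{k;1}$ by noting $v_k^+\le (M^+-k)_+$ on $Q_1$, which takes care of the $C_1^2$ piece. For the time-derivative piece $C_2 w^{\alpha}m^{-1}\iint_{Q_1}\bigl(\int_0^{v_k^+}(k+\xi)^{-\alpha}\xi\,d\xi\bigr)\zeta\,dx\,ds$, the original proof uses $(k+\xi)^{-\alpha}\xi\le(k+\xi)^{1/m}$ to get $\le w^{1/m}v_k^+$, losing a whole power of $v_k^+$. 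Instead, the hypothesis $k\ge cw$ gives $(k+\xi)^{-\alpha}\le (cw)^{-\alpha}$ for $\xi\ge 0$, whence $\int_0^{v_k^+}(k+\xi)^{-\alpha}\xi\,d\xi\le(cw)^{-\alpha}(v_k^+)^2/2$. Multiplying by the $w^{\alpha}$ prefactor cancels the $w$ dependence exactly and leaves the $C_2$ piece bounded by $C_2 c^{-\alpha}\iint_{Q_1}(v_k^+)^2\,dx\,ds\le C_2 c^{-\alpha}|M^+-k|^2 A_{k;1}$.

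Next I would handle the drift terms. The two assumptions $r^{\delta_1}w^{2/m}\le r^{\epsilon}w^2$ and $r^{\delta_2}w^{1+1/m}\le r^{\epsilon}w^2$ directly upgrade the prefactors of the drift terms to $r^{\epsilon}w^2$. To pass from $A_{k;q_i}^{q_i}$ to $A_{k;1}^{q_1}$, I apply Hölder on $[-1,0]$: for $q\in(0,1)$, $A_{k;q}^q=\int_{-1}^0|\{v>k\}|^q\,dt\le\bigl(\int_{-1}^0|\{v>k\}|\,dt\bigr)^q=A_{k;1}^q$. Since $A_{k;1}\le|Q_1|$ is bounded and $q_2\ge q_1$, the final estimate $A_{k;1}^{q_2}\le|Q_1|^{q_2-q_1}A_{k;1}^{q_1}$ absorbs both drift contributions into $C r^{\epsilon}w^2 A_{k;1}^{q_1}$.

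Putting the three refinements together recovers exactly $\int_{-1}^0\|\nabla(v_k^+\zeta)\|^2_{2,B_1\times\{s\}}\,ds\le C|M^+-k|^2 A_{k;1}+C r^{\epsilon}w^2 A_{k;1}^{q_1}$. The one genuinely delicate step is the $C_2$ estimate: without the hypothesis $k\ge cw$, the natural bound $F(v)\le w^{1/m}v_k^+$ only yields $C_2 w(M^+-k)A_{k;1}$, whose extra $w$ factor cannot be absorbed into $|M^+-k|^2 A_{k;1}$. The hypothesis $k\ge cw$ is precisely what permits $(k+\xi)^{-\alpha}$ to be dominated by a constant multiple of $w^{-\alpha}$, so that the $w^{\alpha}$ prefactor cancels cleanly and an extra power of $v_k^+$ is produced, matching the $|M^+-k|^2$ scaling demanded by the conclusion.
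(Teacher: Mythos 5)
Your proposal is correct and follows essentially the same route as the paper: you rework the proof of Lemma~\ref{inte} by replacing the bound in \eqref{know} with $\int_0^{v_k^+}(k+\xi)^{-\alpha}\xi\,d\xi\lesssim w^{-\alpha}|M^+-k|^2$ via $k\geq cw$, invoke the monotonicity $A_{k;q_1}\leq A_{k;q_2}\leq A_{k;1}$ (your Hölder-in-time step is exactly this), and use the hypotheses on $r,w$ to reduce the drift prefactors to $r^\epsilon w^2$. Your explicit treatment of the $C_1^2$ term and of absorbing $A_{k;1}^{q_2}$ into $A_{k;1}^{q_1}$ only spells out details the paper leaves implicit.
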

\begin{proof}
The proof follows from a straightforward modification of the one of Lemma \ref{inte}. First, by the assumptions we can replace the second and third inequalities in \eqref{know} by
\[ \int_0^{v_k^+}(k+\xi)^{-\alpha}\xi d\xi \leq k^{-\alpha}\int_0^{v_k^+}\xi d\xi\lesssim w^{-\alpha}(v_k^+)^2\lesssim w^{-\alpha}|M^+-k|^2.
\]
Second by H\"{o}lder's inequality it is not hard to see that $A_{k,q}$ is increasing in $q$ for $q\in (0,1]$ i.e. $A_{k;q_1}\leq A_{k;q_2}\leq A_{k;1}$.
With these two and the previous proof, we conclude with the clean expression \eqref{clean}.
\end{proof}

The first energy inequality in Lemma \ref{inte} will be used in Proposition \ref{prop1}. The second one will be used in Lemma \ref{claim4} and we will apply \eqref{clean} in Lemma \ref{claim3}.

\bigskip

Next we prove two propositions which regards oscillation reduction. The first one implies that under a suitable assumption the solution is bounded away from $0$ with certain amount. The other shows that if the assumption is not satisfied, then the supremum of the solution decreases once we look at a smaller parabolic neighborhood.


\begin{proposition}\label{prop1}
Let $p>d+\frac{4}{d+2}$, $\alpha=\frac{m-1}{m}$ and $\delta_0=\l1-\frac{1}{m}\r/\l1-\frac{d}{p}\r$. Suppose $\nu$ solves \eqref{eqcontv} in a neighbourhood of $Q(r,w^{-\alpha})$ for some $r,w>0$. Denote
$ M^-=\inf\left\{\nu, (x,t)\in Q(r,w^{-\alpha})\right\}$
and let us assume that
\begin{equation}\label{improvement}
w \geq osc_{Q(r,w^{-\alpha})}\nu;\text{ and }\;M^-\leq \frac{w}{4}.
\end{equation}  
Then there exists $c_0\in(0,1)$ that only depends on $m, p$ and $\|V\|_{L^p(Q(r,w^{-\alpha}))}$ such that the following holds: for all $0<r<w^{\delta_0}$ if 
\begin{equation}\label{lemcond}
meas\left\{(x,t)\in Q(r,w^{-\alpha}), \nu(x,t)\geq {M^-}+\frac{w}{2}\right\}\geq (1-c_0)|Q(r,w^{-\alpha})|,
\end{equation}
then 
\[\nu|_{Q(\frac{r}{2},{w}^{-\alpha})}\geq M^- + \frac{w}{4}.\]
\end{proposition}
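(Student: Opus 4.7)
The plan is to rescale to unit scale and run a De Giorgi iteration on sublevel sets of $v$. The hypothesis $r<w^{\delta_0}$ is the precise scale at which every drift contribution in Lemma~\ref{inte} becomes bounded by the diffusion scale $w^2$.

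\textbf{Step 1 (Rescaling).} Set $v(x,t):=\nu(rx,r^2w^{-\alpha}t)$, so $v$ solves \eqref{eqnu} on a neighborhood of $Q_1$. The hypotheses translate to $w\geq \mathrm{osc}_{Q_1} v$, $M^-\leq w/4$, and $|Q_1\cap \{v\geq M^-+w/2\}|\geq (1-c_0)|Q_1|$; the goal becomes $v\geq M^-+w/4$ on $Q_{1/2}$.

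\textbf{Step 2 (Drift control).} Since $\delta_2=1-d/p$ and $\delta_0=\alpha/\delta_2$, the hypothesis $r<w^{\delta_0}$ is equivalent to both $r^{\delta_2}w^{1+1/m}\leq w^2$ and $r^{\delta_1}w^{2/m}\leq w^2$. Applying the first energy inequality of Lemma~\ref{inte} with $k\in [M^-+w/4,\,M^-+w/2]$ and a smooth cutoff $\zeta$, and using Hölder in time to bound $B_{k;q}^q\lesssim Y_k^q$ (where $Y_k$ denotes the spacetime measure of $\{v<k\}$ in the support of $\zeta$), the estimate reduces to
\[
E_k:=\sup_t\int |v_k^-\zeta|^2\,dx+\iint |\nabla(v_k^-\zeta)|^2\,dx\,dt\lesssim (C_1^2+C_2)\,w^2\,\bigl(Y_k+Y_k^{q_1}+Y_k^{q_2}\bigr).
\]

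\textbf{Step 3 (Iteration).} Take $k_n:=M^-+w/4+w\cdot 2^{-(n+2)}$ and $r_n:=1/2+2^{-(n+1)}$, with cutoffs $\zeta_n$ supported in $Q(r_n,1)$, equal to $1$ on $Q(r_{n+1},1)$, and satisfying $|\nabla\zeta_n|\lesssim 2^n$, $|\partial_t\zeta_n|\lesssim 4^n$. Write $Y_n$ for the spacetime measure of $\{v<k_n\}\cap Q(r_n,1)$. The parabolic Sobolev inequality $\|f\|_{L^{2(d+2)/d}(Q_1)}^{2(d+2)/d}\lesssim \|\nabla f\|_{L^2}^2\|f\|_{L^\infty_t L^2_x}^{4/d}$ applied to $f=v_{k_n}^-\zeta_n$, combined with Chebyshev (on $Q(r_{n+1},1)\cap\{v<k_{n+1}\}$ one has $v_{k_n}^-\zeta_n\geq w\cdot 2^{-(n+3)}$), yields
\[
Y_{n+1}\leq C\, b^n\,\bigl(Y_n+Y_n^{q_1}+Y_n^{q_2}\bigr)^{(d+2)/d},
\]
in which every factor of $w$ cancels.

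\textbf{Step 4 (Closing the iteration — the main obstacle).} The standard De Giorgi lemma forces $Y_n\to 0$ once the controlling exponent on $Y_n$ exceeds $1$; with the crude treatment above, the binding exponent is $q_1(d+2)/d$, which exceeds $1$ only for $p>d+2$. Reaching the sharper threshold $p>d+\tfrac{4}{d+2}$ is the principal difficulty, and I expect to treat the two drift contributions asymmetrically: the $Y_n^{q_2}$ term is comfortable since $q_2(d+2)/d>1$ once $p>(d+2)/2$, while the $Y_n^{q_1}$ term must be absorbed by a two-sequence iteration (along the lines of the referenced Lemma~\ref{twoseq}) that splits the $L^\infty_t L^2_x$ and $L^2_t H^1_x$ parts of the energy in different proportions, trading off the gain coming from the two Sobolev endpoints. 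Given $Y_\infty=0$, the set $\{v<M^-+w/4\}\cap Q_{1/2}$ has zero measure; since on $\{v\geq M^-+w/4\}$ equation \eqref{eqnu} is uniformly parabolic with an $L^p$ drift, standard interior continuity upgrades this to the pointwise bound $v\geq M^-+w/4$ on $Q_{1/2}$, and undoing the rescaling finishes the proof.
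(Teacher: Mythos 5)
Your Steps 1--3 are essentially the paper's own argument: the same rescaling, the same use of the first energy inequality of Lemma~\ref{inte}, the same observation that $r<w^{\delta_0}$ is exactly what makes $r^{\delta_1}w^{2/m}\leq w^2$ and $r^{\delta_2}w^{1+1/m}\leq w^2$, and the same De Giorgi/Sobolev/Chebyshev recursion, which (as you correctly note) closes only when the binding exponent attached to the $q_1$-term exceeds one, i.e.\ only for $p>d+2$. Up to that point the proposal is sound.

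The gap is Step 4, which is precisely the content of the proposition in the claimed range $d+\tfrac{4}{d+2}<p\leq d+2$, and your proposal offers only an expectation rather than an argument. Two things are missing. First, the mechanism that makes the sharper threshold reachable is not a reweighting of the $L^\infty_tL^2_x$ versus $L^2_tH^1_x$ parts of the energy; it is the introduction of a \emph{second tracked quantity}, the $L^{q_1}$-in-time norm $\tilde{B}_{n;q_1}$ of the sublevel-set measures, for which one must derive its own recursion by a different route: the spatial embedding $\|f\|_{L^2(B)}\lesssim\|\nabla f\|_{L^{2d/(d+2)}(B)}$, H\"older in space against $\|\chi_{\{v_{k_n}^->0\}}\|_{L^d}$, and H\"older in time with exponent $\alpha=q_1$, leading to the coupled inequality \eqref{firstone} alongside \eqref{secondone}; only then does the two-sequence Lemma~\ref{twoseq} apply, and it is in its proof (iterated substitution producing the geometric series $\tfrac{2}{d+2}+\tfrac{2}{d}\tfrac{q_1}{1-q_1}>1$) that the threshold $p>d+\tfrac{4}{d+2}$ actually appears. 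None of this is set up in your proposal, and invoking Lemma~\ref{twoseq} is vacuous without inequalities of the form it requires. Second, your own Step 2 forecloses this route: by collapsing $B_{k;q}^q\lesssim Y_k^q$ you discard exactly the refined time-integrability information ($\tilde{B}_{n;q_1}$ can be much smaller than $\tilde{B}_{n;1}$, and it is $\tilde{B}_{n;q_1}^{q_1}$, not $Y_n^{q_1}$, that enters the energy bound \eqref{eqn:V10}) on which the two-sequence argument lives, so the "asymmetric treatment" cannot even be formulated in your setup without undoing that simplification. A minor remark on your conclusion: once the iteration gives $Y_n\to 0$ with $k_n\downarrow M^-+\tfrac{w}{4}$ and cylinders shrinking to $Q_{1/2}$, one directly gets $|\{v<M^-+\tfrac{w}{4}\}\cap Q_{1/2}|=0$, which is the assertion; the appeal to uniform parabolicity and interior continuity on the good set is unnecessary.
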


\begin{proof}

Recall that 
$v(x,t)$ defined in \eqref{alpha:rescale} satisfies \eqref{eqnu} in $Q_1$.
Set
\[r_n:=\frac{1}{2}+2^{-n},\; \tilde{Q}_n=Q({r_n},1),\;k_n:={M^-}+\frac{w}{4}+\frac{w}{2^{n+2}},\]
\[\tilde{B}_{n;q}=\l\int_{-r_n^2}^0\l\int_{B_{r_n}}\chi_{v(x,t)<k_n}dx\r^qdt\r^{\frac{1}{q}}.\]
Pick $\zeta_n\in C_0^\infty(\tilde{Q}_n\cup \left( \tilde{Q}_{n}+(0,2^{-n})\right))$ which equals its maximum $1$ in $\tilde{Q}_{n+1}$. Since $r_n^2-r_{n+1}^2\sim 2^{-n}$, we can assume
\[|\nabla\zeta_n|\lesssim 2^{n}, \quad|\Delta\zeta_n^2|\lesssim 4^{n}, \quad |\partial_t\zeta_n|\lesssim 2^{n}. \]

Recall the notation $v_{k}^-:=\max\left\{k-v,0\right\}$. By Lemma \ref{inte}, after integration we have
\begin{equation*}
\esssup_{-{r_{n+1}}^2\leq t\leq 0}\int_{B_{r_{n+1}}\times\left\{t\right\}}|{v_{k_n}^-}|^2 dx+\int_{-r_{n+1}}^t\left\|\nabla\l v_{k_n}^-\zeta_n\r\right\|^2_{2,B_{r_{n}}\times \left\{s\right\}}ds
\end{equation*} 
\begin{equation*}
\lesssim 4^n w^2\tilde{B}_{n;1}+
r^{\delta_1}w^\frac{2}{m}\tilde{B}_{n;q_1}^{q_1}+
2^n  r^{\delta_2}w^{1+\frac{1}{m}}\tilde{B}_{n;q_2}^{q_2}. \end{equation*}
Unravelling the definition and condition we have $r^{\delta_1}w^\frac{2}{m}\leq w^2, r^{\delta_2}w^{1+\frac{1}{m}}\leq w^2$.
Therefore if taking supremum of $t\in [-r_{n+1}^2,0]$ as well as $t=0$, we obtain 
$$
\left\|{v_{k_n}^-}\zeta_n\right\|^2_{V^{1,0}}:=\esssup_{-{r_{n+1}}^2\leq t\leq 0}\int_{B_{r_{n}}}|{v_{k_n}^-}\zeta_n|^2(\cdot,t) dx+\int_{-r^2_{n+1}}^0\left\|\nabla\l v_{k_n}^-\zeta_n\r\right\|^2_{2,B_{r_{n}}\times \{s\}}ds 
$$
\begin{equation}\label{eqn:V10}\lesssim 4^n w^2\tilde{B}_{n;1}+w^2 \tilde{B}_{n;q_1}^{q_1}+2^n w^2\tilde{B}_{n;q_2}^{q_2},\end{equation}
By Sobolev type embedding (see page 76 in \cite{lady}),
\[\left\| v_{k_n}^-\zeta_n\right\|^2_{L^2(B_{r_{n}}\times [{-r_{n+1}^2},0])} \lesssim \left\|{v_{k_n}^-}\zeta_n\right\|^2_{V^{1,0}}\times \tilde{B}_{n;1}^{\frac{2}{d+2}}.\]
So by \eqref{eqn:V10}, we get
\begin{equation}\label{3.112}
\left\| v_{k_n}^-\zeta_n\right\|^2_{L^2(B_{r_{n}}\times [{-r_{n+1}^2},0])} \lesssim \l4^n \tilde{B}_{n;1}+ \tilde{B}_{n;q_1}^{q_1}+2^n \tilde{B}_{n;q_2}^{q_2}\r w^2 \tilde{B}_{n;1}^{\frac{2}{d+2}}.
\end{equation}

\medskip

By definition, $v^-_{k_{n}}\geq \frac{w}{2^{n+3}}$ in $\tilde{Q}_{n+1}\cap \{v<k_{n+1}\}$. Then
$$
\left\| v_{k_n}^-\zeta_n\right\|^2_{L^2(B_{r_{n}}\times [{-r_{n+1}^2},0])}
\geq \iint_{\tilde{Q}_{n+1} \cap \{v_{k_{n+1}}^->0\}} \l w\;2^{-n-3}\r^2dx  dt \geq w^{2}2^{-2 n-6}\tilde{B}_{n+1;1}.$$
Putting above two computations together, we arrive at 
\begin{equation}\label{inequal}
\tilde{B}_{n+1;1}\lesssim 4^n\l4^n \tilde{B}_{n;1}+ \tilde{B}_{n;q_1}^{q_1}+2^n \tilde{B}_{n;q_2}^{q_2}\r \tilde{B}_{n;1}^{\frac{2}{d+2}}.
\end{equation}

First let us show the result when $p>d+2$. Notice the length of the time interval is bounded by $1$. By definition, $\tilde{B}_{n;q}$ is monotone in $q\in [0,1]$. In particular since $q_1<q_2<1$, 
\[\tilde{B}_{n;q_1}\leq \tilde{B}_{n;q_2}\leq \tilde{B}_{n;1}.\]
Also $\tilde{B}_{n;1}$ is obviously bounded, we have
\begin{equation}\label{iteration..}\tilde{B}_{n+1;1}\leq C16^n \tilde{B}_{n;1}^{q_1+\frac{2}{d+2}}.\end{equation}
Here $C$ is bounded by a universal constant and \[q_1+\frac{2}{d}=1-\frac{2}{p}+\frac{2}{d+2}\] 
is strictly greater than $1$ if $p>d+2$. Then iterating \eqref{iteration..} finishes the proof once the starting point (of the iteration) $\tilde{B}_{0,1}$ is small enough. And this is the same as $c_0$ in \eqref{lemcond} being small enough. For more details, we refer readers to Lemma 2.2 \cite{dibenedetto1}. The choice of $c_0$ only depends on $C,q_1+\frac{2}{d+2}$ in \eqref{inequal} (and of course on $m,V$) which is an universal constant independent of $w$.
\medskip

For $p\in (d+\frac{4}{d+2},d+2]$, we continue with the argument described before \eqref{3.112}. 
For any $\alpha\in(0,1)$, applying Sobolev embedding in space gives 
\[\int_{-r_{n+1}^2}^0 \left\|v_{k_n}^-\right\|^{2\alpha}_{L^2(B_{r_{n+1}})} dt\]
\begin{align*}&\leq \int_{-r_{n+1}^2}^0 \left\|v_{k_n}^-\zeta_n\right\|^{2\alpha}_{L^2({B_{r_{n}}})}  dt\leq C\int_{-r^2_{n+1}}^0 \left\|\nabla \l v_{k_n}^-\zeta_n\r\right\|_{L^\frac{2d}{d+2}(B_{r_{n}})}^{2\alpha} dt\\&
\leq C \int_{-r_{n+1}^2}^0
\left\|\chi_{v^-_{k_n}>0}\right\|^{2\alpha}_{L^{d}({B_{r_{n+1}}})}
\left\|\nabla \l v_{k_n}^-\zeta_n\r\right\|^{2\alpha}_{L^2({B_{r_{n+1}}})}
  dt\\
&\leq C \l \int_{-r_{n+1}^2}^0 \left\|\chi_{v^-_{k_n}>0}\right\|^{\frac{2\alpha}{1-\alpha}}_{L^{d}({B_{r_{n+1}}})}
dt\r^{1-\alpha} \l\int_{-r_{n+1}^2}^0 \left\|\nabla \l v_{k_n}^-\zeta_n\r\right\|^{2}_{L^2({B_{r_{n+1}}})} dt\r^\alpha. \end{align*}
Pick $\alpha=q_1=1-\frac{2}{p}$, then the above is bounded by 
\[
\lesssim \l\tilde{B}_{n;\frac{p-2}{d}}\r^{\frac{2\alpha}{d}}
\l\int_{-r_{n+1}^2}^0 \left\|\nabla \l v_{k_n}^-\zeta_n\r\right\|^{2}_{L^2({B_{r_{n+1}}})} dt\r^\alpha.
\]
By \eqref{eqn:V10} and monotonicity of $\tilde{B}_{n;q}$ in $q$, 
\[\lesssim \l\tilde{B}_{n;1}\r^{\frac{2\alpha}{d}}
\l 4^n w^2\tilde{B}_{n;1}+w^2 \tilde{B}_{n;q_1}^{q_1}+2^n w^2\tilde{B}_{n;q_2}^{q_2}\r^\alpha\]
\[\lesssim \l\tilde{B}_{n;1}\r^{\frac{2\alpha}{d}}
\l 4^n w^2\tilde{B}_{n;1}^{q_2}+w^2 \tilde{B}_{n;q_1}^{q_1}\r^\alpha.\]

On the other hand since $v^-_{k_{n+1}}\geq \frac{w}{2^{n+3}}$ in $\tilde{Q}_{n+1}\cap \{v>k_n\}$, we obtain
$$
w^{2\alpha}2^{-2\alpha n}\tilde{B}_{n+1;q_1}^{q_1} \leq \int_{-r_{n+1}^2}^0\l \int_{B_{r_{n+1}}\cap \{v_{k_{n+1}^-}>0\}} \l w\;2^{-n-3}\r^2dx\r^\alpha  dt
$$
$$
\leq \int_{-r_{n+1}^2}^0 \left\|v_{k_n}^-\right\|^{2\alpha}_{L^2(B_{r_{n+1}})} dt.
 $$

Putting together what we have,
\[w^{2\alpha}2^{-2\alpha n}\tilde{B}_{n+1;1-\frac{2}{d}}^{q_1}
\lesssim \l\tilde{B}_{n;1}\r^{\frac{2\alpha}{d}}\l4^n w^2\tilde{B}^{q_2}_{n;1}+w^2 \tilde{B}_{n;q_1}^{q_1}\r^\alpha\]
which simplifies to
\begin{equation}\label{firstone}\tilde{B}_{n+1;q_1}
\lesssim 16^n \tilde{B}_{n;1}^{\frac{2}{d}} \tilde{B}_{n;1}^{q_2}+4^n\tilde{B}_{n;1}^{\frac{2}{d}} \tilde{B}_{n;q_1}^{q_1}.\end{equation}

Here we have one inequality. By \eqref{inequal}
we proved
\[\tilde{B}_{n+1;1}\lesssim 4^n\l4^n \tilde{B}_{n;1}+ \tilde{B}_{n;q_1}^{q_1}+2^n \tilde{B}_{n;q_2}^{q_2}\r \tilde{B}_{n;1}^{\frac{2}{d+2}}.\]
By monotonicity,
\begin{equation}\label{secondone}\tilde{B}_{n+1;1}\lesssim  16^n \tilde{B}^{q_2+\frac{2}{d+2}}_{n;1}+ 4^n\tilde{B}_{n;q_1}^{q_1} \tilde{B}_{n;1}^{\frac{2}{d+2}}.\end{equation}
Let $a_n=\tilde{B}_{n;1},b_n=\tilde{B}_{n;q_1}$.
Then the proof is finished by applying Lemma \ref{twoseq} and using both \eqref{firstone} and \eqref{secondone}.
\end{proof}

We state the Lemma~\ref{twoseq}, whose proof will be given in the appendix.

\begin{lemma}\label{twoseq}
Suppose we have two sequences $\{a_n\},\{b_n\}$ such that 
\[1>a_0\geq b_0 \geq a_1\geq b_1 ...\geq a_n\geq b_n\geq 0,\]
and there exists a constant $C_1$ that
\begin{align*} &b_{n+1}\leq C_1^n a_{n}^{q_2+\frac{2}{d}}+C_1^n a_n^\frac{2}{d}b_n^{q_1};\\
&a_{n+1}\leq C_1^n a_n^{q_2+\frac{2}{d+2}}+C_1^nb_n^{q_1}a_n^\frac{2}{d+2}.\end{align*}
Then if $p>d+\frac{4}{d+2}$ and $a_0$ is small enough, 
\[\lim_{n\to\infty}a_n=\lim_{n\to\infty}b_n=0.\]
\end{lemma}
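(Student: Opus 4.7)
\medskip

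\noindent\textbf{Proof proposal.} My plan is to prove the result by induction with the two-rate geometric ansatz
\[ a_n \leq K\rho^n, \qquad b_n \leq K\sigma^n, \qquad 0<\sigma\leq \rho<1, \]
for constants $K,\rho,\sigma$ to be chosen depending on $C_1,p,d$. The reason for allowing two distinct rates is that the first recurrence (for $b_{n+1}$) has both exponents $q_2+\tfrac{2}{d}$ and $q_1+\tfrac{2}{d}$ strictly greater than $1$ (using only $p>d$), so $b_n$ can be forced to decay faster than $a_n$; this extra decay is then fed back into the $a_{n+1}$-recurrence through the term $b_n^{q_1}a_n^{2/(d+2)}$, whose total exponent $q_1+\tfrac{2}{d+2}$ is strictly less than $1$ when $p\leq d+2$, so a one-rate ansatz cannot possibly close.

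Substituting the ansatz into the two given recurrences and dividing, the inductive step $a_{n+1}\leq K\rho^{n+1}$, $b_{n+1}\leq K\sigma^{n+1}$ reduces to requiring each of the four quantities
\[ C_1\rho^{q_2+\frac{2}{d}}\sigma^{-1}, \quad C_1\rho^{\frac{2}{d}}\sigma^{q_1-1}, \quad C_1\rho^{q_2+\frac{2}{d+2}-1}, \quad C_1\rho^{-\frac{d}{d+2}}\sigma^{q_1} \]
to be at most $\tfrac{1}{2}$ (the pre-factor powers of $K$ are then absorbed by choosing $K$ small, exploiting the monotonicity $q_2+\tfrac{2}{d+2}-1>0$, which holds since $p>(d+2)/2$). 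The first and third conditions are satisfied by taking $\rho$ small. The remaining pair encodes the tension between $b$ decaying fast enough to feed the $a$-recurrence while not being forced to decay faster than the $b$-recurrence itself allows; together they read
\[ \sigma \geq (2C_1)^{p/2}\,\rho^{\,p/d}, \qquad \sigma \leq \bigl((2C_1)^{-1}\rho^{\,d/(d+2)}\bigr)^{1/q_1}. \]

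These two constraints admit a common solution $\sigma\in(0,\rho)$ precisely when the exponent gap
\[ \frac{p}{d}-\frac{d}{(d+2)q_1} \]
is strictly positive, because we can then shrink $\rho$ to absorb the $C_1$-powers. A short calculation using $q_1=1-\tfrac{2}{p}$ shows this gap equals $\bigl[p(d+2)q_1-d^2\bigr]/[d(d+2)q_1]$, and its numerator equals $(d+2)(p-2)-d^2$, so the gap is positive if and only if $p>2+\tfrac{d^2}{d+2}=d+\tfrac{4}{d+2}$, which is exactly our hypothesis. Fixing such a $\rho$, choosing $\sigma$ in the admissible window, then taking $K$ small enough to handle the $K$-factors and to ensure $a_0\leq K$ (which follows from the smallness hypothesis on $a_0$, together with $b_0\leq a_0$), the induction closes and both sequences decay geometrically to zero.

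\medskip

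\noindent\textbf{Main obstacle.} The essential difficulty is the precise accounting that identifies the threshold $p>d+\tfrac{4}{d+2}$ as the balance point at which the deficit $q_1+\tfrac{2}{d+2}<1$ in the $a_{n+1}$-recurrence can be compensated by the surplus $q_1+\tfrac{2}{d}>1$ in the $b_{n+1}$-recurrence. A naive one-rate iteration or a crude bound $b_n\leq a_n$ loses this balance and produces the weaker threshold $p>d+2$; the two-rate ansatz, and in particular the interplay between the $\rho$-exponents $p/d$ and $d/((d+2)q_1)$, is what yields the sharp range consistent with the exponent $\delta_0$ used in Proposition~\ref{prop1}.
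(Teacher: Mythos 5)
Your exponent bookkeeping is correct and identifies the right threshold: the comparison $p/d>d/\bigl((d+2)q_1\bigr)$ is indeed equivalent to $p>d+\frac{4}{d+2}$, which is exactly the balance the paper extracts (by a different route: the paper substitutes the $b$-recurrence into the $a$-recurrence finitely many times, arrives at the condition $\frac{2}{d+2}+\frac{2}{d}(q_1+q_1^2+\cdots)+q_1^{n+1}>1$, and then runs a DiBenedetto--Friedman fast-convergence iteration on $a_{N(k+1)}\leq C^k a_{Nk}^{1+\epsilon}$, where the factor $C^k$ is harmless). However, as written your induction does not close, and the problem is precisely the prefactor $K$. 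For the cross term $C_1^n b_n^{q_1}a_n^{2/(d+2)}$ the ansatz produces the factor $K^{q_1+\frac{2}{d+2}-1}$, and $q_1+\frac{2}{d+2}-1=\frac{2}{d+2}-\frac{2}{p}<0$ whenever $p<d+2$, i.e. in the entire interesting range $d+\frac{4}{d+2}<p<d+2$. So ``choosing $K$ small'' does not absorb this prefactor; it inflates it. Concretely, requiring the first term of the $a$-recurrence at $n=0$ to fit under $\tfrac12 K\rho$ forces $K^{q_2+\frac{2}{d+2}-1}\leq\rho/2$, hence $K<1$, while requiring the cross term at $n=0$ to fit under $\tfrac12 K\rho$ forces $K^{q_1+\frac{2}{d+2}-1}\leq\rho/2$, hence $K\geq(2/\rho)^{1/(1-q_1-\frac{2}{d+2})}>1$. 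These are incompatible no matter how small $a_0$ is, so the single-$K$, two-rate ansatz is self-contradictory exactly where the lemma is nontrivial.

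The gap is repairable within your strategy, but it needs a second degree of freedom: take $a_n\leq K_a\rho^n$ and $b_n\leq K_b\sigma^n$ with $K_b\ll K_a$. The four $n=0$ constraints then read $K_a^{q_2+\frac{2}{d+2}-1}\leq\rho/2$, $K_b^{q_1}\leq(\rho/2)K_a^{d/(d+2)}$, $K_b\geq(2/\sigma)K_a^{q_2+\frac{2}{d}}$ and $K_b^{2/p}\geq(2/\sigma)K_a^{2/d}$, and the window for $K_b$ is nonempty for $K_a$ small by the \emph{same} exponent comparison $p/d>d/\bigl((d+2)q_1\bigr)$, i.e. again under $p>d+\frac{4}{d+2}$; the smallness threshold on $a_0$ must then be taken as $K_b$ (you need $b_0\leq K_b$, and only $b_0\leq a_0$ is available). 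Two smaller points: the first per-step quantity $C_1\rho^{q_2+\frac{2}{d}}\sigma^{-1}\leq\tfrac12$ is not handled by ``$\rho$ small'' alone, since it is another lower bound on $\sigma$ (it does turn out to be compatible with the upper bound, but this should be checked); and once these constraints are stated, the choice of $\rho$, $\sigma$, $K_a$, $K_b$ must be made in the order $\rho$, then $\sigma$, then $K_a$, then $K_b$, so that each absorption of $C_1$-powers is legitimate. With these corrections your argument gives a genuinely different, self-contained proof of the lemma; without them, the induction step fails on the very term whose sublinearity you correctly identified as the source of the difficulty.
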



Now we proceed to the second proposition. 

\begin{proposition}\label{prop2}
Let $\nu$ and $c_0$ be as in Proposition \ref{prop1}.
Suppose \eqref{improvement} holds while \eqref{lemcond} is not satisfied. Then there exists universal constants $c_1,c_2$ which only depends on $c_0, p$ and $\|V\|_{L^p(Q_1)}$such that the following is true: 

For $r$ satisfying $r< c_1w^{c_2}$, there exists some $\eta\in(0,1)$ such that
\[\nu|_{Q\l\frac{r}{2},\frac{c_0}{2} w^{-\alpha}\r}\leq \eta w.\]
The constant $\eta$ is independent of $w$ which may depend on $c_0$.
\end{proposition}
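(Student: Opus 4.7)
My plan is a De Giorgi iteration on the ``bad'' set $\{v>k\}$ using the $v_k^+$ energy inequality \eqref{inteplus}, in the dual form of what was done in Proposition~\ref{prop1}. After the rescaling \eqref{alpha:rescale}, $v$ satisfies \eqref{eqnu} on $Q_1$, and the failure of \eqref{lemcond} reads
\[
|\{v\le M^-+w/2\}\cap Q_1|\ge c_0\,|Q_1|.
\]
Since $M^-\le w/4$, it is enough to show $v\le M^-+\eta' w$ on the rescaled smaller cylinder $B_{1/2}\times(-c_0/8,0)$ (which corresponds to $Q(r/2,(c_0/2)w^{-\alpha})$) for some $\eta'<3/4$: then $v\le\eta w$ with $\eta:=1/4+\eta'<1$. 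The obstacle is that the starting measure $|\{v>M^-+w/2\}\cap Q_1|\le(1-c_0)|Q_1|$ is far from the smallness threshold required by Lemma~\ref{twoseq}, so a preliminary measure-reduction step must be inserted before the main iteration can start.

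\textbf{Stage I: measure reduction.} By Fubini/pigeonhole on the hypothesis, I first pick a time $t_\star\in[-1,-c_0/4]$ with $|\{v(\cdot,t_\star)\le M^-+w/2\}\cap B_1|\ge (c_0/4)|B_1|$. I then propagate this good slice forward by a logarithmic test-function argument (a drift-corrected variant of DiBenedetto's classical logarithmic estimate): plugging $\Phi(v)=\log^+\bigl((\eta_0 w)/(k+\eta_0 w-v)\bigr)$ into \eqref{eqnu}, the drift-induced term $\iint v^{1/m}|\tilde V|\Phi'(v)|\nabla\Phi(v)|$ reduces via Cauchy--Schwarz to a multiple of $\iint v^{2/m}|\tilde V|^2\chi_{\{v>k\}}$, which is already estimated in Lemma~\ref{inte} by $r^{\delta_1}w^{2/m}A_{k;q_1}^{q_1}$. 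Using $r<c_1 w^{c_2}$ to turn this into an $r^\epsilon w^2$-perturbation (as in Corollary~\ref{lemmaremark}), I obtain for every $\eta_0\in(0,1/4)$ a uniform lower bound
\[
|\{v(\cdot,t)\le M^-+w/2-\eta_0 w\}\cap B_1|\ge \eta_1|B_1|,\qquad t\in[t_\star,0],
\]
with $\eta_1=\eta_1(c_0,\eta_0)>0$. Iterating this a finite $s_\star=s_\star(c_0)$ times on the levels $k_s=M^-+(1/2+\eta_0)w-\eta_0 w\,2^{-s}$ drives the upper-level measure below any preassigned $\delta_\star>0$.

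\textbf{Stage II: De Giorgi iteration.} Fix $\eta_0<1/4$, and consider shrinking cylinders $Q_n=B_{r_n}\times(-(c_0/2)r_n^2,0)$ with $r_n=1/2+2^{-n-1}$ and levels $\tilde k_n:=M^-+(1/2+\eta_0)w-\eta_0 w\cdot 2^{-n}$. By Stage~I, $|\{v>\tilde k_0\}\cap Q_0|\le\delta_\star|Q_0|$. Applying \eqref{inteplus} with cut-offs $\zeta_n$ satisfying $|\nabla\zeta_n|\lesssim 2^n$ and $|\partial_t\zeta_n|\lesssim c_0^{-1}\,2^n$, and combining with the Sobolev embedding exactly as in the proof of Proposition~\ref{prop1}, I obtain the same coupled recursions for $a_n:=|\{v>\tilde k_n\}\cap Q_n|$ and $b_n:=A_{\tilde k_n;q_1}^{q_1}$ that appear in Lemma~\ref{twoseq}: the hypothesis $p>d+\tfrac{4}{d+2}$ supplies the correct exponents, and $r<c_1 w^{c_2}$ absorbs all drift contributions as lower-order $r^\epsilon w^2$-terms via Corollary~\ref{lemmaremark}. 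Provided $\delta_\star$ is chosen small enough depending on $c_0,m,p$, Lemma~\ref{twoseq} yields $a_n\to 0$, hence $v\le M^-+(1/2+\eta_0)w$ on $B_{1/2}\times(-c_0/8,0)$, which proves the claim with $\eta:=3/4+\eta_0<1$.

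\textbf{Main obstacle.} The delicate piece is Stage~I: converting a single-cylinder measure bound into a time-uniform spatial measure bound at a slightly improved level. The logarithmic estimate is a direct analogue of DiBenedetto's for the porous medium equation, but the drift term $r\nabla\cdot(v^{1/m}\tilde V)$ must be absorbed into the dissipative side. The decisive quantity is $\iint v^{2/m}|\tilde V|^2$, which scales as $r^{\delta_1}w^{2/m}$; the restriction $r<c_1 w^{c_2}$ in the statement is precisely what makes this a lower-order perturbation of the $w^2$-scale of the energy identity, at the cost of tying the admissible scales $r$ to the oscillation $w$. The dependence $\eta=\eta(c_0)$ reflects that both the number of iterations $s_\star$ and the threshold $\delta_\star$ of Stage~I are functions of $c_0$.
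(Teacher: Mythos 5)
Your overall skeleton (pigeonhole in time, a drift-corrected logarithmic estimate to propagate a good slice forward, then a De Giorgi iteration closed by Lemma~\ref{twoseq}, with $r<c_1w^{c_2}$ absorbing the drift terms as in Corollary~\ref{lemmaremark}) is the same as the paper's (Lemmas~\ref{claim1}--\ref{claim4}), but your Stage~I makes a claim that is false, and the proof collapses there. Two concrete problems. First, the logarithmic estimate moves truncation levels \emph{toward the supremum}: from "$\{v\le k\}$ has measure $\ge\theta|B_1|$ at one time $t_\star$" it yields, for all later times, that $\{v> M^+-2^{-q}(M^+-k)\}$ does not fill $B_1$ — equivalently a lower bound on the measure of the sub-level set at a \emph{higher} level. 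You instead assert a time-uniform lower bound on $|\{v(\cdot,t)\le M^-+w/2-\eta_0w\}|$, i.e.\ at a \emph{lower} level; no energy or log estimate can force the solution to dip below a lower level at later times (the solution may simply rise above $M^-+w/2-\eta_0 w$ everywhere after $t_\star$). Second, and more fatally, Stage~I's output "$|\{v>\tilde k_0\}\cap Q_0|\le\delta_\star|Q_0|$" with $\tilde k_0=M^-+w/2$ is unobtainable: the failure of \eqref{lemcond} only says the sub-level set $\{v< M^-+w/2\}$ has measure $\ge c_0|Q_1|$ with $c_0$ \emph{small}, so $\{v>M^-+w/2\}$ may occupy up to a $(1-c_0)$ fraction of $Q_1$, and (already for $V=0$) the solution can sit near $M^+\approx w$ on essentially all of $Q(\tfrac12,\tfrac{c_0}{2})$ while a dip of measure $c_0|Q_1|$ lives near the corner $t\approx-1$. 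In that situation your claimed conclusion $v\le M^-+(\tfrac12+\eta_0)w\le(\tfrac34+\eta_0)w$ on the small cylinder is simply false, which shows the gap is not a technicality of the drift term but of the level bookkeeping: no iteration at levels confined to the band $[M^-+\tfrac w2,\,M^-+(\tfrac12+\eta_0)w]$ can shrink the super-level measure, because all those super-level sets contain the large region where $v\approx M^+$.

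The repair is exactly the paper's route: work from above. After the time-slice selection (your pigeonhole is fine, cf.\ Lemma~\ref{claim1} with $k_1=M^+-c_0w\ge M^-+\tfrac w2$ thanks to \eqref{extracond}), the log estimate (Lemma~\ref{claim2}) gives, uniformly for $t\in[-\tfrac{c_0}{2},0]$, that $|A_{k_2}(t)|\le(1-\tfrac{c_0^2}{4})|B_1|$ at the \emph{raised} level $k_2=M^+-c_02^{-q}w$; then the De Giorgi isoperimetric inequality \eqref{101} with levels $k_s=M^+-c_02^{-s}w$ marching toward $M^+$ (Lemma~\ref{claim3}, using \eqref{clean} and the boundedness of $\sum_s|A^\zeta_{k_s,c_0}\setminus A^\zeta_{k_{s+1},c_0}|$) produces smallness $\gamma$ of the super-level measure only at a level $M^+-c_02^{-p_0}w$ with $p_0=p_0(\gamma,c_0)$; finally the Lemma~\ref{twoseq}-type iteration (Lemma~\ref{claim4}) yields $v\le M^+-c_02^{-p_0-1}w$, i.e.\ $\eta=1-c_02^{-p_0-1}$, which is necessarily close to $1$ — in contrast to the $\eta=\tfrac34+\eta_0$ your scheme targets. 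Your treatment of the drift (absorbing $\iint v^{2/m}|\tilde V|^2$ via \eqref{lpV} and the restriction $r<c_1w^{c_2}$) is consistent with what the paper does inside Lemmas~\ref{claim2}--\ref{claim4}, but it cannot rescue Stage~I.
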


The proof of the proposition rests on a number of lemmas which are variants of Lemma 6.1, 6.2, 6.3, 6.4 in \cite{dibenedetto1}. We will sketch the proof for some lemmas and emphasize on the differences.

\medskip

Lemma \ref{claim1}--Lemma \ref{claim4} stated below are proven under the conditions of Proposition \ref{prop2} and, for $c_0$ given in Proposition~\ref{prop1} we have 
\begin{equation}\label{extracond}M^+-M^-\geq \frac{w}{2}+c_0w,
\end{equation}
where $M^+$ and $M^-$ denote respectively the supremum and infimum of $\nu$  in $Q (r,w^{-\alpha})$. 
\medskip
Let $v(x,t)$ be as given in \eqref{alpha:rescale} and define

\begin{equation}\label{notation}A_{k,R}(t):=\left\{x\in B_R: v(x,t)>k\right\}.\end{equation}
We denote $A_k(t) := A_{k,1}(t)$.  

\begin{lemma}\label{claim1}Let $k_1=M^+-c_0w$. There exists $\tau\in (-1,-\frac{c_0}{2})$ such that
\[|A_{k_1}(\tau)|\leq (1-{c_0})\l 1-\frac{c_0}{2}\r^{-1}|B_1|.\]
\end{lemma}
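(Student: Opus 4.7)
This lemma is a time-slicing statement: I need to locate one time $\tau$ in the middle portion of the cylinder on which the distribution function $|A_{k_1}(\tau)|$ is not too large. The proof is a continuous pigeonhole argument by contradiction; no PDE is needed. The only PDE input is indirect, through the failure of \eqref{lemcond} combined with the extra condition \eqref{extracond}, which together pin down that $\{v > k_1\}$ occupies only a small fraction of $Q_1$.

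First I would pass to the rescaled function $v$ on $Q_1$ defined in \eqref{alpha:rescale}, noting $|Q_1|=|B_1|$. Since $\nu$ and $v$ differ only by a change of variables, the negation of \eqref{lemcond} translates to
\begin{equation*}
|\{(x,t)\in Q_1:\ v(x,t)\geq M^-+w/2\}| < (1-c_0)\,|B_1|.
\end{equation*}
The hypothesis \eqref{extracond} gives $M^+-M^-\geq w/2+c_0 w$, so $k_1=M^+-c_0 w\geq M^-+w/2$. Therefore $\{v>k_1\}\subseteq\{v\geq M^-+w/2\}$, and by Fubini,
\begin{equation*}
\int_{-1}^{0}|A_{k_1}(t)|\,dt \;=\; |\{v>k_1\}\cap Q_1| \;<\; (1-c_0)\,|B_1|.
\end{equation*}

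Now suppose, toward a contradiction, that $|A_{k_1}(\tau)|>(1-c_0)(1-c_0/2)^{-1}|B_1|$ for every $\tau\in(-1,-c_0/2)$. The interval $(-1,-c_0/2)$ has length $1-c_0/2$, so integrating over it gives
\begin{equation*}
\int_{-1}^{-c_0/2}|A_{k_1}(t)|\,dt \;>\; (1-c_0)(1-c_0/2)^{-1}|B_1|\cdot(1-c_0/2) \;=\; (1-c_0)\,|B_1|,
\end{equation*}
which, together with the nonnegativity of the integrand, contradicts the previous display. The only point requiring care is the alignment of the threshold $k_1$ with $M^-+w/2$ via \eqref{extracond}; otherwise there is no real obstacle, and the lemma follows.
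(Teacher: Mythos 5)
Your proof is correct and is essentially the paper's argument: both use \eqref{extracond} to get $k_1\geq M^-+\frac{w}{2}$ and then a pigeonhole-by-contradiction over the time interval $(-1,-\frac{c_0}{2})$, whose integrated bound would force \eqref{lemcond} to hold, contradicting the hypothesis of Proposition~\ref{prop2}. The only difference is cosmetic (you state the failed-\eqref{lemcond} integral bound first and then integrate the contradiction hypothesis, while the paper chains the two inequalities in one display).
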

\begin{proof}
Observe that, by \eqref{extracond}, $k_1\geq M^-+\frac{w}{2}$. If the claim is false, 
\begin{align*}meas\left\{(x,t): |x|\leq 1, t\in (-1,-\frac{c_0}{2}),  v>M^-+\frac{w}{2}\right\}\geq \int_{-1}^{-\frac{c_0}{2}} |A_{k_1}(t)|dt
>(1-{c_0})|B_1|
\end{align*}
which agrees with \eqref{lemcond} and thus contradicts with the condition of Proposition~\ref{prop2}.
\end{proof}

\begin{lemma}\label{claim2}  Let $c_0$ as given in Proposition~\ref{prop1}, $M^+$ in \eqref{extracond} and $A_k(t)$ in \eqref{notation}. There exist universal constants $c_1,c_2>0$ and a sufficiently large positive integer $q=q(c_0)$ which is independent of $w$ such that if $r<c_1w^{c_2}$ then for $k_2 = M^+ - \frac{c_0}{2^q}w$ we have 
\[|A_{k_2}(t)|\leq \l 1-\frac{c_0^2}{4}\r|B_1| \hbox{ for } t\in [-\frac{c_0}{2},0].
\]
\end{lemma}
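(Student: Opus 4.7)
The goal is to upgrade the single-time information from Lemma~\ref{claim1} (smallness at $t = \tau$) to a uniform in time bound on $[-c_0/2, 0]$, at the expense of raising the threshold from $k_1 = M^+ - c_0 w$ to $k_2 = M^+ - (c_0/2^q)w$. I would follow the strategy of Lemma~6.2 in \cite{dibenedetto1}, namely test the rescaled equation \eqref{eqnu} against a truncated logarithmic function of $v$. The effect of the additional drift is absorbed into the diffusion term under the smallness condition $r < c_1 w^{c_2}$, with $c_2$ dictated by the scaling in \eqref{lpV}.

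Concretely, for each integer $n \geq 1$ I would define
\[
\Psi_n(v) := \Bigl[\log \frac{c_0 w}{c_0 w - (v - k_1)_+ + 2^{-n} c_0 w}\Bigr]_+,
\]
which vanishes where $v \leq k_1$, is bounded above by $(n+1)\log 2$, and satisfies $\Psi_n \geq (n-1)\log 2$ on $\{v \geq M^+ - c_0 w/2^n\}$. Take a spatial cutoff $\zeta = \zeta(x) \in C_0^\infty(B_1)$ with $\zeta \equiv 1$ on $B_{1-\sigma}$ and $|\nabla \zeta| \lesssim \sigma^{-1}$ (for $\sigma$ small, to be chosen in terms of $c_0$), and test \eqref{eqnu} by $\Psi_n (\Psi_n)' \zeta^2$ on $B_1 \times [\tau, t]$. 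The convexity identity $(\Psi_n^2)'' \geq 2(\Psi_n')^2$ produces a positive Dirichlet-type term $\int (\Psi_n')^2 |\nabla v|^2 \zeta^2$, while the initial slice contributes $\int_{B_1} \Psi_n^2(v(\tau)) dx$, which is already controlled by $(n+1)^2 (\log 2)^2 |A_{k_1}(\tau)|$ and hence by Lemma~\ref{claim1}.

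The delicate step is the drift contribution $r \int v^{1/m} \tilde V \cdot \nabla(\Psi_n (\Psi_n)' \zeta^2)$. Expanding the gradient and using $v \lesssim w$ on $\mathrm{supp}\,\Psi_n'$ together with $\|\tilde V\|_p \lesssim r^{-d/p}$ from \eqref{lpV}, H\"older's inequality in the pair $(p, q_2)$ controls it by the product of a small factor $r^{1 - d/p} w^{\kappa(m,p,d)}$ and either the Dirichlet term above or a lower order combination of measure terms. This small factor is absorbed exactly when $r < c_1 w^{c_2}$ with $c_2 = c_2(m, p, d)$ and $c_1 = c_1(m, p, d, \|V\|_{L^p(Q_1)})$. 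Integrating in time then yields, uniformly for $t \in [-c_0/2, 0]$,
\[
\int_{B_{1-\sigma}} \Psi_n^2(v(t)) \, dx \leq (n+1)^2(\log 2)^2 |A_{k_1}(\tau)| + C \sigma^{-2}.
\]

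Finally, setting $n = q$ and restricting the left-hand side to $\{v(t) > k_2\}$ where $\Psi_q \geq (q-2)\log 2$, I obtain
\[
|A_{k_2}(t) \cap B_{1-\sigma}| \leq \frac{(q+1)^2}{(q-2)^2}\, |A_{k_1}(\tau)| + \frac{C \sigma^{-2}}{(q-2)^2(\log 2)^2}.
\]
Using Lemma~\ref{claim1} and adding $|B_1 \setminus B_{1-\sigma}|$, I first pick $\sigma$ small so that $|B_1 \setminus B_{1-\sigma}| < c_0^2|B_1|/8$, and then $q$ large (depending only on $c_0$) to force the right-hand side below $(1 - c_0^2/4)|B_1|$. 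The main obstacle is exactly the drift estimate: in the classical $V = 0$ case this logarithmic propagation is essentially automatic once the test function is in place, but here the need to strictly subordinate the drift to the diffusion is what rigidly fixes the regime $r < c_1 w^{c_2}$ and sets the stage for the subsequent De~Giorgi iteration.
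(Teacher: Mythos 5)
Your proposal is correct and follows essentially the same route as the paper: the truncated logarithmic test function of DiBenedetto--Friedman type with spatial cutoff, initialized at the slice $t=\tau$ from Lemma~\ref{claim1}, with the drift contribution estimated via \eqref{lpV} and absorbed under $r<c_1w^{c_2}$, and the final choice of cutoff width and then $q$ large to land below $(1-\frac{c_0^2}{4})|B_1|$. The only cosmetic difference is that the paper tests with $mv^{\alpha}(\psi^2)'\zeta^2$ so that the term $w^{\alpha}(v^{1/m})_t$ integrates exactly to $\psi^2$ at the time slices, whereas your $\Psi_n\Psi_n'\zeta^2$ requires the (harmless, since $v\sim w$ on $\{v>k_1\}$ by \eqref{extracond}) observation that the resulting primitive is comparable to $\Psi_n^2$.
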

\begin{proof}

Without loss of generality we may assume that $c_0<1$. We follow the outline of the proof  for Lemma 6.2  in \cite{dibenedetto1}. The additional ingredient is that we need to consider the effect of the drift term and give a clear description of how small $r$ need to be. For $q>3$, consider
\[\psi(x)=\log^+\left(\frac{c_0 w}{c_0w-(x-(M^+-c_0w))^++\frac{c_0 w}{2^q}}\right).\]
Then 
\begin{equation}\label{bounds00}
0\leq \psi(x)\leq q\log 2,\hbox{ and }  \psi'(x)\in\left[0,\frac{2^q}{c_0w}\right]\ \hbox{ for }x\in [0,M^+].
\end{equation}
Let $\zeta$ be a cutoff function in $B_1$ that
\[\zeta=1 \text{ in }B_{1-\lambda },\; \zeta\in [0,1],\;  |\nabla\zeta|\leq \frac{2}{\lambda}\]
where $\lambda\in (0,1)$ is to be determined.

\medskip

 Consider $\phi=(\psi^2)'(v)\zeta^2(x)$. Let $\tau$ be from Lemma \ref{claim1} and set $Q^\tau:=B_1\times [\tau,t]$. By calculating $\iint_{Q^\tau}mv^\alpha v_t (\psi^2)'\zeta^2dxdt$ and using equation \eqref{eqnu}, we find
\[w^\alpha\iint_{Q^\tau}\partial_t\phi dxdt=-\iint_{Q^\tau} m\left(\nabla  v+r v^\frac{1}{m}{\tilde{V}}\right)\left(\nabla ( v^\alpha(\psi^2)'\zeta^2)\right)dxdt.\]
Notice that $(\psi^2(t))''=2(1+\psi(t))(\psi'(t))^2$. So
\[w^\alpha\int_{B_1\times\{t\}}\psi^2( v)\zeta^2dx-w^\alpha\int_{B_1\times\{\tau\}}\psi^2( v)\zeta^2 dx\]
\[+\underbrace{\iint_{Q^\tau} v^{-\frac{1}{m}}(\psi^2)'|\nabla  v|^2\zeta^2dxdt+\iint_{Q^\tau}  v^\alpha(1+\psi)|\psi'|^2|\nabla  v|^2\zeta}_{X_1:=}\]
\[\lesssim_m \underbrace{\iint_{Q^\tau}|\nabla  v\; \psi'\psi^\frac{1}{2} v^\frac{\alpha}{2}| v^\frac{\alpha}{2}\psi^\frac{1}{2}\zeta\lambda^{-1}}_{X_2:=}+\underbrace{r\iint_{Q^\tau}|\nabla( v^\alpha(\psi^2)'\zeta^2)|\cdot|{\tilde{V}} v^\frac{1}{m}|dxdt}_{X_3:=}.\]
Since $ v\leq M^+\sim w$, $\psi\lesssim q$,
\[X_2\leq C w^\alpha q\lambda^{-2}+o(1) X_1.\]
From the H\"{o}lder inequality and the fact that $|\nabla\zeta|\lesssim \lambda^{-1},\zeta\in [0,1]$,
\begin{align*}
X_3 &\leq  Cr\iint_{Q^\tau}\left(|\nabla v\; \psi^\frac{1}{2}(\psi')^\frac{1}{2}\zeta|\psi^\frac{1}{2}(\psi')^\frac{1}{2}|{\tilde{V}}|\zeta+ v(1+\psi)|\psi'|^2|\nabla v||{\tilde{V}}|\zeta^2+\lambda^{-1} v\psi\psi'|{\tilde{V}}|\zeta\right)dxdt \\ 
&\lesssim  o(1)X_1+r^2\iint_{Q^\tau}\l v^\frac{1}{m}\psi\psi'+ v^{2-\alpha}(1+\psi)|\psi'|^2\r|{\tilde{V}}|^2dxdt+r\iint_{Q^\tau}\lambda^{-1} v\psi\psi'|{\tilde{V}}|dxdt.
\end{align*}
Recall \eqref{bounds00} and that $ v\lesssim w\lesssim 1$. Hence we obtain 
\[X_3\lesssim  o(1)X_1+({4^q qr^2}/{c^2_0 w^\alpha})\iint_{Q^\tau}|{\tilde{V}}|^2dxdt+({2^q q r}/{c_0 \lambda}) \iint_{Q^\tau}|{\tilde{V}}|dxdt \]
Now by \eqref{lpV}
\[X_3\lesssim  o(1)X_1+({4^q qr^{\delta_1}}/{c^2_0 w^\alpha})+({2^q q r^{\delta_2}}/{c_0 \lambda}) .
\]

Let $A_{k,R}(t)$ be as given in \eqref{notation}. Computations in the proof of Lemma 6.2 \cite{dibenedetto1} yield
\begin{align*}\int_{B_1\times\{t\}}\psi^2( v)\zeta^2dx &\geq ((q-1)\log 2)^2|A_{k_2,1-\lambda}(t)|,\\ 
\int_{B_1\times\{\tau\}}\psi^2( v)dx & \leq (q\log 2)^2|A_{k_1}(\tau)|\end{align*}
where $k_1$ is as defined in Lemma \ref{claim1}. From the above,
\[
|A_{k_2,1-\lambda}(t)|\leq \l\frac{q}{q-1}\r^2 |A_{k_1}(\tau)|+\frac{Cq}{\lambda^2(q-1)^2}+
\frac{C4^q  r^{\delta_1}q}{c^2_0 (q-1)^2 w^{2\alpha}}+\frac{C2^q  r^{\delta_2}q}{c_0 \lambda (q-1)^2w^\alpha  }.
\]
And we have
\[|A_{k_2,1-\lambda}(t)|\geq |A_{k_2}(t)|-|B_1\backslash B_{1-\lambda}|\geq  |A_{k_2}(t)|-Cd\lambda|B_1|.\]
By Lemma \ref{claim1} and $q\geq 3$, we obtain
\begin{equation}\label{rhs}
|A_{k_2}(t)|\leq 
\left(\l\frac{q}{q-1}\r^2\frac{1-c_0}{1-\frac{1}{2}c_0}+C_0d\lambda\right)|B_1|+C_1\l\frac{1}{\lambda^2q}+
\frac{4^q  r^{\delta_1}}{c^2_0 q w^{2\alpha}}+\frac{2^q  r^{\delta_2}}{c_0 q\lambda w^\alpha  }\r,\end{equation}
where $C_0$ and $C_1$ are universal constants.

Let us now choose $\lambda$ and $q$ such that 
\[\lambda:= \frac{1}{4C_0d}c_0^2, \quad  \l\frac{q}{q-1}\r^2\leq \l 1-\frac{c_0}{2}\r\l 1+c_0\r,\; \frac{C_1}{\lambda q}\leq \frac{1}{4}c_0^2|B_1|.\] 
It is possible to choose such $q$ since for $c_0$ small, $(1-\frac{c_0}{2})(1+c_0)>1$. Due to the drift term we require \[({4^q r^{\delta_1}}/{c^2_0 w^{2\alpha}}+{2^q r^{\delta_2}}/{c_0 \lambda w^\alpha}) \lesssim \frac{1}{4}q c_0^2|B_1|.\] Since $c_0$ is fixed and $\lambda(c_0),q(c_0)$ are fixed, this condition is equivalent to $r\leq c_1w^{c_2}$ for some fixed $c_1(c_0),c_2(c_0)>0$.

\medskip

Finally we can conclude with the right hand side of \eqref{rhs} $\leq (1-(\frac{c_0}{2})^2)|B_1|$.

\end{proof}

\begin{lemma}\label{claim3} Let $q$  be as given in Lemma~\ref{claim2}. Then for any $\gamma\in (0,1)$ there exists $c(\gamma,c_0,q)>0$ and $p_0(\gamma,c_0,q)>q$  such that the following holds:  if $r$ satisfies the assumption given in Lemma~\ref{claim2} and further satisfies $r \leq c$, then 
\[\left|\left\{ (x,t)\in Q\l 1,\frac{c_0}{2} \r,  v>M^+-\frac{c_0}{2^{p_0}}w\right\}\right|\leq \gamma\left|Q(1,\frac{c_0}{2} )\right|. \]
\end{lemma}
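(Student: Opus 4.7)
The plan is to iterate De~Giorgi's isoperimetric lemma across the nested levels $k_s := M^+ - \frac{c_0}{2^s}w$ for $s = q, q+1, \dots, p_0$, using Lemma~\ref{claim2} as the base measure estimate and the clean energy inequality \eqref{clean} from Corollary~\ref{lemmaremark} to control the Dirichlet integral on each annular level set. Set $Y_s := \int_{-c_0/2}^0 |A_{k_s}(t)|\,dt$ in the notation \eqref{notation}; the goal becomes $Y_{p_0} \leq \gamma\,|Q(1,c_0/2)|$ for $p_0 = p_0(\gamma,c_0,q)$ chosen at the end.

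First I would verify that Corollary~\ref{lemmaremark} applies at each level $k_s$ with $s \geq q$. Since $M^+ \geq M^- + \frac{w}{2} + c_0 w \geq \frac{w}{2}$ by \eqref{extracond} and $M^- \geq 0$, the levels satisfy $k_s \geq \frac{w}{2} - \frac{c_0 w}{2^q} \geq \frac{w}{4}$ for $c_0$ small. The auxiliary smallness constraints $r^{\delta_1}w^{2/m} \leq r^\epsilon w^2$ and $r^{\delta_2}w^{1+1/m} \leq r^\epsilon w^2$ can be arranged by further shrinking the threshold $r < c_1 w^{c_2}$ inherited from Lemma~\ref{claim2}. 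With a spatial cutoff $\zeta$ supported in $B_1 \times (-1,0]$ that is identically $1$ on $B_1 \times [-c_0/2, 0]$, inequality \eqref{clean} yields
\[
\int_{-c_0/2}^{0}\!\!\int_{B_1}|\nabla v_{k_s}^+|^2 \,dx\,dt \;\lesssim\; \frac{c_0^2 w^2}{4^s}\, Y_s + r^\epsilon w^2\, Y_s^{q_1}.
\]

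Next I would apply De~Giorgi's isoperimetric inequality pointwise in $t$: combined with $|B_1\setminus A_{k_s}(t)| \geq \frac{c_0^2}{4}|B_1|$ from Lemma~\ref{claim2}, it gives
\[
(k_{s+1}-k_s)\,|A_{k_{s+1}}(t)| \;\leq\; \frac{C}{c_0^2}\int_{A_{k_s}(t)\setminus A_{k_{s+1}}(t)} |\nabla v(\cdot,t)|\,dx.
\]
Integrating in $t \in [-c_0/2, 0]$, applying Cauchy--Schwarz with $k_{s+1}-k_s = \frac{c_0 w}{2^{s+1}}$, squaring, and inserting the preceding energy bound produces
\[
Y_{s+1}^2 \;\leq\; \frac{C}{c_0^4}\, Y_s\,(Y_s - Y_{s+1}) + \frac{C\,4^{s+1}}{c_0^6}\, r^\epsilon\, Y_s^{q_1}\,(Y_s - Y_{s+1}).
\]
Using $Y_s \leq |Q(1,c_0/2)| \lesssim c_0$ universally and choosing $r$ small enough (depending on the eventual $p_0$) so that $\tfrac{4^{p_0+1}}{c_0^6}\,r^\epsilon \leq 1$, the recursion collapses to
\[
Y_{s+1}^2 \;\leq\; \frac{C_\star}{c_0^3}\,(Y_s - Y_{s+1}) \qquad \text{for } s = q, q+1, \dots, p_0-1.
\]

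Finally, telescoping and using $Y_{p_0} \leq Y_{s+1}$ for each such $s$ yields
\[
(p_0-q)\,Y_{p_0}^2 \;\leq\; \sum_{s=q}^{p_0-1} Y_{s+1}^2 \;\leq\; \frac{C_\star}{c_0^3}\,Y_q \;\leq\; \frac{C'}{c_0^2},
\]
so $Y_{p_0} \leq C''\,c_0^{-3/2}(p_0-q)^{-1/2}$, and picking $p_0 - q \geq C''' c_0^{-5}\gamma^{-2}$ delivers $Y_{p_0} \leq \gamma\,|Q(1,c_0/2)|$. The main technical hazard is the drift-induced contribution $r^\epsilon w^2 Y_s^{q_1}$ in the energy estimate: since $q_1 < 1$, this term could in principle destroy the geometric decay of the recursion, and the resolution is precisely to let the $r$-threshold $c(\gamma,c_0,q)$ depend on $p_0$ (hence on $\gamma$), which is the origin of the extra smallness hypothesis $r \leq c$ beyond the one inherited from Lemma~\ref{claim2}.
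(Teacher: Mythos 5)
Your overall strategy is the same as the paper's: run De~Giorgi's isoperimetric inequality across the levels $k_s=M^+-\frac{c_0}{2^s}w$ for $q\le s\le p_0$, feed in the measure bound of Lemma~\ref{claim2} for the complement, control the Dirichlet term by \eqref{clean}, neutralize the drift contribution by demanding $4^{p_0}r^{\epsilon}\lesssim c_0^2$ (so the threshold $c$ depends on $p_0$, hence on $\gamma$), and take $p_0$ large. Your closure by telescoping $\sum_{s}Y_{s+1}^2$ is a harmless variant of the paper's pigeonhole selection of a single good level $s_0$ with small $|A^{\zeta}_{k_{s_0},c_0}\setminus A^{\zeta}_{k_{s_0+1},c_0}|$; the arithmetic and the choice $p_0-q\gtrsim c_0^{-5}\gamma^{-2}$ are consistent. (A cosmetic point: \eqref{clean} produces the factor $A_{k;1}$, taken over $[-1,0]$ and the support of the cutoff, not your $Y_s$; since you only use $Y_s\lesssim c_0$ afterwards this does not affect the recursion.)

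There is, however, one genuine gap: the cutoff you invoke does not exist within the framework you are citing. Corollary~\ref{lemmaremark} (via Lemma~\ref{inte}) requires $\zeta\in C_0^{\infty}(Q_1)$, in particular $\zeta$ must vanish near the lateral boundary $\{|x|=1\}$ — this is what kills the boundary terms in the Caccioppoli computation — so a cutoff that is identically $1$ on all of $B_1\times[-c_0/2,0]$ is inadmissible, and you cannot enlarge the cylinder instead, because the hypotheses (oscillation bound $w\ge \operatorname{osc} v$, Lemma~\ref{claim2}, etc.) are only available on $Q_1$ after rescaling. Consequently your claimed bound on $\int_{-c_0/2}^{0}\int_{B_1}|\nabla v_{k_s}^+|^2$ over the \emph{full} ball is unjustified, and the isoperimetric step on $B_1$ needs exactly that full-ball gradient for a.e.\ $t$. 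The repair is the paper's: take $\zeta$ equal to $1$ only on $Q(\lambda,\frac{c_0}{2})$ and apply the isoperimetric inequality to the truncations of $v\zeta$ on $B_1$ (this is legitimate since $A^{\zeta}_{k}(t)\subset A_{k}(t)$, so the lower bound $|B_1\setminus A^{\zeta}_{k_s}(t)|\ge \frac{c_0^2}{4}|B_1|$ from Lemma~\ref{claim2} survives, while $A^{\zeta}_{k_{s+1}}(t)\supset A_{k_{s+1},\lambda}(t)$); this only yields smallness of the superlevel set inside $Q(\lambda,\frac{c_0}{2})$, and one must then choose $\lambda=\lambda(\gamma,c_0)$ so that
\[
\left|Q\left(1,\tfrac{c_0}{2}\right)\setminus Q\left(\lambda,\tfrac{c_0}{2}\right)\right|\le \tfrac{\gamma}{2}\left|Q\left(1,\tfrac{c_0}{2}\right)\right|
\]
as in \eqref{lambdadiv2}, aiming for $\gamma/2$ in the interior estimate and adding the layer at the end. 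This layer bookkeeping is the missing ingredient in your writeup; with it, the rest of your argument goes through essentially as written.
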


\begin{proof}
The lemma is a variant of Remark 6.1, Lemma 6.3, 6.4 \cite{dibenedetto1}. 

\medskip

Write $\left\{ u>k\right\}:=\left\{x\in B_1,\; u>k\right\}$ for any $u\in W^{1,2}(B_1)$.
Lemma 6.3 \cite{dibenedetto1} says that for any $l>k$,
\begin{equation}\label{101}
(l-k)|\left\{ u>k\right\}|^{1-\frac{1}{d}}\leq \frac{C}{|B_1|-|\left\{ u>k\right\}|}\int_{\left\{ u>k\right\}\backslash \left\{ u>l\right\}}|\nabla u|dx.
\end{equation}
We will chosider $l=k_{s+1}, k=k_s$ in above inequaltiy, wher ${k_s}:=M^+-\frac{c_0}{2^s}w$, where $s$ is a sufficiently large integer to be determined below.

\medskip

Let us choose $\lambda = \lambda(\gamma, c_0)$ such that 
\begin{equation}\label{lambdadiv2}
\left|Q\l 1,\frac{c_0}{2} \r\big\backslash Q\l\lambda,\frac{c_0}{2}\r\right|\leq \frac{\gamma}{2}\left|Q\l 1,\frac{c_0}{2}\r\right|.
\end{equation} 
With above choice of $\lambda$, let $0\leq\zeta(x,t)\leq 1$ be a cut-off function compactly supported in $Q\l 1,\frac{c_0}{2} \r$ which equals $1$ in $Q(\lambda,\frac{c_0}{2})$. 

Write 
\[A^\zeta_{k}(t):=\left\{x\in B_1,  v \zeta> k\right\},\;A^\zeta_{k,c_0}:=\left\{(x,t)\in Q(1,\frac{c_0}{2}),  v \zeta> k\right\},\]
\[A_{k,\lambda, c_0}:=\left\{(x,t)\in Q\l \lambda,\frac{c_0}{2} \r,  v > k\right\}.\]
Then
\begin{equation}\label{6.15}
\frac{wc_0}{2^{s+1}}|A^\zeta_{{k_{s+1}}}(t)|^{1-\frac{1}{d}}\leq \frac{C}{meas\left\{B_1\backslash A^\zeta_{{k_s}}(t)\right\}}\int_{A^\zeta_{{k_s}}(t)\backslash A^\zeta_{{k_{s+1}}}(t)}|\nabla ( v\zeta)|dx.\end{equation}
Recall that from \eqref{notation} $ A_{k,R}(t)=\left\{x\in B_R,  v> k\right\}$ and $A_k(t)=A_{k,1}(t)$, so by definitions of the sets 
\[A_{k,\lambda}(t)\subseteq A^\zeta_{k}(t)\subseteq A_{k}(t).\]
By Lemma \ref{claim2}, for any $t\in[-\frac{c_0}{2},0]$,
\[meas\left\{B_1\backslash A^\zeta_{{k_s}}(t)\right\}\geq meas\left\{B_1\backslash A_{{k_s}}(t)\right\}\geq \l\frac{c_0}{2}\r^2|B_1|.\]
Since $|A^\zeta_{{k_s}}(t)|$ is bounded by $|B_1|$, 
\[C|A^\zeta_{{k_{s+1}}}(t)|^{1-\frac{1}{d}}\geq |A^\zeta_{{k_{s+1}}}(t)|\geq |A_{{k_{s+1}},\lambda}(t)|.\]
Then
\[ |A_{{k_{s+1}},\lambda, c_0}|\leq C\int_{-\frac{c_0\lambda}{2}}^0 |A^\zeta_{{k_{s+1}}}(t)|^{1-\frac{1}{d}}dt.\]
After integrating \eqref{6.15}, H\"{o}lder inequality yields that 
\[\frac{wc_0}{2^{s+1}}|A_{{k_{s+1}},\lambda,c_0}|\leq \frac{C}{c_0^2}\int^0_{-\frac{c_0}{2}}\int_{A_{{k_s}}^\zeta(t)\backslash A_{{k_{s+1}}}^\zeta(t)}|\nabla ( v\zeta)|dxdt \]
\[\leq\frac{C}{c_0^2}\left(\iint_{A_{{k_s},c_0}^\zeta\backslash A_{{k_{s+1},c_0}}^\zeta}|\nabla ( v\zeta)|^2dxdt\right)^\frac{1}{2}\left|A_{{k_s},c_0}^\zeta\backslash A_{{k_{s+1},c_0}}^\zeta\right|^\frac{1}{2}.
 \]
  
 
Next
according to \eqref{clean} 
\[\iint_{\zeta v>{{k_s}},(x,t)\in Q(1,\frac{c_0}{2})}|\nabla\l  v \zeta\r |^2 dxdt \lesssim_\lambda \frac{c_0^2w^2}{2^{2s}} +r^\epsilon w^2.\]
Then
\[\frac{wc_0}{2^{s+1}}|A_{{k_{s+1}},\lambda}|\leq \frac{C}{c_0^2}\left(  
\frac{c_0^2w^2}{2^{2s}} +r^\epsilon w^2
\right)^\frac{1}{2}\left|A_{{k_s}}^\zeta\backslash A_{{k_{s+1}}}^\zeta\right|^\frac{1}{2}.\]
Now we let $r$ be small enough that $r^\epsilon 4^{p_0}c_0^{-2}\leq 1$.
Then for all $q\leq s\leq p_0-1$ 
\[
|A_{{k_{s+1}},\lambda,c_0}|^2\leq \frac{C_0}{c_0^4} |A_{{k_s},c_0}^\zeta\backslash A_{{k_{s+1},c_0}}^\zeta|. \]

As in \cite{dibenedetto1}, since the sum of $|A_{{k_s},c_0}^\zeta\backslash A_{{k_{s+1},c_0}}^\zeta|$ is uniformly bounded by $|B_1|$. If $p_0$ is large enough, there is $s_0\in [q,p_0-1]$ that
\[C_0\left|A_{k_{s_0},c_0}^\zeta\backslash A_{k_{s_0+1},c_0}^\zeta\right|\leq \frac{C c_0^4 }{p_0-q-1}\leq c_0^4\l\frac{c'\gamma }{2}\r^\frac{1}{2}\]
with $c'=|Q\l 1,\frac{c_0}{2} \r|$. Let us choose $s=s_0$. Then 
\[|A_{{k_{s_0+1}},\lambda,c_0}|\leq  \frac{c'\gamma}{2}=\frac{\gamma}{2}\left|Q(1,\frac{c_0}{2})\right|.\]
Consequently 
\[\left|\left\{ v>M^+-\frac{c_0}{2^{p_0}}w \text{ in }Q\l \lambda,\frac{c_0}{2} \r\right\}\right|=\left|A_{k_{p_0},\lambda,c_0}\right|\leq |A_{{k_{s_0+1}},\lambda,c_0}|\leq \frac{\gamma}{2}\left|Q\l 1,\frac{c_0}{2} \r\right| .\]
Note that $p_0$ can be determined by $c_0,q,\gamma$ and so we only need $r\leq c(c_0,q,\gamma)$.

\medskip

Finally from \eqref{lambdadiv2}
\[\left|\left\{ v>M^+-\frac{c_0}{2^{p_0}}w \text{ in }Q\l 1,\frac{c_0}{2} \r\right\}\right|\leq \left|\left\{ v >M^+-\frac{c_0}{2^{p_0}}w \text{ in }Q\l \lambda,\frac{c_0}{2} \r\right\}\right|\]\[+\left|Q\l 1,\frac{c_0}{2} \r\backslash Q(\lambda,\frac{c_0}{2})\right|\leq {\gamma}\left|Q\l 1,\frac{c_0}{2} \r\right|.
\]

\end{proof}

The following lemma helps finding the value of $\gamma(c_0,p_0)$. The proof is parallel to Proposition~\ref{prop1}.

\begin{lemma}\label{claim4} Let $p_0$ be as given in Lemma~\ref{claim3}. Suppose $p>d+\frac{4}{d+2}$. There exists $\gamma\in(0,1)$ independent of $w,r,p_0$ such that if $r<c_1w^c_2$ for some $c_1,c_2$ depending on $c_0,p_0$ and
\[\left|\left\{ (x,t)\in Q(1,\frac{c_0}{2}),  v>\l M^+-\frac{c_0}{2^{p_0}}w\r\right\}\right|\leq \gamma\left|Q(1,\frac{c_0}{2})\right|, \]
then
\[\left|\left\{ (x,t)\in Q\l\frac{1}{2},\frac{c_0}{2}\r,  v>\l M^+-\frac{c_0}{2^{p_0+1}}w\r\right\}\right|=0. \]
\end{lemma}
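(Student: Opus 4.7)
The plan is to run a De Giorgi iteration on the super-level sets of $v$ near $M^+$, entirely parallel to the one used in Proposition~\ref{prop1}, but using the \emph{second} energy inequality (the $v_k^+$ version) of Lemma~\ref{inte} and Corollary~\ref{lemmaremark}. Set
\[
k_n := M^+ - \frac{c_0 w}{2^{p_0+1}} - \frac{c_0 w}{2^{p_0+1+n}}, \qquad r_n := \tfrac{1}{2} + 2^{-n-1}, \qquad \tilde{Q}_n := Q\!\left(r_n,\tfrac{c_0}{2}\right),
\]
so that $k_0 = M^+ - c_0 w/2^{p_0}$ matches the hypothesis and $k_n \nearrow M^+ - c_0 w/2^{p_0+1}$, with $k_{n+1}-k_n = c_0 w/2^{p_0+n+2}$. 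Pick cut-off functions $\zeta_n\in C_c^\infty(\tilde Q_n)$ with $\zeta_n\equiv 1$ on $\tilde Q_{n+1}$ and the usual bounds $|\nabla\zeta_n|\lesssim 2^n$, $|\partial_t\zeta_n|\lesssim 2^n/c_0$, $|\Delta\zeta_n^2|\lesssim 4^n$. Set
\[
\tilde A_{n;q} := \left(\int_{-c_0/2}^0 |A_{k_n,r_n}(t)|^q\,dt\right)^{1/q}.
\]

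Because of \eqref{extracond} we have $k_n \ge M^- + w/4$, so we may invoke Corollary~\ref{lemmaremark}: choosing the constants $c_1,c_2$ depending on $c_0$ and $p_0$ so that $r<c_1 w^{c_2}$ forces $r^{\delta_1}w^{2/m} + r^{\delta_2} w^{1+1/m} \le r^\epsilon w^2$ for some $\epsilon>0$, the energy estimate reads
\[
\|v_{k_n}^+\zeta_n\|_{V^{1,0}}^{2} \;\lesssim\; \frac{4^n}{c_0^2}\,w^2\tilde A_{n;1} \;+\; w^2\tilde A_{n;q_1}^{q_1} \;+\; 2^n w^2 \tilde A_{n;q_2}^{q_2}.
\]
The parabolic Sobolev embedding then gives $\|v_{k_n}^+\zeta_n\|_{L^2}^2 \lesssim \|v_{k_n}^+\zeta_n\|_{V^{1,0}}^2 \tilde A_{n;1}^{2/(d+2)}$, and the elementary bound $v_{k_n}^+ \ge k_{n+1}-k_n = c_0 w/2^{p_0+n+2}$ on $\{v>k_{n+1}\}$ yields the lower bound $\|v_{k_n}^+\zeta_n\|_{L^2}^2 \ge (c_0 w/2^{p_0+n+2})^2 \tilde A_{n+1;1}$. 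Combining these two inequalities produces exactly the recursion appearing in the proof of Proposition~\ref{prop1}, with a factor depending on $c_0$ and $p_0$ absorbed into the multiplicative constant $C$.

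From here the endgame is identical to Proposition~\ref{prop1}: when $p>d+2$ the monotonicity of $\tilde A_{n;q}$ in $q$ collapses the recursion to $\tilde A_{n+1;1}\le C^n \tilde A_{n;1}^{q_1+2/(d+2)}$ with $q_1+2/(d+2)>1$, and standard De Giorgi iteration gives $\tilde A_{n;1}\to 0$ provided $\tilde A_{0;1}$ is small enough. When $p\in(d+4/(d+2),d+2]$, repeating the two-exponent refinement with $\alpha=q_1$ yields the coupled system of Lemma~\ref{twoseq} for the pair $a_n=\tilde A_{n;1}$, $b_n=\tilde A_{n;q_1}$, whose conclusion again gives $\tilde A_{n;1}\to 0$ once $\tilde A_{0;1}$ is below a threshold depending only on $c_0$, $p_0$ and universal constants. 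This threshold determines the required $\gamma$, and the limit $\tilde A_{n;1}\to 0$ is precisely the statement that $v\le M^+ - c_0 w/2^{p_0+1}$ on $Q(\tfrac12,\tfrac{c_0}{2})$.

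The main obstacle is identical to the one already handled in Proposition~\ref{prop1}: for $p$ in the borderline range $(d+4/(d+2),d+2]$ the naive one-sequence De Giorgi recursion has exponent $\le 1$, so one must genuinely exploit the two-sequence improvement that couples the $L^1$-measure $a_n$ with the $L^{q_1}$-in-time measure $b_n$. The other point requiring care is quantitative: the constants $c_1$, $c_2$ governing the admissible smallness of $r$ now depend on $p_0$ (hence on $c_0$ through Lemma~\ref{claim3}), but are independent of $w$, which is exactly what the statement asks for.
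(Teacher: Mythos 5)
Your overall route (a De Giorgi iteration on the upper level sets $k_n\nearrow M^+-c_0w2^{-p_0-1}$, using the $v_k^+$ energy inequality of Lemma~\ref{inte}, the parabolic Sobolev embedding, and Lemma~\ref{twoseq} in the borderline range $p\in(d+\frac{4}{d+2},d+2]$) is exactly the intended one: the paper gives no written proof and simply declares it parallel to Proposition~\ref{prop1}. However, there is a genuine quantitative gap in your bookkeeping. The point of the lemma is that $\gamma$ is independent of $p_0$; this is not cosmetic, because in the proof of Proposition~\ref{prop2} the order of choices is $\gamma$ first, then $p_0=p_0(\gamma,c_0,q)$ via Lemma~\ref{claim3}, so a $\gamma$ depending on $p_0$ would make the argument circular. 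In your recursion you keep the energy bound in the raw form with $w^2$ in front of $\tilde A_{n;1}$, while the lower bound from $\{v>k_{n+1}\}$ is at the scale $(k_{n+1}-k_n)^2\sim (c_0 w\,2^{-p_0-n})^2$. Dividing, the multiplicative constant in the recursion picks up a factor of order $4^{p_0}c_0^{-2}$, which you propose to ``absorb into $C$''; but then the smallness threshold for $\tilde A_{0;1}$, i.e.\ your $\gamma$, degenerates as $p_0$ grows, and you have only proved the weaker statement with $\gamma=\gamma(p_0)$.

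The fix is to actually use the conclusion \eqref{clean} of Corollary~\ref{lemmaremark} (whose hypothesis $k\geq cw$ holds here by \eqref{extracond}, since $k_n\geq M^-+\frac{w}{2}$), rather than the unrefined estimate \eqref{inteplus}: in \eqref{clean} the leading term carries $|M^+-k_n|^2\sim (c_0 2^{-p_0}w)^2$, which matches the scale of the lower bound, so that contribution to the recursion has a constant independent of $p_0$ and $w$. What remains is the drift contribution $r^\epsilon w^2\tilde A_{n;q_1}^{q_1}$ (and its $q_2$ analogue in the two-exponent refinement), which must be compared with $(c_0 2^{-p_0}w)^2$; this is precisely where the hypothesis $r<c_1w^{c_2}$ with $c_1,c_2$ depending on $c_0,p_0$ is consumed, forcing $r^\epsilon w^2\lesssim (c_0 2^{-p_0}w)^2$ so that the drift term is dominated by the first one. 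After this absorption the recursion for $a_n=\tilde A_{n;1}$, $b_n=\tilde A_{n;q_1}$ has constants depending only on $m,d,p,c_0$ and universal quantities, and the iteration (single-sequence for $p>d+2$, Lemma~\ref{twoseq} for $p\in(d+\frac{4}{d+2},d+2]$) yields a smallness threshold, hence a $\gamma$, independent of $w,r,p_0$ as the statement requires. With that correction your argument goes through; as written, the $p_0$-dependence of $\gamma$ is a real defect.
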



\medskip

{\bf Proof of Proposition \ref{prop2}}

Without loss of generality, we may assume $c_0<\frac{1}{8}$.
If $M^+-M^-\leq \frac{w}{2}+c_0w$, then $ v$ is bounded by $M^+\leq (\frac{3}{4}+c_0)w$. In this case, taking $\eta\leq (\frac{3}{4}+c_0)$ finishes the proof of the Proposition with, for instance, $c_1=c_2=1$.

\medskip

Otherwise condition \eqref{extracond} is satisfied. In this case we fix $c_1,c_2$ as given in Lemma~\ref{claim1}, $p_0$ as in Lemma~\ref{claim3} and $\gamma$  as in  Lemma \ref{claim4}. By Lemma \ref{claim1} and Lemma \ref{claim2}, we know that the conclusion of Lemma \ref{claim3} is valid for the range of $r$ satisfying $r< c_1 w^{c_2}$.   By Lemma \ref{claim3}, we know that the condition in Lemma \ref{claim4} is satisfied for the specific choice of $p_0$. By Lemma \ref{claim4}  we proved that if \eqref{lemcond} is not satisfied, the solution goes down from above (from $M^+$ to $M^+-c_0 2^{-p_0-1} w$) if restricted to the smaller box $Q(1/2,c_0/2)$. This yields the conclusion with $\eta=1-c_0 2^{-p_0-1}$.

\hfill$\Box$

{\bf Proof of Theorem \ref{thmcont}}
The proof follows an iteration process which was described in the proof of Theorem 7.17 \cite{vazquez}, based on Propositions ~\ref{prop1} and ~\ref{prop2}.

\medskip

Recall that $M:= \sup_{Q_1} u$ and $\alpha=\frac{m-1}{m}$. Fix $(x_0,t_0)\in Q_{\frac{1}{2}}$, without loss of generality we can assume it is $(0,0)$, and let $\nu:= u^{1/m}$.

The goal for the argument below is to obtain
\begin{equation}\label{holder}
\eta^kw \geq {\rm osc}_{Q(a^kr, b^{2k})} \nu \hbox{ for all integers } k,
\end{equation}
where $ a,b,\eta\in (0,1)$ only depends on $M, m, p$, $\|V\|_{L^p(Q_1)}$ and the dimension $d$.

\medskip

We start with some $Q(r,w^{-\alpha})$ for some $w>0, 0<r\leq \frac{1}{2}$ such
 that 
\begin{equation}\label{improvement1}
Q(r,w^{-\alpha})\subset Q_\frac{1}{2},\;w \geq osc_{Q(r,w^{-\alpha})}\nu.
\end{equation}
For example we can take $w=M$.

 Let us start with a given pair of $(r_0,w_0)$ that satisfies \eqref{improvement1}. Below we will generate a sequence of pairs $(r_n, w_n)$ that satisfies \eqref{improvement1}. For each $n$ and the given pair $(r_n, w_n)$ let us denote
$$
M^-_n:=\inf_{Q(r_n,w_n^{-\alpha})}\nu, \quad M^+_n:=\sup_{Q(r_n,w_n^{-\alpha})} \nu.
$$

\medskip

Let $c_1$ and $c_2$ be as given in Proposition~\ref{prop2}. For each given pair $(r_n, w_n)$ the next pair $(r_{n+1}, w_{n+1})$ is generated depending on the following cases.

\begin{itemize}
\item[Case 1:] if $r_n> c_1w_n^{c_2}$, the situation is in some sense better since the oscillation is under control. In order to apply the preceding scheme, let $w_{n+1}=w_n, r_{n+1}=\frac{1}{2}r_n$, and we repeat until it falls into Case 2 or 3.

\item[Case 2:] if $r_n\leq c_1 w_n^{c_2}$ and either $M^-_n \geq \frac{w_n}{4}$ or  \eqref{lemcond} holds, we claim $\nu\in [w_n/4, M_n^+]$ in $Q(\frac{3r_n}{4},w_n^{-\alpha})$. This is trivial if $M_n^-\geq \frac{w_n}{4}$, otherwise with the help of \eqref{lemcond} we can apply Proposition~\ref{prop1}.  Then from classical regularity theory for parabolic equations, it follows that \eqref{holder} holds for $k\geq n$. 

\medskip

\item[Case 3:] We are left with the case $r_n\leq c_1w_n^{c_2}$, $M^-_n < \frac{w_n}{4}$ and \eqref{lemcond} fails. In this case
 Proposition~\ref{prop2} yields constants $0<c_0,\eta<1$ which are independent of $w$ such that
\begin{equation}\label{etaiteration}osc_{Q(\frac{r_n}{2},\frac{c_0}{2} w_n^{-\alpha})}\nu\leq \eta w_n.
\end{equation}
We choose \[w_{n+1}:=\eta w_n, \quad r_{n+1}:=c_3 r_n.\]
Here $c_3^2:=\frac{1}{8}\eta^{{\alpha}}c_0$ is chosen such that $Q(r_{n+1},w_{n+1}^{-\alpha})\subset Q(\frac{r_n}{2},\frac{c_0}{2}w_n^{-\alpha})$. 
From this choice of $c_3$ and \eqref{etaiteration} it follows that  \eqref{improvement1} holds for $(r_{n+1}, w_{n+1})$.

Suppose Case 3 is iterated for $n$ times. Then inside $\{|x|<c_3^nr, t\in(-c_5 c_4^{2n}r^2,0)\}$, the oscillation of $\nu$ is bounded by $\eta^n w$ and here $c_4=\frac{1}{4}c_0,c_5=c_0 w^{-\alpha}$. This yields \eqref{holder} for $k=n$.

\end{itemize}

\hfill$\Box$


\section{Loss of regularity: Examples}

In this section we show by examples that the regularity results obtained in section 3 and 4 are false for drifts in $L^d(\R^d)$. We will discuss examples with both potential vector fields and divergence-free vector fields. 

\subsection{Loss of uniform bound and continuity for potential vector fields}

First let us recall the description of stationary solutions for \eqref{main} with potential vector fields.

\begin{theorem}\label{asymtotic}[\cite{density}, \cite{kimlei}] 
For a radially symmetric, increasing potential $\Phi\in C^\infty(\mathbb{R}^d)$, the following is true:
\begin{itemize}
\item[1.]   The unique stationary solution of \eqref{main}, with a prescribed mass $M$, is of the form
\[\rho_M=\l C(M)-\frac{m-1}{m}\Phi\r_+^\frac{1}{m-1}.\]
\item[2.] Let $\rho$ solve \eqref{main} with $V=\nabla\Phi$ and with smooth compactly supported initial data $\rho_0$ with $\int \rho_0=M$. Then the support of $\rho$ stays bounded for all times, and $\|\rho(\cdot,t)-\rho_M(\cdot)\|_{L^{\infty}(\R^d)}\rightarrow 0$ as $t\rightarrow \infty$.
\end{itemize}
\end{theorem}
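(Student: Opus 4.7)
The plan is to handle the two parts separately. For Part 1, a stationary weak solution $\rho \geq 0$ of \eqref{main} with $V = \nabla \Phi$ satisfies $\nabla\cdot(\rho \nabla W) = 0$ in the distributional sense, where $W := \frac{m}{m-1}\rho^{m-1} + \Phi$ (using $\nabla \rho^m = \rho\, \nabla(\frac{m}{m-1}\rho^{m-1})$ on $\{\rho>0\}$). Testing against $W$ on a smooth truncation yields $\int \rho\,|\nabla W|^2\,dx = 0$, so $W$ is constant on each connected component of $\{\rho>0\}$. The ansatz $\rho = (C - \frac{m-1}{m}\Phi)_+^{1/(m-1)}$ then follows, and radial monotonicity of $\Phi$ forces the super-level sets $\{\rho>s\}$ to be concentric balls, so in particular the support is connected. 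The map $C \mapsto \int \rho_C\,dx$ is strictly increasing and continuous where $\rho_C \not\equiv 0$, hence $C(M)$ is uniquely determined by the mass constraint.

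For Part 2, I would first pin down the support. Since $\Phi$ is smooth, radially increasing, and confining (needed for stationary solutions of every mass to exist), choosing $C' > 0$ sufficiently large produces a stationary profile $\rho_{M'}$ with $M' \geq M$ that is compactly supported in some ball $B_{R'}$ and dominates $\rho_0$ pointwise. The comparison principle (Theorem~\ref{thm:comp}, applicable since $\nabla \Phi \in C^1$ and the relevant bounds on $D(\nabla \Phi)$ hold on $B_{R'}$) then gives $\rho(\cdot,t) \leq \rho_{M'}$ for all $t \geq 0$, so $\mathrm{supp}\,\rho(\cdot,t) \subset B_{R'}$ and $\|\rho(\cdot,t)\|_{\infty}$ stay uniformly bounded for all times.

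For the long-time limit I would use the free-energy functional
\[\mathcal{F}[\rho] := \int_{\mathbb{R}^d}\Bigl(\tfrac{1}{m-1}\rho^m + \rho\,\Phi\Bigr)\,dx,\]
for which \eqref{main} is the formal Wasserstein gradient flow. A direct computation gives $\frac{d}{dt}\mathcal{F}[\rho(t)] = -\int \rho\,|\nabla W|^2\,dx \leq 0$, and since $\mathcal{F}$ is bounded below on the mass-$M$ class (as $\Phi$ is bounded below and $\rho$ is compactly supported), the dissipation is integrable in time. Combined with the uniform $L^\infty$ bound, mass conservation, and equicontinuity of $\rho(\cdot,t)$ coming from the interior H\"older theory for the PME with smooth drift, any sequence $t_n \to \infty$ admits a subsequential uniform limit $\rho_\infty$ of mass $M$. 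This $\rho_\infty$ is a constrained critical point of $\mathcal{F}$, hence a stationary solution, and by Part 1 must equal $\rho_M$. Since every subsequential limit is $\rho_M$, convergence holds along the full time parameter.

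The delicate step, I expect, is upgrading the weak/$L^1$ convergence produced by the energy argument to the uniform convergence stated. This needs uniform H\"older regularity of $\rho(\cdot,t)$ in a fixed neighborhood of $\mathrm{supp}(\rho_M)$ for all large $t$ (available from the DiBenedetto--Friedman theory once $\rho$ is uniformly bounded), together with control on the convergence of the free boundaries; the latter is where the uniqueness of $\rho_M$ as the only critical point of $\mathcal{F}$ in the mass-$M$ class rules out any mass escaping to a ``ghost'' region or concentrating on $\partial\,\mathrm{supp}(\rho_M)$ in a way that would defeat uniform convergence.
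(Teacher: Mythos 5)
This theorem is quoted by the paper from \cite{density} and \cite{kimlei} and is not proved there, so there is no in-paper argument to compare with; what can be assessed is whether your sketch is a viable reconstruction. Your route is the classical one of \cite{density} and of the entropy-dissipation literature (cf. \cite{carrillo}): characterize stationary states by constancy of $W=\frac{m}{m-1}\rho^{m-1}+\Phi$ on the positivity set, confine the support by comparison with a dominating stationary profile, and drive the dynamics to the stationary set with the free energy $\mathcal{F}$, upgrading to uniform convergence by regularity. (\cite{kimlei} instead works with viscosity-solution techniques.) As an outline this is sound, and your Part 1 argument is essentially right once the connectedness step is phrased non-circularly: each connected component $U$ of $\{\rho>0\}$ has $W$ constant on it, its boundary lies in a level sphere of the radial increasing $\Phi$, hence $U$ is a ball centered at the origin, and two such balls intersect, so there is only one component.

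Two steps are genuine gaps rather than routine details. First, the deduction that \emph{every} sequence $t_n\to\infty$ has subsequential limit equal to a constrained critical point is not justified as written: integrability of the dissipation only makes $\int\rho|\nabla W|^2\,dx$ small along well-chosen times, not along an arbitrary sequence. The standard repair is to use that $\mathcal{F}[\rho(t)]$ is nonincreasing, identify its limit as $\mathcal{F}[\rho_M]$ along the good sequence, and then handle arbitrary sequences via lower semicontinuity together with the fact that $\rho_M$ is the \emph{unique minimizer} of $\mathcal{F}$ in the mass-$M$ class (strict convexity of $\rho\mapsto\frac{1}{m-1}\rho^m+\rho\Phi$), not merely a critical point. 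Second, both the existence of $\rho_M$ and your dominating compactly supported stationary supersolution $\rho_{M'}\geq\rho_0$ require more than ``radially symmetric and increasing'': one needs $\Phi$ to be confining (or the masses involved to be below the threshold permitted by $\sup\Phi$); this hypothesis is implicit in the cited works and in how the theorem is used in Section 5, and your argument should state it. Smaller, fixable issues: the dissipation identity $\frac{d}{dt}\mathcal{F}=-\int\rho|\nabla W|^2\,dx$ and the testing of the stationary equation with the non-compactly-supported $W$ both need truncation/approximation arguments at the level of weak solutions, and Theorem~\ref{thm:comp} requires globally bounded $DV$, so $\Phi$ must be modified outside a large ball before the comparison principle is invoked.
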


Based on above theorem, we give the example showing the loss of uniform boundedness of solutions. 

\begin{theorem}\label{thm5.2}
Let $d\geq 2$ and $1\leq p\leq d$. Then there exists a sequence of vector fields $\{\nabla \Phi_A(x)\}_{A\in\mathbb{N}}$ which are uniformly bounded in $L^p(\mathbb{R}^d)$ such that the following holds.  Let $u_A$ solve \eqref{main} with $V=\nabla\Phi_A$ and with  a smooth, compactly supported initial data $u_0$. Then $\sup_{x\in\R^d, t>0} u^A(x,t)  \to \infty $ as $A\to\infty$.


\end{theorem}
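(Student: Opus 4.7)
The plan is to construct a family of smooth, radially symmetric, radially increasing potentials $\Phi_A$ with $\|\nabla\Phi_A\|_{L^p}$ bounded independently of $A$ but whose well-depth at the origin goes to $-\infty$, and then invoke Theorem~\ref{asymtotic} to force the solutions $u_A$ to stabilize to explicit Barenblatt-type profiles whose $L^\infty$-norm we can make blow up.

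\textbf{Construction of $\Phi_A$.} First I would pick a single smooth radial non-decreasing profile $\phi:\mathbb{R}^d\to\mathbb{R}$, compactly supported in some $B_R$, with $\nabla\phi\in L^p(\mathbb{R}^d)$ but $\lim_{r\to 0^+}\phi(r)=-\infty$. In the subcritical range $p\in[1,d)$ I would use $\phi(r)\sim -r^{-\alpha}$ near the origin for any $\alpha\in(0,d/p-1)$, since then
\[\int_0^1 r^{-(\alpha+1)p}\,r^{d-1}\,dr<\infty.\]
For the critical case $p=d$ a logarithmic profile $\phi(r)\sim -(\log(e/r))^\gamma$ with $\gamma\in(0,(d-1)/d)$ works, because then $\int_0^1 |\phi'|^d r^{d-1}\,dr$ reduces after substitution to $\int_1^\infty s^{d(\gamma-1)}\,ds<\infty$ while $\phi(0^+)=-\infty$. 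Let $\Phi_A$ be a smooth radial non-decreasing modification of $\phi$ obtained by truncating from below at height $-A$ (and slightly mollifying the corner). Then $\Phi_A(0)=-A$, $\Phi_A=\phi$ outside a ball shrinking to $\{0\}$ as $A\to\infty$, and $\|\nabla\Phi_A\|_{L^p}\le \|\nabla\phi\|_{L^p}=:K$ uniformly in $A$.

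\textbf{Reduction to the stationary profile.} Let $M=\int u_0$. By Theorem~\ref{asymtotic}, $\|u_A(\cdot,t)-\rho^A_M\|_{L^\infty}\to 0$ as $t\to\infty$, where
\[\rho^A_M(x)=\Big(C_A-\tfrac{m-1}{m}\Phi_A(x)\Big)_+^{1/(m-1)},\]
and $C_A$ is the unique constant enforcing $\int \rho^A_M\,dx=M$. Since $\Phi_A$ is radially increasing, $\rho^A_M$ attains its maximum at the origin, with
\[\|\rho^A_M\|_{L^\infty}=(P_A)_+^{1/(m-1)},\qquad P_A:=C_A+\tfrac{m-1}{m}A.\]
So the whole task reduces to showing $P_A\to\infty$.

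\textbf{Forcing $P_A\to\infty$.} I split into two cases. If $C_A\ge 0$, then $P_A\ge\frac{m-1}{m}A\to\infty$ directly. If $C_A<0$, the support of $\rho^A_M$ is a ball $B_{r_A^\ast}$ where $\Phi_A(r_A^\ast)=\frac{m}{m-1}C_A$. By construction $\Phi_A\equiv\phi$ outside any fixed neighborhood of $0$ once $A$ is large, and $\phi(r)$ is finite for every $r>0$; hence $C_A\to-\infty$ would force $r_A^\ast\to 0$. Mass conservation gives
\[M=\int_{B_{r_A^\ast}}\rho^A_M\,dx\le |B_{r_A^\ast}|\cdot(P_A)_+^{1/(m-1)},\]
so $P_A\ge (M/|B_{r_A^\ast}|)^{m-1}\to\infty$. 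Combining with Step~2, for each large $A$ one can choose $t_A$ with $\|u_A(\cdot,t_A)\|_{L^\infty}\ge\tfrac12\|\rho^A_M\|_{L^\infty}$, so $\sup_{x,t}u_A\to\infty$.

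The main subtlety is the case $C_A<0$ in Step~3: a priori the Lagrange multiplier $C_A$ could drift to $-\infty$ in step with $-A$, so that $P_A$ stays bounded. The volume-versus-mass estimate rules this out, but only because $\Phi_A$ is essentially $A$-independent outside a vanishing core, which is why the truncation construction (rather than just a rescaled family) is important.
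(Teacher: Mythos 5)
Your proposal is correct and follows essentially the same route as the paper: radially increasing potentials with uniformly $L^p$-bounded gradients and well depth tending to infinity, an appeal to Theorem~\ref{asymtotic} to pass to the stationary profile $\rho_M^A$, and a mass-versus-support-volume estimate showing its maximum must blow up (the paper's contradiction argument --- a bounded supremum would keep the support from shrinking, while the potential drop between radius $(\ln\ln A)^{-1}$ and the origin diverges --- is the same estimate in different clothing). The only differences are cosmetic: the paper uses one $\ln\ln(1/r)$ profile truncated in radius, which handles all $1\le p\le d$ (indeed $L^d_{\log^q}$) at once, whereas you truncate power/log profiles in height and treat $p<d$ and $p=d$ separately.
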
 

\begin{proof}

Let $f(r):=\ln\ln\frac{1}{r}$ for $r\in(0,1)$, which satisfies $f'(r) = (1/r)\ln r$ . For each $A>>1$,
consider a radially symmetric and increasing function $\Phi_A(x):= \phi_A(|x|)$ such that 
\begin{itemize}
\item[1.] $\phi_A(r)=-f(r)\quad\text{ if }1/A\leq r \leq (\ln\ln A)^{-1}$;\\
\item[2.] $\phi'_A(r)\leq  -2f'(r)\quad\text{ if }{1}/{(2A)}\leq r \leq 2(\ln\ln A)^{-1}$; \\
\item[3.] $0\leq \phi'_A(r) \leq \min\{1,r^{-d-1}\}$\quad if $ r\leq {1}/{(2A)}$ or $r\geq 2(\ln\ln A)^{-1}$.
\end{itemize}
From Theorem \ref{asymtotic} there is a stationary solution $\rho_1^A$ with total mass $1$, of the  form \[\rho_1^A(x)=\l C_A -\frac{m-1}{m}\Phi_A(x)\r^{\frac{1}{m-1}}_+.\] 

We claim that $\rho_A(0)\to\infty$ as $A\to\infty$. Indeed, otherwise $\sup\rho_A = \rho_A(0)$ are uniformly bounded, and having to reach total mass $1$ the support of $\rho_A$ must stay away from vanishing. Thus  it follows that $\rho_1^A((\ln\ln(A))^{-1}) > 0$ for sufficiently large $A$, and we have 
 \[\rho_1^A(0)=\l C_A -\frac{m-1}{m}\Phi_A(0)\r^{\frac{1}{m-1}}\geq \l\frac{m-1}{m}\Phi_A((\ln\ln A)^{-1})-\frac{m-1}{m}\Phi_A(0)\r^\frac{1}{m-1}.\]
Since $\Phi_A((\ln\ln A)^{-1}) -\Phi_A(0) >f\l\frac{1}{A}\r-f((\ln\ln A)^{-1})\to +\infty$ as $A\to +\infty$, we reached a contradiction. 

Let $u^A$ be a solution to \eqref{main} with initial data
$u_0(x)$. Then by Theorem \ref{asymtotic}, $u^A\to \rho^A_1$ in $L^\infty(\mathbb{R}^d)$ as $t\to \infty$. From above argument we have
$\sup_{t>0} u^A(0,t)=\rho_1^A(0)\to \infty$ as $A\to \infty$.



To finish the proof we only need to check that $\nabla\Phi_A$ is bounded in $L^p(\mathbb{R}^d)$. In fact one can check that the vector fields $\nabla\Phi_A$ is uniformly bounded in $L^{d}_{\log^{q}}$ for  all $0<q< d-1$, where 
\[\left\|V\right\|^d_{L^{d}_{\log^q}}:=\int_{\mathbb{R}^d}|V|^d\max\{\log^q |V|,1\}dx.\]

It is enough to check the region $\frac{1}{A}\leq|x|\leq (\ln\ln A)^{-1}$, since elsewhere property 3. in the construction of $\Phi_A$ guarantees the uniform bound. We have

\begin{align*}
&\int_\frac{1}{A}^{(\ln\ln A)^{-1}} |\nabla \Phi_A|^d \log^{q}|\nabla \Phi_A| dx\lesssim \int_\frac{1}{A}^{(\ln\ln A)^{-1}} |\ln|x||^{-d}|x|^{-d}\log^q |x|^{-1}dx
\\
&=\int_\frac{1}{A}^{(\ln\ln A)^{-1}} 
|\ln r|^{-d+q} r^{-1}d r = \epsilon\l|\ln \ln\ln A|^{-\epsilon}-|\ln A |^{-\epsilon}\r,
\end{align*}
which is uniformly bounded as $A\to\infty$ if $d-1-q = \e>0$. 
\end{proof}


By choosing another family of potentials, we can also check that $L^d(\R^d)$ bound on drifts does not guarantee any modulus of continuity for solutions of \eqref{main} even when the solutions are uniformly bounded.

\begin{theorem}\label{thm:lackofcont}
There exists a family of potentials $\Phi_A$ such that $\nabla\Phi_A\in L^d(\R^d)$ and a family of initial data $u_0^A$ which are uniformly bounded in $L^1(\mathbb{R}^d)\cap L^\infty(\mathbb{R}^d)\cap C^\infty(\mathbb{R}^d)$ such that the following holds: The solutions $u^A$ of \eqref{main} with $V=\nabla\Phi_A$ with initial datas $u_0^A$  stays uniformly bounded but lacks any uniform modulus of continuity as $A\to\infty$. 
\end{theorem}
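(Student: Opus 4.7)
The plan is to construct potentials $\Phi_A$ whose gradients are uniformly bounded in $L^d$ but whose stationary profiles (from Theorem~\ref{asymtotic}(1)) retain a fixed height while having support shrinking to a point. Choosing smooth initial data close to these profiles then yields solutions $u^A$ that remain close in $L^\infty$ for all time, and so inherit the lack of continuity.

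Fix a smooth, nondecreasing $\phi:\R\to[0,1]$ with $\phi\equiv 0$ on $(-\infty,0]$ and $\phi\equiv 1$ on $[1,\infty)$, and a constant $C^*>\frac{m}{m-1}$. For each integer $A\ge 2$, set $r_A:=1/A$ and
$$\Phi_A(x):= C^*\phi\!\left(\frac{|x|-r_A}{r_A}\right).$$
Then $\Phi_A$ is smooth, radial, nondecreasing, vanishes on $B_{r_A}$, and equals $C^*$ outside $B_{2r_A}$. A change of variables $s=(|x|-r_A)/r_A$ yields
$$\|\nabla\Phi_A\|_d^d \lesssim (C^*)^d \int_0^1|\phi'(s)|^d(1+s)^{d-1}\,ds,$$
which is independent of $A$. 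Setting $C_A:=\frac{m}{m-1}$, the profile
$$\rho^A(x):=\Bigl(\tfrac{m-1}{m}(C_A-\Phi_A(x))\Bigr)_+^{1/(m-1)}$$
is, by Theorem~\ref{asymtotic}(1), the stationary solution of mass $M_A:=\int\rho^A\,dx$; moreover $\rho^A\equiv 1$ on $B_{r_A}$, $\rho^A\equiv 0$ outside some $B_{r_A^*}\subset B_{2r_A}$, and $M_A\to 0$ as $A\to\infty$.

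Let $u_0^A\in C_c^\infty(\R^d)$ be obtained by mollifying $\rho^A$ at a scale much finer than $r_A$, so that $\|u_0^A-\rho^A\|_1+\|u_0^A-\rho^A\|_\infty\le 1/A$ and $\|u_0^A\|_\infty\le 1$. Then $\{u_0^A\}$ is uniformly bounded in $L^1(\R^d)\cap L^\infty(\R^d)$, with each member smooth. Let $u^A$ be the corresponding solution of \eqref{main}. By Theorem~\ref{asymtotic}(2), combined with stability estimates for the gradient-flow dynamics near $\rho^A$ (exploiting monotonicity of the free energy $\mathcal F[u]=\int\bigl(\tfrac{u^m}{m-1}+u\Phi_A\bigr)dx$ with $\|\Phi_A\|_\infty\le C^*$ fixed), one obtains $\|u^A(\cdot,t)-\rho^A\|_\infty\le 1/4$ for all $t\ge 0$, whence $\sup_{A,t}\|u^A(\cdot,t)\|_\infty\le 5/4$. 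To rule out any uniform modulus of continuity, suppose $|u^A(x,t)-u^A(y,t)|\le\omega(|x-y|)$ uniformly in $A$ and $t$ for some $\omega$ with $\omega(r)\to 0$. Taking $x_A=0$ and any $y_A$ with $|y_A|=2r_A$, we have $\rho^A(x_A)=1$ and $\rho^A(y_A)=0$, so for $t$ large enough $|u^A(x_A,t)-u^A(y_A,t)|\ge 1/2$, giving $\omega(2r_A)\ge 1/2$ for every $A$, a contradiction.

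The main technical obstacle is the uniform-in-time $L^\infty$ bound on $u^A$: since $\|\nabla\Phi_A\|_p\to\infty$ for every $p>d$, the a priori estimate of Theorem~\ref{uniformb} does not directly apply, and one must instead exploit the gradient-flow/potential structure of the equation (as available via the analysis in \cite{density, kimlei}) to propagate the closeness of $u_0^A$ to the bounded stationary profile $\rho^A$. The remaining steps — the $L^d$ computation for $\nabla\Phi_A$, the explicit form of $\rho^A$, and the final modulus-of-continuity argument — are elementary.
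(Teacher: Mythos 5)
There are genuine gaps, and the most serious one concerns your choice of initial data. Your $u_0^A$ is a fine-scale mollification of the stationary profile $\rho^A$, which equals $1$ on $B_{1/A}$ and vanishes outside $B_{2/A}$; hence $u_0^A(0)\geq 1-1/A$ while $u_0^A(y)\leq 1/A$ for $|y|\sim 3/A$. Thus the family $\{u_0^A\}$ already fails to admit any common modulus of continuity (and is certainly not uniformly bounded in $C^k$ for any $k\geq 1$, since $|\nabla u_0^A|\gtrsim A$ near the transition annulus). This violates the hypothesis of Theorem~\ref{thm:lackofcont} that the initial data be uniformly bounded in $L^1\cap L^\infty\cap C^\infty$, and it also empties the statement of its content: the point is that uniformly smooth data are driven by the drift into a family with no common modulus of continuity. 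The paper achieves this by taking $u_0^A$ uniformly smooth with the small mass $\int u_0^A=\int\rho^A\sim A^{-d}$ and letting the dynamics (via Theorem~\ref{asymtotic}) concentrate the mass into the spike $\rho^A=\rho(Ax)$; in your construction the ``loss of continuity'' is present at $t=0$ and nothing about the evolution is really used.

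Two further steps are asserted rather than proved. First, the uniform-in-time $L^\infty$ bound: monotonicity of the free energy $\int\bigl(\tfrac{u^m}{m-1}+u\Phi_A\bigr)dx$ controls at best $L^m$-type quantities and gives no route to $\|u^A(\cdot,t)-\rho^A\|_\infty\leq 1/4$; since $\|\nabla\Phi_A\|_p\to\infty$ for every $p>d$, Theorem~\ref{uniformb} does not apply directly, and this is exactly the crux. The paper resolves it by rescaling, $v^A(x,t)=u^A(A^{-1}x,A^{-2}t)$, noting that the rescaled drift $A^{-1}(\nabla\Phi_A)(A^{-1}x)$ is uniformly bounded in $L^{d+1}$ and the rescaled data have uniformly bounded mass (this is where the choice $\int u_0^A\sim A^{-d}$ is used), so Theorem~\ref{uniformb} applies to $v^A$. (For your specific data a comparison with the stationary supersolution $\bigl(1+\eta-\tfrac{m-1}{m}\Phi_A\bigr)_+^{1/(m-1)}$, justified via Theorem~\ref{thm:comp}, would also work, but you would have to say so.) Second, you invoke Theorem~\ref{asymtotic} for a potential that is constant outside $B_{2/A}$, whereas that theorem is stated for radially symmetric \emph{increasing} smooth potentials; with a potential flat at infinity the confinement needed for stabilization to the compactly supported equilibrium is not automatic, which is precisely why the paper's truncation keeps $\Phi_A$ radially increasing while only capping $|\nabla\Phi_A|$ by $\min\{1,|x|^{-1}\}$ away from the origin.
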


\begin{proof}
Let $\phi(x)=|x|^2$, and let $\rho$ be a stationary solution of \eqref{main} $\rho$ given in Theorem \ref{asymtotic}, with a sufficiently small mass such that $\rho$ is supported inside of the unit ball.
Let $\phi_A(x):=\phi(Ax)$, and $\rho^A(x):=\rho(Ax)$, which is a stationary solution for $\phi_A$. Let us next modify $\phi_A$ so that $\nabla\phi_A$ is uniformly bounded in $L^d(\R^d)$, let $\Phi_A$ satisfy
\begin{itemize}
\item[1.] $\Phi_A=\phi_A$ if $|x|\leq {1}/{A}$.\\
\item[2.] $|\nabla \Phi_A|\leq |\nabla \phi_A|$ if $|x|\leq {2}/{A}$.\\
\item[3.] $|\nabla \Phi_A|\leq \min\{1,|x|^{-1}\}$ if $|x|\geq {2}/{A}$.\\
\item[4.] $\Phi_A$ is smooth, radially symmetric and increasing.
\end{itemize}
Then $\nabla\Phi_A$ is uniformly bounded in $L^d(\R^d)$ and $\rho_A$ is still a stationary solution for the modified potential $\Phi_A$.

\medskip

 For $A>1$, consider a sequence of functions $u_0^A\geq 0$ such that they are uniformly bounded in $L^1(\mathbb{R}^d)\cap L^\infty(\mathbb{R}^d)\cap C^\infty(\mathbb{R}^d)$ and $\int u_0^A dx= \int \rho^A dx=CA^{-d}$. By Theorem \ref{asymtotic}, the solution $u^A$ of \eqref{main} with initial data $u_0^A$ and with $V=\nabla\Phi_A$ converges uniformly to  $\rho^A=\rho(Ax)$ and $\rho^A$ converges pointwise to a discontinuous function $\rho^{\infty}$ which is $1$ at $x=0$ and zero for sufficiently small $|x|$. It follows that $u^A$ cannot share any uniform modulus of continuity.

We are left to show that $u^A$ is bounded.
To see this let $v^A(x,t):=u^A\l A^{-1} x,A^{-2} t\r$. Then 
\[v^A_t=\Delta \l v^A\r^m+\nabla\cdot \l v^A A^{-1}\l\nabla \Phi_A\r\l A^{-1} x \r\r,\]
\[\text{ and }\left\|A^{-1}\l\nabla \Phi_A\r(A^{-1}x)\right\|^{d+1}_{d+1}=A^{-1}\left\|\nabla \Phi_A\right\|^{d+1}_{d+1}\]
\[\lesssim 
A^{-1}\l\int_0^\frac{2}{A}|A^2 x|^{d+1}dx+
\int_1^\infty|x|^{-d-1}dx+1\r<\infty.\]
The vector field $ A^{-1}\l\nabla \Phi_A\r\l A^{-1} x \r$ are uniformly bounded in $L^{d+1}$ and $v^A(0)$ are uniformly bounded in $L^1(\mathbb{R}^d)\cap L^\infty(\mathbb{R}^d)$. By previous Theorem \ref{uniformb}, $v^A$ are uniformly bounded and so are $u^A$.

\end{proof}

\subsection{Loss of H\"{o}lder regularity for Divergence free vector fields}

In previous subsection we have seen that drifts bounded in $L^d(\R^d)$ and initial data that are bounded in $L^1(\R^d)\cap L^{\infty}(\R^d)$ are insufficient to yield uniform mode of continuity for solutions of \eqref{main}. Our example used a series of potential vector fields with strong compression at one point, which yields discontinuity in limit.  In this section we will show that the loss of regularity continues to be true for divergence free vector fields, though here we are only able to present loss of H\"{o}lder estimates. Our example leaves open the possibility of weaker modulus of continuity.

Our examples are inspired by that of  \cite{loss}, where parallel results are shown for a fractional diffusion-drift equation, however there is a significant difference on the barrier argument that is presented below.  While \cite{loss} makes use of the nature of their fractional diffusion, we make use of the degeneracy of the diffusion in the small density zone.  More precisely, our counterexamples will describe loss of regularity near small density region, due to the discontinuities of the drifts across the cone $|x|=|y|$ (in two dimensions) and $|y| = |x_1 \pm x_2|$ (in three dimensions).  In the construction of barriers below, our use of degenerate diffusion appears both in the evolution of the density height in the barriers, and in the construction of supersolution where the zero set propagates with finite speed. See below for further discussion on the construction of barriers.


\medskip

\subsubsection{Example in $d=2$}


Let us begin with two dimensions, where the presentation illuminates the main components of the argument better. Let us recall that,  for $u$ solving \eqref{main} with divergence free $V$, the pressure variable $v:=\frac{m}{m-1}u^{m-1}$ solves
\begin{equation}\label{pressure2}
v_t-(m-1)v\Delta v-|\nabla v|^2+V\cdot \nabla v=0.
\end{equation}
We will prove the following theorem by constructing barriers for the pressure equation above.

\medskip

\begin{theorem}\label{divfree2}
There is a sequence of bounded vector fields $\{V_n\}$ which are uniformly bounded in $ L^2(\R^2)$, and a sequence $\{u_n\}_n$ of solutions for \eqref{pressure2} with $V_n$ that satisfies the following:
 \begin{itemize}
 \item[1.] $\{u_n(x,0)\}$ are uniformly bounded in $C^k(\R^2)$ for any $k>0$;
\item[2.]  $\{u_n\}$ are uniformly bounded in $\R^2\times [0,1]$;
\item[3.] For any $\delta>0$ we have
$\sup_n [ u_n]_{\delta}  = \infty$,
where $[f]_{\delta}$ denotes the $C^{\delta}$ semi-norm of $f$ in $\R^2\times [0,1]$.
\end{itemize}

\end{theorem}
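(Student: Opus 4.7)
The plan is to work at the level of the pressure equation \eqref{pressure2} and to produce, for each large $n$, two points $p_n^{\pm} \in \R^2$ with $|p_n^+ - p_n^-| \to 0$ and a time $t_n \in (0,1]$ at which $u_n(p_n^+,t_n) \geq c_0 > 0$ and $u_n(p_n^-,t_n) = 0$, with $c_0$ independent of $n$. Since $c_0$ and $|p_n^+ - p_n^-|$ are respectively bounded below and converging to zero, this immediately precludes any common H\"older modulus of continuity. The strategy is to sandwich $u_n$ between a subsolution that retains a definite height at $p_n^+$ and a supersolution whose support stays away from $p_n^-$.

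I would take $V_n = \nabla^\perp \psi_n$ (which is automatically divergence-free) with a stream function whose gradient has tangential jumps across the cones $\{|x|=|y|\}$, arranged so that in the ``compressive'' sectors $\{|x|>|y|\}$ the vector field points inward toward the $x$-axis, while in the ``expansive'' sectors $\{|x|<|y|\}$ it points outward from the $y$-axis. To keep $V_n$ uniformly bounded in $L^2(\R^2)$ while retaining this structure down to a small scale $\e_n \to 0$, I would equip $\psi_n$ with a logarithmic radial cutoff such as $\psi_n(r,\theta) = (\log(1/\e_n))^{-1/2}\min\{\log(1/r),\log(1/\e_n)\}\, g(\theta)$, where $g$ is smooth off the cone. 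A direct polar computation then gives $\|V_n\|_{L^2(\R^2)}^2 \lesssim 1$. The initial data $u_{n,0}$ are chosen as a fixed smooth nonnegative bump supported in the annulus $\{1/2<|x|<1\}$ away from the cones.

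The barriers are built as follows. For the subsolution, I track a narrow ``comet'' $\underline{v}_n$ along the compressive streamlines of $V_n$ in $\{|x|>|y|\}$, whose tip lands near $p_n^+:=(\e_n,0)$ at some $t_n \sim 1$; along its support, the favorable sign of $V_n\cdot \nabla \underline{v}_n$ combined with the vanishing of the degenerate diffusion $\underline{v}_n\Delta \underline{v}_n$ near the free boundary make $\underline{v}_n$ a subsolution of \eqref{pressure2}. For the supersolution, I take $\overline{v}_n$ of the form $(a-b\, d(x,\Gamma_n(t)))_+$, where $\Gamma_n(t)$ is a curve expanding from the $y$-axis with suitably tuned speed. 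The degeneracy of \eqref{main} guarantees finite propagation speed for the free boundary of $\overline{v}_n$, and on $\{|x|<|y|\}$ the vector field $V_n$ points away from the $y$-axis, so $V_n\cdot \nabla \overline{v}_n \geq 0$ near the free boundary; choosing $\Gamma_n(t)$ so that it barely overtakes this free boundary speed makes $\overline{v}_n$ a supersolution whose support excludes $p_n^-:=(0,\e_n)$ at $t=t_n$. After regularizing $V_n$ to $V_n^\delta$, applying comparison for smooth coefficients (in the spirit of Theorem \ref{thm:comp}), and sending $\delta\to 0$, we obtain the desired sandwich.

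The main obstacle is verifying the supersolution inequality at the free boundary of $\overline{v}_n$. The quadratic term $|\nabla \overline{v}_n|^2$ in \eqref{pressure2} works against us and must be dominated by a combination of the degenerate diffusion $(m-1)\overline{v}_n \Delta \overline{v}_n$ and the favorable drift $V_n\cdot \nabla \overline{v}_n\geq 0$. This balance is tight: it is made possible precisely by the logarithmic scaling of $V_n$, which keeps the drift macroscopically compressive on every dyadic scale between $\e_n$ and $1$, together with a careful choice of the slope $b$ in the barrier. As the authors emphasize, the construction is modeled on \cite{loss} for fractional diffusion, but a genuinely new ingredient here is the use of finite propagation of the porous medium diffusion at low densities, which allows the support of $\overline{v}_n$ to be held at a positive distance from $p_n^-$ uniformly in $n$.
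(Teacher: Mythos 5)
Your overall architecture (divergence-free drift singular across a cone, a traveling-bump subsolution in the compressive sector, a supersolution with moving zero set exploiting finite propagation, then comparison after regularizing $V$) is the same as the paper's. But there is a genuine gap at the decisive quantitative step: you assert that the subsolution delivers a height $u_n(p_n^+,t_n)\geq c_0>0$ with $c_0$ \emph{independent of $n$}. For the pressure equation \eqref{pressure2} this cannot hold for a comet/bump barrier. At the peak of the bump $\nabla \underline{v}_n=0$, so both the favorable drift term $V_n\cdot\nabla\underline{v}_n$ and the helpful $-|\nabla\underline{v}_n|^2$ vanish there, while $-(m-1)\underline{v}_n\Delta\underline{v}_n\sim h^2/\rho^2>0$; the subsolution inequality at the peak therefore forces $h'\lesssim -c\,h^2/\rho^2$, i.e.\ the height must decay as the width $\rho$ shrinks toward $\e_n$. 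This is exactly why the paper's barrier has height $c_s z^s(t)$ rather than a constant, and why the value finally retained at the good point is only of order $\e^{2s}$ (one factor $\e^s$ from the subsolution's decay, a second from having to fit the initial data under the supersolution, whose coefficient $k(t)$ must survive the Riccati-type growth $k'\geq Ck^2z^{-2}r^{-2}$ forced by the $|\nabla\varphi|^2$ term up to time $T$). The paper compensates by a second parameter $s\to 0$: for each $\delta$ it picks $s<\delta/2$ so that $\e^{2s-\delta}\to\infty$. Your construction has only the single parameter $\e_n$ (with a logarithmic normalization), so item 3 does not follow unless you actually estimate the retained height and show it beats every power $\e_n^\delta$ — none of which is carried out. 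Worse, the literal claim of a fixed gap $c_0$ at merging points would prove loss of \emph{every} modulus of continuity for divergence-free $L^2$ drifts, which the paper explicitly states it cannot achieve and leaves open; so the claim as stated should be presumed false, not just unproved.

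Two further steps you flag but do not address are precisely where the technical work lies in the paper: (i) the transport consistency $\partial_t\Phi+V\cdot\nabla\Phi\leq 0$ (resp.\ $\geq 0$) is not automatic from a qualitative "the drift points inward/outward"; the paper proves it by showing that an explicit function $f$ built from the two cone singularities has a local minimum on the axis, which fixes the admissible bump radius $r_s$; and (ii) for your cone-profile supersolution $(a-b\,d(x,\Gamma_n(t)))_+$ the term $|\nabla\overline{v}_n|^2=b^2$ does not vanish at the free boundary, so it must be absorbed by the prescribed normal velocity of $\Gamma_n$ and the drift; the paper instead uses a smooth profile with $\varphi(0)=\varphi'(0)=0$ satisfying \eqref{barrier101}, pushing the cost into the growth of $k(t)$ — which is exactly the source of the $\e^s$ smallness constraint on the initial data that your outline ignores. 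Until the height retained at $p_n^+$ is quantified against $|p_n^+-p_n^-|^\delta$, the proof of item 3 is missing.
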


$\circ$ {\it Construction of vector fields}

\medskip

For $s\in (0,1)$ define
\[\psi(x,y):=\frac{1}{2}s^\frac{1}{2}\left(|x-y|^s-|y+x|^s\right).\]
Let us also define smooth cut-off functions $\kappa$ and $\mu_\e$  satisfying
\begin{equation*}\label{chi2}\chi_{[-\frac{1}{3},\frac{1}{3}]}\leq \kappa \leq \chi_{[-\frac{1}{2},\frac{1}{2}]}\end{equation*}
and
\begin{equation}\label{muep2}\chi_{[2\epsilon,10)}\leq \mu_\epsilon \leq \chi_{[\epsilon,20)},\quad |\mu_\epsilon'|(x,y)\leq \frac{2}{|(x,y)|}.\end{equation}

Now define  
\begin{equation}\label{vector_field}
V:= \nabla^\perp F = \l-\partial_y F,\partial_x F\r, \hbox{ where }  F(x,y):=\psi(x,y) \kappa\left(\frac{x}{y}\right)\mu_\epsilon(|(x,y)|).
\end{equation}

We claim that for all $s, \e \in (0,1)$, $V$ is bounded uniformly in $L^2(\mathbb{R}^d)$. 
To see this,  note that by definition we have 
$$|\nabla^\perp F|=|\nabla^\perp ( \psi\kappa)|\leq Cs^\frac{1}{2}|y|^{s-1}\hbox{ for }|(x,y)|\in [2\epsilon,1].
$$ Then
\[ \left\|\nabla^\perp F(x,y)\right\|^2_{L^2(B_1\backslash B_{2\epsilon})}\leq C \int_0^1\int_0^{cy}s|y|^{2(s-1)}dxdy\leq C.\]
On the other hand in $B_{2\epsilon}$, by \eqref{muep2}, we have $|\nabla^\perp F|\lesssim x^\frac{1}{2}|(x,y)|^{s-1}$. The claim follows now from above computations and the truncation. 

\medskip



We will prove Theorem~\ref{divfree2} by comparison principle, Theorem~\ref{thm:comp}. More precisely, below we will construct a subsolution $\bar{u}$ and a supersolution $\ud{u}$  of \eqref{pressure2} to compare with $v_s$, to show the following:
\begin{Claim}
There exists a family of solutions $\{v_s\}_{s>0}$ of \eqref{pressure2}, with smooth initial data bounded uniformly in $C^2$ with respect to $\e$ and $s$, such that the following holds: 
\begin{itemize}
\item[1.]  $v_s(0,4\epsilon,T)\geq C_{s} \epsilon^{2s}$,
\item[2.]$v_s(0,-4\epsilon,T)=0$.
\end{itemize}
\end{Claim}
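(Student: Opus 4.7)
The plan is to apply the comparison principle to sandwich $v_s$ between a subsolution and a supersolution of the pressure equation \eqref{pressure2}. I take $v_s(\cdot, 0)$ to be a smooth bump of height $1$ centered at $(0, 1)$, identically $1$ on $B_{1/4}((0, 1))$ and identically $0$ on $\{y \leq 0\}$; this has $\|v_s(\cdot, 0)\|_{C^2}$ bounded uniformly in $\e, s$, is positive at $(0, 4\e)$, and vanishes in a neighborhood of $(0, -4\e)$.

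For the lower bound at $(0, 4\e)$, I exploit that along the positive $y$-axis the drift reduces (within the support of $\kappa$) to $V(0, y) = (0, -s^{3/2} y^{s-1})$, advecting particles toward the origin. This motivates the ODE $Y'(t) = -s^{3/2} Y(t)^{s-1}$, which starting from $Y(0) \asymp 1$ reaches $Y(T) = 4\e$ at $T = T(\e, s) \asymp s^{-3/2}\e^{2-s}$. I try a subsolution of the tent form
\[\bar v(x, y, t) = h(t)\bigl[1 - B|x|^2/r(t)^2 - B(y - Y(t))^2/r(t)^2\bigr]_+,\]
with $r(t) \sim Y(t)$ and $h(t) \sim Y(t)^{2s}$, so that the translation $Y(t)$ absorbs the principal $V \cdot \nabla \bar v$ contribution along the axis while the diffusive terms $(m-1)\bar v \Delta \bar v + |\nabla \bar v|^2$ absorb transverse corrections. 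Since $\bar v(\cdot, 0) \leq 1 = v_s(\cdot, 0)$ on the support of $\bar v$, comparison yields $v_s(0, 4\e, T) \geq \bar v(0, 4\e, T) = h(T) \asymp \e^{2s}$.

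For the vanishing at $(0, -4\e)$, observe that $\psi \equiv 0$ on both coordinate axes, making them streamlines of the divergence-free $V$. The drift therefore does not transport mass from $\{y>0\}$ across $\{y=0\}$, and the only way $v_s > 0$ could reach $(0, -4\e)$ is via finite-speed PME spreading. I construct a moving-paraboloid supersolution
\[\ud v(x, y, t) = [\alpha t + \beta - K|x|^2 - K(y+4\e)^2]_+\quad\text{on } \{y \leq 0\},\]
with $\alpha, \beta, K$ chosen so that $\ud v(\cdot, 0) \geq v_s(\cdot, 0) \equiv 0$ on $\{y \leq 0\}$, $\ud v(0, -4\e, t) = 0$ for all $t \in [0, T]$, and the supersolution inequality for \eqref{pressure2} holds. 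Since $\text{supp}(\ud v)$ stays a bounded distance from the origin, $|V| \lesssim s^{1/2} \e^{s-1}$ on this set and the drift term is absorbed by $\alpha$. Combining both bounds via the comparison principle (translating between $u$ and $v$, using Theorem~\ref{thm:comp}) gives the two assertions of the Claim.

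The main obstacle is verifying the subsolution inequality for $\bar v$. The drift is unbounded as $y \to 0$, and off the $y$-axis the translation $-Y'(t)\bar v_y$ does not exactly cancel $V \cdot \nabla \bar v$: the residual $(V_2(x, y) - V_2(0, Y))\bar v_y$ scales like $s^{5/2} Y^{s-2} \cdot |y - Y| \cdot h/r \asymp s^{5/2} Y^{s-1} h$, which must be dominated by the diffusive terms of order $h^2/r^2 \sim h^2/Y^2$. Matching these forces the scaling $h \asymp Y^{2s}$, producing the factor $\e^{2s}$ in the Claim. Corrections from the cutoff $\kappa(x/y)$ to the transverse component $V_1$ and the verification of the subsolution condition across the free boundary of $\bar v$ constitute the remaining bookkeeping, which is routine given this careful scaling.
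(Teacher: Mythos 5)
Your subsolution is in the same spirit as the paper's (a shrinking bump of height $\sim Y(t)^{\text{power}}$ advected along the $y$-axis, compared from below), and with care it can be made to work, although two details are off: the degenerate term $-(m-1)\bar v\Delta\bar v$ is \emph{adverse} near the peak of a concave bump (it erodes it), so it cannot ``absorb transverse corrections'' --- in the paper that work is done by the prescribed decay of the height together with the separate verification $\partial_t\Phi+V\cdot\nabla\Phi\le 0$ near the moving center; and $T\asymp s^{-3/2}$, not $s^{-3/2}\e^{2-s}$, since $Y^{2-s}(t)=1-(2-s)s^{3/2}t$.

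The genuine gap is part 2, and it is not bookkeeping. First, your barrier $\ud v=[\alpha t+\beta-K|x|^2-K(y+4\e)^2]_+$ attains its \emph{maximum} at $(0,-4\e)$, so it cannot certify $v_s(0,-4\e,T)=0$ unless it is identically zero; and since you only define it on $\{y\le 0\}$, comparison there would require dominating $v_s$ on the lateral boundary $\{y=0\}\times[0,T]$, over which you have no control (near the $x$-axis the cutoff $\kappa$ makes $V\equiv 0$, so mass leaking sideways out of the upper cone spreads by pure PME for a time of order $s^{-3/2}\gg1$; the ``streamline'' remark constrains only the drift, not the diffusion). The paper instead builds a \emph{global} supersolution $k(t)\,\varphi\bigl(|(x,y+z(t))|/(rz(t))\bigr)$ whose single zero is advected by the drift from $(0,-1)$ to exactly $(0,-4\e)$ at time $T$; the profile inequality forces the Riccati-type condition $k'\gtrsim r^{-2}z^{-2}k^2$, and since $\int_0^T z^{-2}\,dt\sim M s^{-1}\e^{-s}$, such a barrier survives until $T$ only if $k(0)\lesssim r^2 s^{5/2}\e^{s}$. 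Consequently the initial data must itself be taken of height $\lesssim \e^{s}$ --- your unit-height data is incompatible with any barrier of this kind, and this is exactly where one of the two factors of $\e^{s}$ in the bound $C_s\e^{2s}$ comes from (the other being the decay $z^s(T)=(4\e)^s$ of the subsolution height). Your ``matching'' heuristic misattributes the origin of $\e^{2s}$ (indeed your displayed balance only yields $h\gtrsim s^{5/2}Y^{s+1}$), and without the smallness of the data and a global moving-zero supersolution the vanishing statement $v_s(0,-4\e,T)=0$ is not established.
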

This claim will conclude the theorem if we take $s\to 0^+$.  

\begin{figure}[b]\centering\includegraphics[width=0.6\textwidth]{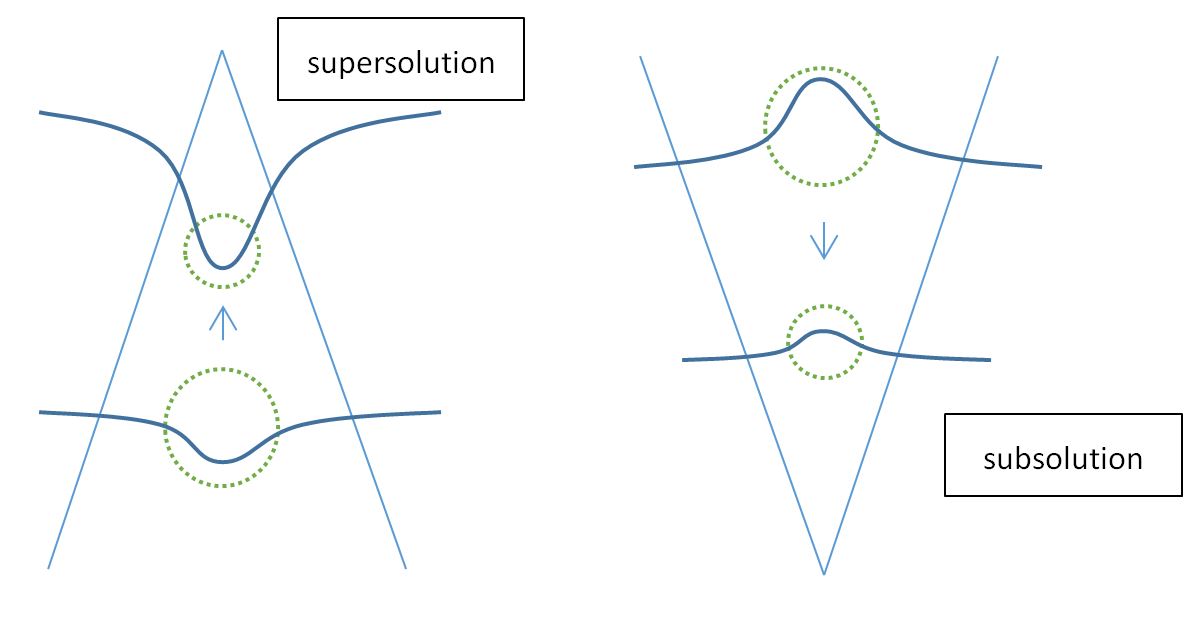}\label{figure1}\caption{Sub and Super Solutions}\end{figure}

\medskip

Roughly speaking, our barriers are of the form $\bar{u}(x,t) = k(t) \Phi_1(x,t)$ and $\ud{u}(x,t)= \tilde{k}(t)\Phi_2(x,t)$, where $k$ and $\tilde{k}$ are carefully chosen to estimate the evolution of density height inside and outside of the singular cones (see Figure 1). The spatial function $\Phi_1$ is a small bump function of height $1$ considered in \cite{loss}. While $\Phi_2$ roughy amounts to $1-\Phi_1$, it presents a nonnegative function with nonempty zero set that moves with finite speed. This is a consequence of the finite propagation property of degenerate diffusion, and is a crucial feature of $\ud{u}$ that is needed to establish the claim above.

\medskip

$\circ$ {\it Construction of subsolution}
 
 \medskip
 
 For  $\e>0$, let us define the parameters
\begin{equation}\label{TT}M:=s^{-\frac{3}{2}}\quad \text{ and }\quad T:=M(1-(4\epsilon)^{2-s})/(2-s).\end{equation} 
Define 
\begin{equation}\label{zz}z(t):=(1-(2-s)M^{-1}t)^\frac{1}{2-s}, \quad t\in[0,T]\end{equation}
so that $z$ satisfies
\[z'=-M^{-1}z^{s-1}, \quad z(0)=1,\quad z(T)=4\epsilon.\]

Let $\varphi\in C_0^\infty(B_1(0))$ be a smooth, non-negative, radially symmetric and decreasing function with the property $|\Delta\varphi|
\leq C$. 
Now for some $r\in(0,\frac{1}{8})$ and a constant $c_s>0$, define 
\[ \bar{u}((x,y),t):=c_{s}z^{s}(t)\Phi((x,y),t):=c_{s}z^s(t)\varphi\left(\frac{x,y-z(t)}{rz(t)}\right).\]
 We will choose $r$ small enough such that $\bar{u}$ is supported in the upper cone $y>3(|x|)$.

\begin{lemma}\label{sub2}
Let $\bar{u}$ be defined as above. Then there exists $r_s>0$ which depends on $s$ (but independent of $\epsilon$) such that for $r \leq r_s$ and $c_{s}$ small enough independent of $\epsilon$, $\bar{u}$ is a subsolution to \eqref{pressure2}.
\end{lemma}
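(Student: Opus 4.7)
The plan is to verify the subsolution inequality
\[\bar u_t - (m-1)\bar u \Delta \bar u - |\nabla \bar u|^2 + V\cdot\nabla\bar u \leq 0\]
pointwise on the open set $\{\bar u > 0\}$, where $\bar u$ is smooth. First I would choose $r \leq 1/4$ so that the support of $\bar u(\cdot,t)$, which is the ball of radius $rz(t)$ around $(0,z(t))$, lies inside the upper cone $\{y > 3|x|\}$ (where $\kappa(x/y)=1$) and inside the annulus $\{3\epsilon \leq |(x,y)| \leq 2\}$ (where $\mu_\epsilon = 1$) for every $t \in [0,T]$. The lower radial bound uses $z(T) = 4\epsilon$, giving $|y| \geq (1-r)z \geq 3\epsilon$. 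On this support both cut-offs are identically $1$ with vanishing gradients, so $V = \nabla^\perp \psi$ with $\psi = \tfrac{1}{2}s^{1/2}[(y-x)^s - (y+x)^s]$ explicitly.

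Next I would decompose the operator as $[\bar u_t + V\cdot\nabla\bar u] - (m-1)\bar u\Delta\bar u - |\nabla\bar u|^2$ and analyze the transport part in a moving frame. Writing $\eta := (x, y-z(t))/(rz(t))$, a direct computation yields $\bar u_t + z'(t)\bar u_y = -c_s s^{3/2} z^{2s-2}[s\varphi(\eta) - \eta\cdot\nabla_\eta\varphi(\eta)]$, which is manifestly nonpositive since $\varphi$ is radial and decreasing. The crucial algebraic fact is that $V(0,z(t)) = (0, -s^{3/2} z(t)^{s-1}) = (0, z'(t))$: the translation velocity of the bump and the drift at the bump center agree exactly. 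This is a direct consequence of the prefactor $\tfrac{1}{2}s^{1/2}$ in $\psi$ together with the normalization $M = s^{-3/2}$ in \eqref{TT}. Taylor expanding $V$ around $(0,z(t))$ to first order in $r$ and combining with the moving-frame derivative then produces the sharper bound
\[\bar u_t + V\cdot\nabla\bar u \leq -c_s s^{5/2} z^{2s-2}[\varphi(\eta) + |\eta||\nabla_\eta\varphi(\eta)|] + C_0 c_s s^{3/2} r z^{2s-2}|\nabla_\eta\varphi(\eta)|,\]
with $C_0$ a universal constant; in particular the naive $O(s^{3/2})$ transport bound is promoted to $O(s^{5/2})$.

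For the diffusion, from $|\nabla\bar u| \lesssim c_s z^{s-1}/r$ and $|\Delta\bar u| \lesssim c_s z^{s-2}/r^2$ one has $|(m-1)\bar u\Delta\bar u + |\nabla\bar u|^2| \leq C_1(m,\varphi) c_s^2 z^{2s-2}/r^2$. Dividing through by $c_s z^{2s-2}>0$, the subsolution inequality becomes
\[-s^{5/2}[\varphi + |\eta||\nabla\varphi|] + C_0 s^{3/2} r|\nabla\varphi| + C_1 c_s/r^2 \leq 0\]
pointwise on the support. I would pick $r_s \lesssim s$: splitting the support into $\{|\eta| \geq 1/2\}$ (where $|\eta||\nabla\varphi| \geq |\nabla\varphi|/2$) and $\{|\eta| < 1/2\}$ (where $\varphi \geq c(\varphi) > 0$ for the chosen bump), the linear-in-$r$ error is then dominated by the nicely signed transport term; finally $c_s$ is taken so small (in terms of $s$, $r_s$, $m$, $\varphi$, but not $\epsilon$) that $C_1 c_s/r_s^2 \leq s^{5/2}/4$, absorbing the diffusion. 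The hard part, and the whole reason for the specific exponents in $\psi$ and $z(t)$, is producing the enhancement from the naive $O(s^{3/2})$ to the improved $O(s^{5/2})$ transport bound: without the exact cancellation between the moving-frame derivative and $(V-(0,z'))\cdot\nabla\bar u$ this improvement fails, and the degenerate diffusion -- whose $c_s/r^2$ scaling is the real bottleneck -- cannot be absorbed while keeping $c_s$ independent of $\epsilon$.
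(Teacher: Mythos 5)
Your setup is mostly sound: the support condition ($r<1/4$ keeps the bump inside the cone $\{y>3|x|\}$ and the region where $\kappa=\mu_\epsilon=1$), the moving-frame identity $\bar u_t+z'\bar u_y=-c_ss^{3/2}z^{2s-2}\bigl[s\varphi(\eta)-\eta\cdot\nabla_\eta\varphi(\eta)\bigr]\le 0$, the exact velocity matching $V(0,z(t))=(0,z'(t))$ coming from the prefactor $\tfrac12 s^{1/2}$ and $M=s^{-3/2}$ in \eqref{TT}--\eqref{zz}, and the absorption of the diffusion by $c_s\lesssim s^{5/2}r^2$ (this is exactly \eqref{suff2} and \eqref{small} in the paper) are all correct. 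The genuine gap is in the ``sharper bound'' you claim from Taylor expanding $V$: the error term does \emph{not} carry a factor of $r$. On the support one has $|V(x,y)-V(0,z)|\lesssim \|DV\|_\infty\, rz|\eta|\lesssim s^{3/2}z^{s-2}\cdot rz|\eta|=s^{3/2}r z^{s-1}|\eta|$, but $|\nabla\bar u|=c_sz^{s-1}r^{-1}|\nabla_\eta\varphi(\eta)|$, so the $r$ and $r^{-1}$ cancel and the correct bound is
\[
\bigl|\bigl(V-(0,z')\bigr)\cdot\nabla\bar u\bigr|\;\lesssim\; c_s\,s^{3/2}\,z^{2s-2}\,|\eta|\,|\nabla_\eta\varphi(\eta)|,
\]
which has exactly the same size and shape as the favorable moving-frame term $-c_ss^{3/2}z^{2s-2}|\eta||\nabla_\eta\varphi(\eta)|$. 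Consequently your absorption step (``the linear-in-$r$ error is dominated by the nicely signed transport term'') fails: with a generic constant $C_0$ you would need $C_0<1$, and no smallness of $r$ helps.

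This is precisely where the paper's proof (and Lemma 3.6 of \cite{loss}) does something finer: instead of a Lipschitz bound with an unspecified constant, one verifies the pure transport inequality $\partial_t\Phi+V\cdot\nabla\Phi\le 0$, which after using the radial monotonicity of $\varphi$ and the $(s-1)$-homogeneity of $V$ reduces to showing that an explicit function (the analogue of $f$ in Lemma \ref{6.2}) is nonnegative near the bump center. There the cancellation is nearly complete: the Hessian at the critical point is $\mathrm{diag}(2s,\,2(2-s))$, so in the $x$-direction the quadratic form that survives has size only $s\,x^2$; the Lipschitz constant of the dangerous component of $V$ at the center is $1-s$, and it is the difference $1-(1-s)=s$ that makes the inequality hold. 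This degenerate positivity (and the need to dominate third-order terms by the small eigenvalue $2s$) is also why $r_s$ must depend on $s$; your guess $r_s\lesssim s$ is not justified by the argument you give. To repair your proof you would have to replace the generic Taylor/Lipschitz estimate by this exact second-derivative computation (or an equivalent sharp-constant cancellation), after which the rest of your argument, including the final choice of $c_s$ independent of $\epsilon$, goes through as in the paper.
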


\begin{proof}
For simplicity, let us omit the subscript $s$, and denote $z=z(t)$.
By definition we only need to check that
\[\bar{u}_t-(m-1)\bar{u}\Delta \bar{u}+V\cdot\nabla \bar{u}\leq 0\]
which is equivalent to
\[
c_{s}(( z^s)\Phi+z^s\partial_t\Phi)-(m-1)c^2_sz^{2s}\Phi\Delta \Phi+c_sz^s V\cdot\nabla\Phi\leq 0.
\]
Recall $|\Delta\varphi|\leq C$, it is sufficient to show that
\begin{equation}\label{suff2}(z^s)'\leq (m-1)c_s z^{2s}\Phi\Delta\Phi \leq -\frac{C(m-1)}{r^2z^2}c_{s}z^{2s} \hbox{ and } \end{equation}
\begin{equation}\label{suff12}\partial_t\Phi+V\cdot\nabla\Phi\leq 0.\end{equation}

The first inequality is equivalent to
 $$
C(m-1)c_{s}\leq sr^2M^{-1}= sr^2M^{-1}.
$$
So once $r=r_s$ and $c_s$ are chosen small enough that 
\begin{equation}\label{small}
c_{s}\leq \frac{sr^2}{C(m-1)M}\lesssim s^{\frac{5}{2}}r^2,
\end{equation}
inequality \eqref{suff2} holds.

Next we claim that there exists a universal constant $r_s>0$ which is independent of $\epsilon$ such that for all $r\leq r_s$, \eqref{suff12} holds. From the computation in Lemma 3.6 \cite{loss}, there exists a universal constant $r_s>0$ which is independent of $\epsilon$ that for all $r\leq r_s$, \eqref{suff12} holds. (Similar calculations will be performed for construction of a supersolution,  see Lemmas \ref{6.2} - \ref{6.5}). It follows that for $r$ and $c$ small enough, $\bar{u}$ is a subsolution and $\bar{u}(0,4\epsilon,T)\sim c_{s} z^s(T)=c_{s}(4\epsilon)^s$. Due to \eqref{small},  $\bar{u}(x,y,0)$ is uniformly $C^k$ for any $k$ with respect to $\e$. 

\end{proof} 


\medskip

$\circ$ { \it Construction of supersolution}

\medskip

Let $M,T,z(t)$ be as previously defined in \eqref{TT}, \eqref{zz}.
\medskip
 
 Let us consider a smooth function $\varphi(R): [0,\infty) \to [0,1] $ with the following properties:
\begin{itemize}
\item[1.]$\varphi$ is increasing with $\varphi(0)=\varphi'(0)=0$ and $\varphi\equiv 1$  for $R\geq 1$.
\item[2.]There exists a constant $C^*>0$ that 
\begin{equation}\label{barrier101}
(m-1)\varphi(\varphi'' +  \frac{1}{R}\varphi') +|\varphi'|^2\leq C^*\varphi
\end{equation}.
\end{itemize}
To construct such $\varphi$, for instance we can choose $\varphi = R^2$ for $|R|\leq 1/2$ and extend it to a smooth function satisfying 1,2. 
 With the above $\varphi$,  define 
$$
\Phi((x,y),t):=\varphi\left(\Big|\frac{(x,y+z(t))}{rz(t)}\Big|\right).
$$
and
\begin{equation}\label{kk} k(t):=\left(C_0-C^*\frac{M r^{-2}}{s}z(t)^{s}\right)^{-1},
\end{equation}
where 
\begin{equation}\label{kk2}
C_0:=2C^*Mr^{-2}s^{-1}(4\epsilon)^{-s}\sim r^{-2} s^{-\frac{5}{2}}\epsilon^{-s}.
\end{equation}
The choice of $C_0$ is to ensure that $k(t)$ stays nonnegative for $0\leq t\leq T$ and $k(0)=\frac{2}{C_0}= \frac{4^s}{C^*}r^{2}s^{\frac{5}{2}}\epsilon^s$.

\medskip

 With the functions $k$ and $\Phi$ as defined above, we will consider a supersolution of the form
\begin{equation}\label{supersolution}
\ud{u}(x,y,t):=k(t)\Phi(x,y,t).
\end{equation}

\begin{lemma}\label{6.2}
Let $\ud{u}(x,y,t)$ be as given in \eqref{supersolution}. There exist $r_s>0$ independent of $\epsilon$ and a universal constant $C_1>0$. If $r\leq r_s$ and $k(0)\leq C_1 r^2s^{\frac{5}{2}}\epsilon^s$, then $\ud{u}$ is a supersolution to \eqref{pressure2} in the time interval $[0,T]$. 
\end{lemma}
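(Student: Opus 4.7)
The plan is to verify that $\ud{u} = k(t)\Phi(x,y,t)$ satisfies the supersolution inequality $\ud{u}_t - (m-1)\ud{u}\Delta\ud{u} - |\nabla\ud{u}|^2 + V\cdot\nabla\ud{u} \geq 0$ pointwise on $\R^2\times[0,T]$, by splitting the resulting expression into a purely time-dependent \emph{height-evolution} inequality for $k(t)$ and a \emph{transport} inequality for the scalar profile $\Phi$. The overall scheme mirrors Lemma~\ref{sub2} with all inequalities reversed.

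First, with $\rho(t) = rz(t)$ and $R = |(x,y+z(t))|/\rho$, a direct chain-rule expansion yields
\[
\ud{u}_t - (m-1)\ud{u}\Delta\ud{u} - |\nabla\ud{u}|^2 + V\cdot\nabla\ud{u}
= k'\Phi + k\Phi_t - \frac{k^2}{\rho^2}\Bigl[(m-1)\varphi\bigl(\varphi''+\tfrac{1}{R}\varphi'\bigr) + |\varphi'|^2\Bigr] + kV\cdot\nabla\Phi.
\]
Invoking the structural estimate \eqref{barrier101}, the bracketed diffusion contribution is bounded above by $C^*\varphi = C^*\Phi$, so the right-hand side dominates
\[
\bigl(k' - C^*k^2/\rho^2\bigr)\Phi + k\bigl(\Phi_t + V\cdot\nabla\Phi\bigr).
\]

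Next, the height bracket is engineered to vanish. Differentiating $k(t)^{-1}$ from \eqref{kk} and using $z' = -M^{-1}z^{s-1}$ from \eqref{zz} gives, after the algebra forced by the form of $C_0$ in \eqref{kk2}, the identity $k' = C^*k^2/\rho^2$ throughout $[0,T]$. The normalization \eqref{kk2}, equivalently the hypothesis $k(0) \leq C_1 r^2 s^{5/2}\epsilon^s$ with a small enough universal $C_1$, is precisely what keeps the denominator $g(t) := k(t)^{-1}$ strictly positive and bounded (so that $k(T)$ stays comparable to $k(0)$) across $[0,T]$. The supersolution inequality is thereby reduced to the transport inequality
\[
\Phi_t + V\cdot\nabla\Phi \geq 0 \quad\text{on }\R^2\times[0,T].
\]

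The transport inequality is the main obstacle, and it is where the restriction $r \leq r_s$ enters. Since $\Phi = \varphi(R)$ and $\varphi' \geq 0$, it suffices to show $R_t + V\cdot\nabla R \geq 0$ on the transition shell $\{R\in(0,1)\}$. A direct geometric calculation yields
\[
R_t = \frac{z'}{z}\Bigl(\frac{\cos\theta}{r} - R\Bigr),\qquad \nabla R = \frac{\hat{n}}{\rho},
\]
where $\theta$ is the polar angle of $(x,y+z(t))$ measured from the upward vertical and $\hat{n}$ is the outward unit radial vector based at the hole-center $(0,-z(t))$. Since $z'<0$, the quantity $R_t$ is negative exactly where $\cos\theta > rR$, so the radial component $V\cdot\hat{n}$ must be positive there with magnitude of order $|z'|/z$. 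For $r\leq r_s$, with $r_s$ depending only on $s$, the hole of $\Phi$ is confined to a narrow neighborhood of the lower cone $y\lesssim -|x|$, keeping it away from the discontinuities of $V$ across $|x|=|y|$, and the swirl structure $V=\nabla^\perp F$ with $F$ from \eqref{vector_field} is designed so that its radial component about $(0,-z(t))$ supplies exactly the needed compensation. These pointwise estimates parallel \eqref{suff12} of Lemma~\ref{sub2} with the sign switched, and their detailed verification is the substance of the forthcoming lemmas (the analogues of Lemmas~3.6--3.7 in \cite{loss}). Implicitly, the PME degeneracy is essential here: it allows the zero set $\{\Phi=0\}=\{(0,-z(t))\}$ to travel with finite speed matched exactly to $|z'|$, so that a moving vacuum point is a legitimate feature of a supersolution.
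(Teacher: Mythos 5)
Your overall reduction is the same as the paper's: split $\ud{u}_t-(m-1)\ud{u}\Delta\ud{u}-|\nabla\ud{u}|^2+V\cdot\nabla\ud{u}\geq 0$ via \eqref{barrier101} into the height inequality $k'\geq C^*k^2/(rz)^2$ and the transport inequality $\partial_t\Phi+V\cdot\nabla\Phi\geq 0$, and note that off the ball $\{R\le 1\}$ the inequality is trivial. The problem is that you never actually prove the transport inequality, and that is the entire nontrivial content of this lemma. You reduce it (correctly) to $R_t+V\cdot\nabla R\geq 0$ on the shell and then assert that the ``swirl structure'' of $V$ ``supplies exactly the needed compensation,'' deferring the verification to ``forthcoming lemmas'' analogous to Lemma 3.6 of \cite{loss}. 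No such lemmas exist for the supersolution: the paper defers to \cite{loss} only for the subsolution (Lemma \ref{sub2}), and explicitly promises that the analogous computation will be carried out here, in Lemmas \ref{6.2}--\ref{6.5}. The required check is not soft: using that $\nabla\varphi$ is parallel to $(x,y+z)$, the explicit formula for $MV$ inside the lower cone, and the $(s-1)$-homogeneity of $V$, the inequality reduces to showing that
\[
f(x,y)=x^2+y(y+1)+\tfrac12|x-y|^{s-1}(x+y+1)+\tfrac12|x+y|^{s-1}(-x+y+1)\geq 0
\]
on a ball $|x|^2+|y+1|^2\leq r^2$. Here $f$ \emph{and} its gradient vanish at $(0,-1)$, so positivity is a second-order phenomenon: one must compute the Hessian ($f_{xx}=2s$, $f_{yy}=2(2-s)$, $f_{xy}=0$) and use that $(0,-1)$ is a strict local minimum. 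This is precisely why $r_s$ depends on $s$ (and degenerates as $s\to 0$) --- not, as you suggest, merely because the hole must be kept away from the discontinuity cone $|x|=|y|$. Your paragraph restates the requirement on $V\cdot\hat{n}$ rather than verifying it, so the proof as written has a genuine gap at its central step.

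A secondary point: your claimed identity $k'=C^*k^2/(rz)^2$ does not follow from \eqref{kk} as printed; differentiating the printed formula with $z'=-M^{-1}z^{s-1}$ gives $k'=-C^*k^2z^{2s-2}r^{-2}<0$, the wrong sign. The identity (and the role of $C_0$ in \eqref{kk2} in keeping $k$ finite and of size $\sim r^2s^{5/2}\epsilon^s$ on $[0,T]$) holds for the evidently intended choice with $z(t)^{-s}$ in place of $z(t)^{s}$; this is a typo in the paper rather than an error of yours, but a complete argument should perform the differentiation rather than assert the identity.
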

 
\begin{proof}

Consider the region $S:=\{(x,y),\; 3\epsilon\leq 3|x|\leq -y\}\times [0,T]$. Showing that $\ud{u}$ is a supersolution is equivalent to 
\[
( k'\Phi+k\partial_t\Phi)-(m-1)k^2\Phi\Delta \Phi-k^2|\nabla\Phi|^2+k V\cdot\nabla\Phi\geq 0.
\]
By \eqref{barrier101},
\[Ck^2z^{-2}r^{-2}\Phi\geq (m-1)k^2\Phi\Delta \Phi+k^2|\nabla\Phi|^2.\]
Thus it suffices to show
\begin{equation}\label{ineq2}
k'\geq  Ck^2z^{-2}r^{-2} \hbox{ and } \partial_t\Phi+V\cdot\nabla\Phi\geq 0\;
\end{equation}

The first inequality in \eqref{ineq2} follows from the construction of $k(t)$. The second inequality can be written as 
\[\nabla\varphi\cdot (x,y)\;z^{-1-s}+\nabla\varphi\cdot (M V z^{-1})\geq 0.\]
 Notice that inside $S$,  $\tilde{V}:= MV$ satisfies
\[\left(\tilde{V}_1,\tilde{V}_2\right)=\frac{1}{2}\left(|x-y|^{s-1}-|-x-y|^{s-1},\;|x-y|^{s-1}+|-x-y|^{s-1}\right).\]
Since $\nabla \varphi$ is in the direction of $(x,y+z)$, the above inequality is equivalent to
\[M(x,y,z):=(x\;,y+z)\cdot (x,y)z^{s-2}+(x\;,y+z)\cdot (\tilde{V_1},\tilde{V_2})\geq 0.
\]
By $(s-1)$-homogeneity of $(\tilde{V}_1,\tilde{V}_2)$, it is then equivalent to verify for $|x|^2+|y+1|^2\leq r^2$, we have
\[f(x,y):=M(x,y,1)= x^2+y(y+1)+\frac{1}{2}|x-y|^
{s-1}(x+y+1)+\frac{1}{2}|-x-y|^{s-1}(-x+y+1)\geq 0.\]
After basic computations
\[f(0,-1)=f_x(0,-1)=f_y(0,-1)=f_{xy}(0,-1)=0,
\]
\[f_{xx}(0,-1)=2s>0,\; f_{yy}(0,-1)=2(2-s)>0.\]
It follows  that $(0,-1)$ is a local minimum of $f$ and therefore there exists $r_s$ such that $f\geq 0$ inside $|x|^2+|y+1|^2\leq r_s^2$. And hence the first inequality of \eqref{ineq2} holds.

\medskip

It follows that $\ud{u}$ is a supersolution in $\mathbb{R}^2\times [0,T]$ with $\ud{u}(x,y,0)\leq C_1r_s^{2}s^{\frac{5}{2}}\epsilon^s$ in the lower half plane ($y\leq 0$).
Since $\varphi(0,0)=0$ at time $T$ we have
\[\ud{u}(0,-4\epsilon,T)=0.\]
\end{proof}


{\bf Proof of Theorem~\ref{divfree2}: }

Now for any module of holder continuity $w(\tau)=C\tau^{\delta}$. Let us select $s<\frac{1}{2}\delta$ and $\epsilon$ arbitrarily small. 
Let $r$ be sufficiently small so that Lemma \ref{sub2} and Lemma \ref{6.2} applies. Let $C_1$ be the constant in Lemma \ref{6.2}.

Consider a smooth function $v_0:\R^d\to \R$ be supported in the upper half plane and
\begin{itemize}
\item[1.] $v_0\geq \frac{C_1}{2} r_s^{2}s^{\frac{5}{2}}\epsilon^s$ in $B_{r_s}(0,1)$;
\item[2.] $v_0\leq C_1r_s^{2} s^{\frac{5}{2}}\epsilon^s$.
\end{itemize}


Let $v_s = v_{s,\e}$ solve \eqref{pressure2} with initial data $v_0$. Let us choose $\epsilon$ small enough so that $\bar{u}$ given in Lemma \ref{sub2} with $c_s:=  \frac{C_1}{2} r_s^{2}s^{\frac{5}{2}}\epsilon^s$ is a subsolution of \eqref{pressure2}.
From comparison principle, the solution to \eqref{pressure2} with initial data $v_0$ satisfies 
\begin{equation}\label{111}
v(0,4\epsilon,T)\geq Cr_s^2s^{\frac{5}{2}} \epsilon^{2s}.
\end{equation}

Next let $\ud{u}$ be the supersolution as given in Lemma \ref{6.2}. Then we have $\ud{u}(\cdot,0)\geq v_0$, thus by comparison principle it follows that $v_2 \geq v$. 
Then at time $T$,
\begin{equation}\label{222}
v_s(0,-4\epsilon,T)\leq \ud{u}(0,-4\epsilon,T)=0.
\end{equation}
Putting \eqref{111} and \eqref{222} together, it follows that 
\[|v_s(0,4\epsilon,T)-v_s(0,-4\epsilon,T)|/|8\epsilon|^\delta\geq   Cr_s^2s^{\frac{5}{2}} \epsilon^{2s-\delta}  = C(s)\epsilon^{2s-\delta}.
\]

\medskip

Finally, let us normalize parameters so that the singular time $T$ is comparable to $1$. 
\[
u_{s,\e}(x,t)=v_{s,\e}(M^\frac{1}{2}x,Mt).
\]
Let us  normalize $T$  by 
$$
\tilde{T}=T/M=\frac{1-(4\epsilon)^{2-s}}{2-s},
$$ which is close to $1/2$ for all $s,\epsilon$ close to $0$.
Recall $V$ defined in \eqref{vector_field}. Then $u_{s,\e}$ solves equation \eqref{pressure2} with $V$ replaced by 
\[\tilde{V}(x) = \tilde{V}_{s}:=M^\frac{1}{2}V(M^\frac{1}{2}x),\]
where $V$ is defined in \eqref{vector_field}. Then $\{\tilde{V}_s\}$ are uniformly bounded in $L^2(\R)$ for all $s$. 

From Theorem \ref{divfree2},
\[|u_s(0,4\epsilon/M^\frac{1}{2},\tilde{T})-u_s(0,-4\epsilon/M^\frac{1}{2},\tilde{T})|/|8\epsilon|^\delta\geq C_s\epsilon^{2s-\delta}.\]
Then as $\epsilon\to 0$, any $C^\delta$- norm with $\delta>2s$ again grows to infinity at time $\tilde{T}$ which is uniformly bounded this time. Thus we can conclude our theorem if we choose 
$$
u_n:= u_{1/n, \e_n}. 
$$
where $\e_n$ is chosen sufficiently small such that the $C^n$ norm of  $u_{1/n,\e_n}(x,0)$ is bounded.

\hfill$\Box$

\subsubsection{Example in $d=3$}

\begin{theorem}\label{divfree}
There exist a sequence of bounded vector fields $\{V_n\}$ which are uniformly bounded in $ L^3(\R^3)$ such that parallel statements as in \eqref{divfree2} holds.\end{theorem}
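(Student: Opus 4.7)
The plan is to adapt the two-dimensional barrier argument of Theorem \ref{divfree2} to three dimensions, replacing the singular cones $\{|x|=|y|\}$ by the planes $\{|y| = |x_1 \pm x_2|\}$ as flagged in the introduction. The pressure equation \eqref{pressure2} and the comparison principle Theorem \ref{thm:comp} extend to $\R^3$ without change, so the genuinely new content lies in the construction of a divergence-free drift $V_n$ in $\R^3$ that is uniformly bounded in $L^3$ and in the verification of the first-order drift compatibility for the three-dimensional barriers.

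For the vector field I would take a fourfold-symmetric scalar profile
\[\psi(x_1,x_2,y) := c\, s^{1/2}\bigl(|y-x_1-x_2|^s + |y-x_1+x_2|^s - |y+x_1+x_2|^s - |y+x_1-x_2|^s\bigr),\]
truncate by a smooth cone cut-off $\kappa(x_1/y,x_2/y)$ supported in $\{\max(|x_1|,|x_2|)\leq |y|/3\}$ and an annular cut-off $\mu_\e(|(x_1,x_2,y)|)$ as in \eqref{muep2}, and define $V_n := \nabla \times A$ for a suitable vector potential $A$ built from $\psi\,\kappa\,\mu_\e$ whose associated flow lifts mass from the lower region $\{y < -|x_1|-|x_2|\}$ into the upper region $\{y > |x_1|+|x_2|\}$. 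On the support of $\kappa$ we have $|V_n| \lesssim s^{1/2}|y|^{s-1}$, and the $(x_1,x_2)$-cross section at height $|y|$ has area $\sim y^2$, so
\[\int_{B_1\setminus B_{2\e}}|V_n|^3\,dx \lesssim s^{3/2}\int_{2\e}^1 y^{3s-1}\,dy,\]
a bound uniform in $s,\e \in (0,1)$, which is precisely the critical exponent in $d=3$.

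With the drift in hand, the barriers are obtained by porting Lemma \ref{sub2} and Lemma \ref{6.2} to three dimensions almost verbatim. With $M = s^{-3/2}$, $T$ and $z(t)$ as in \eqref{TT}--\eqref{zz}, I would set
\[\bar u := c_s z^s(t)\,\varphi\!\left(\frac{(x_1,x_2,y-z(t))}{r z(t)}\right),\qquad \ud{u} := k(t)\,\varphi_*\!\left(\left|\frac{(x_1,x_2,y+z(t))}{r z(t)}\right|\right),\]
with $\varphi,\varphi_*$ and $k(t)$ defined exactly as in \eqref{kk}--\eqref{barrier101}. The diffusive portions of the subsolution and supersolution inequalities (terms in $\Delta\Phi$, $|\nabla\Phi|^2$ and $\partial_t\Phi$) are dimension-blind and go through unchanged; what really needs checking is the drift inequality $\partial_t \Phi + V \cdot \nabla \Phi \geq 0$ for $\ud{u}$ (and the reverse for $\bar u$) in the relevant region.

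The main obstacle is precisely this pointwise drift inequality. After the same rescaling as in 2D, it reduces to verifying
\[f(x_1,x_2,y) := x_1^2 + x_2^2 + y(y+1) + (x_1,x_2,y+1) \cdot \tilde V(x_1,x_2,y) \geq 0\]
on a small ball around the base point $(0,0,-1)$, where $\tilde V$ is the $(s-1)$-homogeneous part of the rescaled drift. The fourfold symmetry of $\psi$ forces $f(0,0,-1) = 0 = \nabla f(0,0,-1)$, and a direct computation of the Hessian at $(0,0,-1)$ produces positive diagonal entries of order $s$ and $(2-s)$ with mixed partials killed by symmetry, so $(0,0,-1)$ is a strict local minimum of $f$ for all small $s$ and hence $f \geq 0$ on some ball of radius $r_s$. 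The extra technical weight compared to the 2D case comes entirely from handling the four singular terms in $\psi$ rather than two; this is where the bulk of the bookkeeping lies. Once this is in place, the conclusion follows the 2D endgame: Theorem \ref{thm:comp} sandwiches $v_s$ between $\bar u$ and $\ud{u}$, yielding $v_s(0,0,4\e,T) \gtrsim_s \e^{2s}$ while $v_s(0,0,-4\e,T) = 0$; the rescaling $u_{s,\e}(x,t) := v_{s,\e}(M^{1/2}x, Mt)$ normalizes $T$ to $\tilde T \sim 1/2$ and preserves the uniform $L^3(\R^3)$-bound of $\tilde V := M^{1/2} V(M^{1/2}\cdot)$ by scale invariance of the $L^d$ norm; and a diagonal choice $s_n = 1/n$ with $\e_n$ small enough that $u_{s_n,\e_n}(\cdot,0)$ is bounded in $C^n$ produces a sequence of solutions violating every fixed $C^\delta$ bound.
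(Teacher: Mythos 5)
Your overall architecture matches the paper's: pressure-equation barriers $\bar u = c_s z^s\varphi$ and $\ud{u}=k(t)\varphi$, comparison via Theorem~\ref{thm:comp}, the reduction of the drift inequality to a pointwise inequality $f\geq 0$ near $(0,0,\pm 1)$ by $(s-1)$-homogeneity, and the final rescaling/diagonalization are all exactly the paper's steps (your normalization $M=s^{-3/2}$ with prefactor $s^{1/2}$ differs from the paper's $M=s^{-4/3}$ with $s^{1/3}$, but is internally consistent and harmless). The gap is in the one place where the three-dimensional case is genuinely new: the drift itself. You define a \emph{scalar} profile $\psi$ and then say ``define $V_n:=\nabla\times A$ for a suitable vector potential $A$ built from $\psi\,\kappa\,\mu_\e$''; but the curl of a scalar is undefined, and the obvious repair $A=(0,0,\psi\kappa\mu_\e)$ gives a field with vanishing $y$-component, hence no vertical transport into and out of the cones -- the entire mechanism of the example. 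The paper instead takes a genuinely vector-valued potential, $F=\tfrac14 s^{1/3}\bigl(-(y+x_1-x_2)^s+(y+x_1+x_2)^s,\;(y-x_1+x_2)^s-(y+x_1+x_2)^s,\;0\bigr)\kappa\kappa\mu_\e$, and the specific combination of the four singular planes in the two components is what makes the subsequent sign computation come out right. Until you fix $A$ explicitly, the claim that $f\geq 0$ near the barrier centers is unverifiable.

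Moreover, your asserted verification is an extrapolation of the 2D structure that does not survive in 3D: for the paper's field one finds $f_{x_ix_i}(0,0,1)=1+s$, $f_{yy}(0,0,1)=4-2s$, $f_{x_1x_2}=0$ but $f_{x_iy}=-\tfrac12(s-1)\neq 0$, so the mixed partials are \emph{not} killed by symmetry and the diagonal entries are not of order $s$; positivity of the Hessian still holds (e.g.\ by diagonal dominance for small $s$), but it has to be checked for the actual field, and the outcome depends on the precise choice of $A$. So the missing content is exactly the part the introduction flags as the extra difficulty in $d=3$: writing down the divergence-free $V$ with the correct singular structure across $|y|=|x_1\pm x_2|$, computing its components in the upper and lower cones, and carrying out the explicit expansion of $f$ at $(0,0,\pm1)$. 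Everything else in your outline (the $L^3$ bound by integrating $s^{3/2}|y|^{3(s-1)}$ over cone cross-sections of area $\sim y^2$, the diffusive parts of the barrier inequalities, and the endgame) does go through as you describe.
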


$\circ$ {\it Construction of vector fields}

\medskip

Let us denote $x=(x_1,x_2,y)\in\mathbb{R}^3$. For $s\in(0,1)$, define 
$$\psi(x_1,x_2,y):=\left(-(y+x_1-x_2)^{s}+(y+x_1+x_2)^{s},(y-x_1+x_2)^{s}-(y+x_1+x_2)^{s},\;0\;\right).
$$
For $\e>0$, let $\kappa$ and $\mu_\e$ be two smooth cut-off functions satisfying
\begin{equation}\label{chi}\chi_{[-\frac{1}{3},\frac{1}{3}]}\leq \kappa \leq \chi_{[-\frac{1}{2},\frac{1}{2}]}\end{equation}
and
\begin{equation}\label{muep}\chi_{[2\epsilon,10]}\leq \mu_\epsilon \leq \chi_{[\epsilon,20]},\quad |\mu_\epsilon'|(x)\leq \frac{2}{|x|}.\end{equation}
Now we define $V:= \nabla\times F$ with  \[F(x):=\frac{1}{4}({s})^{\frac{1}{3}}\psi(x) \kappa\left(\frac{x_1-x_2}{y}\right)\kappa\left(\frac{x_1+x_2}{y}\right)\mu_\epsilon(|x|).\]

We claim that for all $s,\epsilon\in (0,1)$ any small, $V$ is bounded uniformly in $L^3(\R^3)$. To show this, by symmetry it is enough to consider the following regions:
\[S_1:=\left\{(x_1,x_2,y)\in B_1,\;y\geq 3\max\{|x_1+x_2|,|x_1-x_2|\}, |(x_1,x_2,y)|\geq 2\epsilon\right\},\] 
\[S_2:=\left\{(x_1,x_2,y)\in B_1,\;
\frac{1}{2}y\geq x_1+x_2\geq \frac{1}{3}y>0, |(x_1,x_2,y)|\geq 2\epsilon\}\right\},\]
\[S_3:=\left\{(x_1,x_2,y)\in B_1,\; |(x_1,x_2,y)|\leq 2\epsilon\right\}.\] 

\medskip

In $S_1$, $\kappa=\mu_\epsilon=1$ and $\|\nabla\times\psi\|\leq Cs|y|^{s-1}$. Therefore
\[ \left \| V(x)\right\|^3_{L^3(S_1)}\leq C \int_0^1\int_0^{\frac{1}{3}y}\int_0^{\frac{1}{3}y}s^4|y|^{3(s-1)}dx_1dx_2dy\leq s^3C.\]
In $S_2$, since $|\kappa'|,|\kappa|$ are bounded and $\mu_\epsilon=1$, each component in $\nabla\times F$ is bounded by $Cs^\frac{1}{3}|y|^{s-1}$. Since $\mu_\epsilon'\leq \frac{2}{|x|}$ similar bound holds in $S_3$, and we have
\[ \left\|V(x)\right\|^3_{L^3(S_2\cup S_3)}\leq C \int_0^1\int_0^{\frac{1}{2}y}\int_0^{\frac{1}{2}y}s|y|^{3(s-1)}dx_1dx_2dy\leq C.\]

As before, we will prove Theorem~\ref{divfree} by comparison principle.

Take $ M=s^{-\frac{4}{3}}$. We define $T,z(t)$ the same as in \eqref{TT}, \eqref{zz}.
We can write $V=s^\frac{4}{3}(V_1,V_2,V_3)$, inside $S_1$ 

\begin{align*}
V_1&=-\frac{1}{4}(y-x_1+x_2)^{s-1}+\frac{1}{4}(y+x_1+x_2)^{s-1},\\ V_2&=-\frac{1}{4}(y+x_1-x_2)^{s-1}+\frac{1}{4}(y+x_1+x_2)^{s-1}, \\
V_3&=-\frac{1}{4}(y-x_1+x_2)^{s-1}-\frac{1}{4}(y+x_1-x_2)^{s-1}-\frac{1}{2}(y+x_1+x_2)^{s-1}.\end{align*}\medskip

$\circ$ {\it Construction of subsolution}

\medskip

Let $\varphi\in C_0^\infty(B_1(0))$ be a smooth, non-negative, radially symmetric and decreasing function with $|\Delta\varphi|
\leq C$ for some $C>0$. 
For $r\in(0,\frac{1}{9})$ and a constant $c_s$, define
\[ \bar{u}(x,t):=c_{s}z^{s}(t)\Phi(x,t):=c_{s}z^s(t)\varphi\left(\frac{x_1,x_2,y-z(t)}{rz(t)}\right).\]
Then the support of $\bar{u}$ lies inside the upper cone $S_1$. 

\begin{lemma}\label{sub}
Let $\bar{u}$ be defined as above. Then there exists $r_s>0$ independent of $\e$ and a universal constant $C>0$  such that for $r\leq r_s$ and $c_{s} = Cs^\frac{7}{3}r^2$, $\bar{u}$ is a subsolution to \eqref{pressure2}. Furthermore $\bar{u}(0,0,4\epsilon,T)\geq c_{s}(4\epsilon)^s$.  
\end{lemma}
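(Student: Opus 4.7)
My plan is to follow the structure of the 2D argument in Lemma~\ref{sub2}, adapted to the richer four-plane singular structure of the 3D drift. Writing $\bar u = c_s z^s(t) \Phi(x,t)$ with $\Phi(x,t) := \varphi(\xi)$ and $\xi := (x_1, x_2, y-z(t))/(rz(t))$, I would verify
\[\bar u_t - (m-1)\bar u\,\Delta\bar u - |\nabla\bar u|^2 + V\cdot\nabla\bar u \leq 0\]
by discarding the nonpositive term $-c_s^2 z^{2s}|\nabla\Phi|^2$. As in the 2D case, it then suffices to establish the ``amplitude'' inequality
\[(z^s)' \leq (m-1)c_s z^{2s}\Delta\Phi \quad\text{on }\{\Phi>0\},\]
together with the ``transport'' inequality
\[\partial_t\Phi + V\cdot\nabla\Phi \leq 0 \quad\text{on }\operatorname{supp}\bar u.\]

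For the amplitude inequality I would use $(z^s)' = -sM^{-1}z^{2s-2}$ with $M = s^{-4/3}$, together with $|\Delta\Phi| \leq C/(rz)^2$ inherited from $|\Delta\varphi|\leq C$ by the chain rule. The worst case $\Delta\Phi = -|\Delta\Phi|$ forces the algebraic condition $sM^{-1} \geq C(m-1)c_s/r^2$, equivalently $c_s \leq C' s^{7/3}r^2$, which is precisely the scaling dictated in the statement.

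For the transport inequality I first pick $r \leq 1/9$, which forces $\operatorname{supp}\bar u$ into the region $S_1$ where $MV$ coincides with the explicit $(s-1)$-homogeneous vector field listed after the statement. Since $\nabla_\xi\varphi$ is antiparallel to $\xi$ by the radial-decreasing assumption, a short computation of $\partial_t\xi$ reduces the inequality to the scalar condition
\[f(x_1,x_2,y;z) := z^{s-2}(x_1,x_2,y-z)\cdot(x_1,x_2,y) + (x_1,x_2,y-z)\cdot\widetilde V \geq 0,\]
where $\widetilde V := MV$. Using $(s-1)$-homogeneity of $\widetilde V$ and the substitution $(x_1,x_2,y) = z(\eta_1,\eta_2,1+\eta_3)$, one obtains $f = z^s g(\eta)$ for an explicit $g$ independent of $z$ and $\epsilon$. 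Direct differentiation yields $g(0) = 0$, $\nabla g(0) = 0$, and
\[D^2 g(0) = \begin{pmatrix} 1+s & 0 & \tfrac{1-s}{2} \\ 0 & 1+s & \tfrac{1-s}{2} \\ \tfrac{1-s}{2} & \tfrac{1-s}{2} & 4-2s \end{pmatrix},\]
whose leading principal minors $1+s$, $(1+s)^2$, and $(1+s)\bigl(\tfrac72 + 3s - \tfrac52 s^2\bigr)$ are all positive for every $s\in(0,1)$. Hence $g\geq 0$ on $\{|\eta|\leq r_s\}$ for some $r_s > 0$, closing the transport step. The final bound $\bar u(0,0,4\epsilon,T) \geq c_s(4\epsilon)^s$ is then immediate from $z(T) = 4\epsilon$ and $\varphi(0)$ being a positive universal constant.

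The main obstacle I anticipate is the Hessian bookkeeping: in 3D one must track the contributions of the four singular planes $\{y \pm x_1 \pm x_2 = 0\}$ to $\partial_{\eta_3}\widetilde V_i$ and $\partial_{\eta_i}\widetilde V_3$ for $i=1,2$, and verify that the cross coefficients come out as the single common value $(1-s)/2$. Unlike the 2D argument, where only two singular lines contribute, this cancellation is somewhat delicate and must be checked component by component. Once it is in place the positivity of the determinant follows from the factorization above, and the lemma terminates by comparison in the usual way.
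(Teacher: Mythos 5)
Your proposal is correct and follows essentially the same route as the paper: split into the amplitude condition $(z^s)'\leq -C(m-1)c_s z^{2s}/(r z)^2$ (giving $c_s\sim s^{7/3}r^2$ via $M=s^{-4/3}$) and the transport condition $\partial_t\Phi+V\cdot\nabla\Phi\leq 0$, reduce the latter by $(s-1)$-homogeneity to the function $f$ of \eqref{ff} near $(0,0,1)$, and conclude from the vanishing gradient and the Hessian with entries $1+s$, $4-2s$, $0$, $\tfrac{1-s}{2}$. Your explicit check of positive definiteness via the leading principal minors, including the determinant factor $\tfrac{7}{2}+3s-\tfrac{5}{2}s^2>0$, merely makes precise the paper's assertion that $(0,0,1)$ is a local minimum.
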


\begin{proof}

We need to check that
\[\bar{u}_t-(m-1)\bar{u}\Delta \bar{u}+V\cdot\nabla \bar{u}\leq 0\]
inside the support of $\bar{u}$, which lies in $B_{rz}$.
Since $|\Delta\varphi|\leq C$, it suffices to show that
\begin{equation}\label{suff}(z^s)'\leq -\frac{C(m-1)}{r^2z^2}c_{s}z^{2s} \end{equation}
and
\begin{equation}\label{suff1}\partial_t\Phi+V\nabla\Phi\leq 0\end{equation}
in $B_{rz}$.

Since \eqref{suff} is equivalent to $C(m-1)c_{s}\leq sr^2M^{-1},$
it holds when 
\[c_{s}:=\frac{sr^2}{C(m-1)M}\lesssim s^{\frac{7}{3}}r^2.\]

Next notice
\[\partial_t\Phi+\tilde{V}\cdot\nabla\Phi=0, \text{ with }\tilde{V}=-M^{-1}z^{s-2}(x_1,x_2,y).\]
Hence to show \eqref{suff1}, it suffices to show $(\tilde{V}-V)\cdot\nabla\Phi\geq 0$ for $t\in[0,T] $ and for $(x_1,x_2,y-z)\in B_{rz}$. 

\medskip

Recall that $V=s^\frac{4}{3}(V_1,V_2,V_3)$, $M=s^{-\frac{4}{3}}$. Since $\nabla \Phi$ is parallel to $(x_1,x_2,y)$, it suffices to show that
\[((V_1,V_2,V_3)+z^{s-2}(x_1,x_2,y))\cdot (x_1,x_2,y-z)\geq 0 \hbox{ for } \{x: x_1^2+x_2^2+(y-z)^2\leq z^2r^2\}.\]
By $(s-1)$-homogeneity of $V$, this is equivalent to
\begin{equation}\label{rhsxy}( (V_1,V_2,V_3)+(x_1,x_2,y))\cdot(x_1,x_2,y-1)\geq 0 \hbox{ for } \{x: x_1^2+x_2^2+(y-1)^2\leq r^2\},
\end{equation}

The left handside of \eqref{rhsxy} can be written as 
\[f(x_1,x_2,y)=-\frac{1}{4}|y-x_1+x_2|^{s-1}(x_1+y-1)-\frac{1}{4}|y+x_1-x_2|^{s-1}(x_2+y-1)\]\begin{equation}\label{ff}-\frac{1}{4}|y+x_1+x_2|^{s-1}(-x_1-x_2+2(y-1))+x_1^2+x_2^2+y(y-1).
\end{equation} Straightforward computation yields
\[f(0,0,1)=f_{x_1}(0,0,1)=f_{x_2}(0,0,1)=f_{y}(0,0,1),\]
\[f_{x_ix_i}=1+s, f_{yy}=4-2s, f_{x_1x_2}=0, f_{x_iy}=-\frac{1}{2}(s-1).\]
So $(0,0,1)$ is a local minimum of $f$. Hence there exists $r_s>0$ which only depends on $s$ such that \eqref{rhsxy} holds for $(x_1,x_2,y-z)\in B_{r_sz}$, thus we conclude that $v$ is a subsolution of \eqref{pressure2} when $c_s$ and $r_s$ are sufficiently small. In particular observe that 

$$\bar{u}(0,0,4\epsilon,T)\sim c_{s} z^s(T)=c_{s}(4\epsilon)^s \hbox{ with }c_{s}\leq Cs^\frac{7}{3}r^2,
$$ where $C$ is a universal constant which {is independent of $s,\epsilon$.} 

\end{proof}

\medskip

$\circ$ {\it Construction of supersolution}

\medskip

Let $\varphi: [0,\infty) \to [0,1] $ be  as given in \eqref{barrier101} and let $k(t)$ be as given in \eqref{kk}, \eqref{kk2} with $M=s^{-\frac{4}{3}}$.  Recall that \begin{equation}\label{k'C}
k'\geq Cz^{-2}r^{-2}k^2 \text{ for }t\in[0,T].
\end{equation}
 For $r<\frac{1}{9}$, we define
$$
\ud{u}(x,t)=k(t)\Phi(x,t):=k(t)\varphi\l \frac{(x_1,x_2,y+z(t))}{r z(t)}\r.
$$

\begin{lemma}\label{6.5}
Let $\ud{u}$ be defined as above, and let $r_s$ as given in Lemma \ref{6.2}. If $r\leq r_s$, $\ud{u}$ is a supersolution to \eqref{pressure2} in $\R^3\times [0,T]$. Furthermore $\ud{u}(0,0,-4\epsilon,T)=0$ and
$\ud{u}(x,0)\geq C r^2s^\frac{7}{3}\epsilon^s$, where $C$ is independent of $s$ and $\e$. 
\end{lemma}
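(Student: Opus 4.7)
The plan is to follow the same two-step structure as the proof of Lemma~\ref{6.2} in two dimensions, reducing the supersolution inequality for \eqref{pressure2} to (i) an ODE-barrier comparison for $k(t)$ and (ii) a drift-comparison inequality for $\Phi$. Substituting $\ud{u}=k\Phi$ into \eqref{pressure2} and grouping gives
\[
k'\Phi + k(\Phi_t+V\cdot\nabla\Phi) - k^2\bigl[(m-1)\Phi\Delta\Phi+|\nabla\Phi|^2\bigr]\geq 0.
\]
Since $\Phi=\varphi(R)$ with $R=|(x_1,x_2,y+z)|/(rz)$, the 3D Laplacian yields $\Delta\Phi=(rz)^{-2}\bigl(\varphi''+\tfrac{2}{R}\varphi'\bigr)$ and $|\nabla\Phi|^2=(rz)^{-2}(\varphi')^2$. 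A 3D-adjusted version of the barrier inequality \eqref{barrier101} (with $C^*$ enlarged to absorb the extra $(d-1)/R$ term) gives $(m-1)\Phi\Delta\Phi+|\nabla\Phi|^2\leq C^*\Phi/(rz)^2$. Combined with \eqref{k'C} (which in our setting gives $k'\geq C^*k^2/(rz)^2$ from the construction \eqref{kk},\eqref{kk2} with $M=s^{-4/3}$), part (i) is verified.

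For (ii) I will copy the rescaling argument of Lemma~\ref{6.2}: set $x_i=z\tilde x_i$, $y=z\tilde y$, use the $(s-1)$-homogeneity of $V$, and reduce $\Phi_t+V\cdot\nabla\Phi\geq 0$ to showing
\[
f(\tilde x_1,\tilde x_2,\tilde y):=\tilde x_1^2+\tilde x_2^2+\tilde y(\tilde y+1)+\tilde V(\tilde x_1,\tilde x_2,\tilde y)\cdot(\tilde x_1,\tilde x_2,\tilde y+1)\geq 0
\]
on $\{\tilde x_1^2+\tilde x_2^2+(\tilde y+1)^2\leq r^2\}$, where $\tilde V:=MV$. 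This is the 3D analogue of \eqref{rhsxy} but with center $(0,0,-1)$ in the \emph{lower} cone. Because $\psi$ is built from $|\cdot|^s$, the formulas for $\tilde V$ in the lower cone carry opposite signs from the $S_1$-formulas in the excerpt: at $(0,0,-1)$ one finds $\tilde V_1=\tilde V_2=0$ and $\tilde V_3=+1$. Then $f(0,0,-1)=0$ and $\nabla f(0,0,-1)=0$ (the $y$-derivative $2y+1+\tilde V_3=-1+1=0$ is where the positive sign of $\tilde V_3$ matters critically).

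The main obstacle is the Hessian calculation. Using $\partial_a|a|^{s-1}=(s-1)|a|^{s-2}\mathrm{sgn}(a)$ and the chain rule applied in the lower cone, one computes
\[
D^2f(0,0,-1)=\begin{pmatrix} 1+s & 0 & \tfrac{1-s}{2}\\ 0 & 1+s & \tfrac{1-s}{2}\\ \tfrac{1-s}{2} & \tfrac{1-s}{2} & 2(2-s)\end{pmatrix}.
\]
Its principal minors are $1+s$, $(1+s)^2$ and $\det=(1+s)\bigl[2(2+s-s^2)-\tfrac{(1-s)^2}{2}\bigr]$, which are strictly positive on $s\in(0,1)$ (the bracket is $\tfrac{7}{2}+3s-\tfrac{5}{2}s^2$, whose roots lie outside $[0,1]$). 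Positive definiteness yields a strict local minimum at $(0,0,-1)$ with $f(0,0,-1)=0$, so there exists $r_s>0$ depending only on $s$ with $f\geq 0$ on $B_{r_s}$. Choosing $r\leq r_s$ completes (ii).

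Finally, $\ud{u}(0,0,-4\epsilon,T)=k(T)\varphi(0)=0$ because $\varphi(0)=0$ and the center at time $T$ sits at $(0,0,-z(T))=(0,0,-4\epsilon)$. At $t=0$, wherever $R(x,0)\geq 1$ we have $\Phi=1$, so $\ud{u}(x,0)=k(0)=(C_0-A)^{-1}\sim (4\epsilon)^s/(2A)$; with $A=C^*Mr^{-2}/s=C^*r^{-2}s^{-7/3}$, this yields $k(0)\gtrsim r^2s^{7/3}\epsilon^s$ with constant independent of $s,\epsilon$, which is the claimed initial bound.
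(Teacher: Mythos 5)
Your proposal is correct and follows essentially the same route as the paper: split the supersolution inequality into the $k'$-versus-$C^*k^2(rz)^{-2}$ part (via \eqref{barrier101}, with the harmless dimensional adjustment of the radial term) and the transport inequality $\partial_t\Phi+V\cdot\nabla\Phi\geq 0$, reduce the latter by $(s-1)$-homogeneity to a pointwise inequality near the cone point, and finish with the $k(0)\gtrsim r^2s^{7/3}\epsilon^s$ and $\varphi(0)=0$ observations. The only cosmetic difference is that you recompute the Hessian directly at $(0,0,-1)$ and check its minors, whereas the paper notes the expression equals $f(-x_1,-x_2,-y)$ with $f$ from \eqref{ff} and reuses the local-minimum computation of Lemma \ref{sub}; the resulting matrix is the same, so the two verifications coincide.
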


\begin{proof}
As done in Lemma \ref{6.2}, it is sufficient to show
\begin{equation}
k'\Phi\geq  Cz^{-2}r^{-2}k^2\varphi,\label{suffi}\quad \text{ and }\quad
\partial_t\Phi+V\cdot\nabla\Phi\geq 0
\end{equation}
in $S:=\left\{(x_1,x_2,y),\; -y\geq\max\{|x_1+x_2|,|x_1-x_2|\},\; |x|\geq 2\epsilon\right\}$.

\medskip

The first inequality in \eqref{suffi} holds, as before, due to \eqref{barrier101} and the definition of $k(t)$. To show the second inequality, write $V=M^{-1}(V_1,V_2,V_3)$. In $y<0$ 
$$
M\left(\partial_t\Phi+V\cdot\nabla\Phi\right)=\nabla\varphi\cdot (x_1,x_2,y+z) z^{-3+s} +\nabla\varphi\cdot (V_1,V_2,V_3) z^{-1}.
$$
By definition in the region $S$ 
\begin{align*}
V_1&=\frac{1}{4}(y-x_1+x_2)^{ {s-1}}-\frac{1}{4}(y+x_1+x_2)^{ {s-1}},\\ V_2&=\frac{1}{4}(y+x_1-x_2)^{ {s-1}}-\frac{1}{4}(y+x_1+x_2)^{ {s-1}}, \\
V_3&=\frac{1}{4}(y-x_1+x_2)^{ {s-1}}+\frac{1}{4}(y+x_1-x_2)^{ {s-1}}+\frac{1}{2}(y+x_1+x_2)^{ {s-1}}.\end{align*}

As before we only need to verify that there exists $r=r_s$ such that inside $|x_1|^2+|x_2|^2+|y+1|^2\leq r$
\[|x_1|^2+|x_2|^2+(y+1)y+\frac{1}{4}|y-x_1+x_2|^{ {s-1}}(x_1+y+1)+\frac{1}{4}|y+x_1-x_2|^{ {s-1}}(x_2+y+1)\]\[+\frac{1}{4}|y+x_1+x_2|^{ {s-1}}(-x_2-x_1+2y+2)\geq 0.\]
Recall $f$ defined in \eqref{ff}, then the above is equivalent to
\[f(-x_1,-x_2,-y)\geq 0\]
near $(0,0,-1)$ which has already been verified when $r$ is small enough (depending only on $s$). Hence $\ud{u}$ is a supersolution. Note that $k(0)\geq C r^2 s^{\frac{7}{3}}\epsilon^{s} $, and thus we conclude.

\end{proof}

\begin{proof}of Theorem~\ref{divfree}. The proof is now parallel to the two dimensional case Theorem \ref{divfree2}, with the help of Lemma \ref{sub} and Lemma \ref{6.5}.

\end{proof}

\appendix

\section{Proof of Lemma \ref{iteration}}

Suppose $B_k(0),B_0(t)$ are bounded by $M$, then $A_k(0)\leq M^{n_k}$. Solving the differential inequality gives that for all $t\geq 0$
\[A_k(t)\leq e^{-C_0t}\int_{0}^te^{C_0 s}\l {C_1}^{n_k}+{C_1}^k A_{k-1}^{2+{C_1}n_k^{-1}}(s)\r ds+M^{n_k}.\]
If $A_{k-1}(t)$ are uniformly bounded by $M_{k-1}$ for all $t$, we can choose a constant $C_2$ depending only on $(C_0,C_1,M)$ such that
\begin{equation}\label{Ak}A_k(t)\leq CC_1^{n_k}+CC_1^k M_{k-1}^{2+C_1 n_k^{-1}}+M^{n_k}\leq C_2^{n_k}+{C_2}^k M_{k-1}^{2+{C_1}n_k^{-1}}.\end{equation}
We claim that it can be proved by induction that 
\begin{equation}\label{ineq:Ak}A_k(t)\leq C_3^{c_k} \text{ for some constants }C_3(C_0,C_1,M),c_k(C_1,k).\end{equation}
Here $\{c_k\}$ is defined inductively by
\begin{equation}\label{defc}c_0=1,\quad c_{k}:=(2+\frac{C_1}{n_k})c_{k-1}+k+1.\end{equation}
By a slight abuse of notation, we will write $C$'s as constants which only depend on $C_1,C_0,M,a$ (independent of $k$) and they may vary from one expression to the other. 

To see the claim, by induction taking $M_{k-1}=C_3^{c_{k-1}}$ in \eqref{Ak}, we only need
\[C_2^{n_k}+C_2^kC_3^{c_{k-1}({2+C_1n_k^{-1}})}\leq C_3^{c_k}.\]
And it is not hard to see by definition, $n_k\lesssim k+c_{k-1}({2+C_1n_k^{-1}})$. So if choosing $C_3$ large enough, we only need
\[C_3^{k+1+c_{k-1}({2+C_1n_k^{-1}})}\leq C_3^{c_k}\]
which is exactly \eqref{defc}. We proved the claim.

By \eqref{defc} and simple calculations, 
\[c_k=\left(\sum_{j=1}^k j\; b_{j,k}\right)+k+1\quad\text{ where }
b_{j,k}:=\Pi_{i=j}^{k}(2+\frac{C_1}{n_i}).\]
Notice $n_k=2^k(a+1)-a$, there is a constant $C_4(a,C_1)$ that $\frac{C_1}{n_k}\leq C_42^{-k}$ for all $k\geq 0$. So
\[b_{j,k}\leq 2^{k-j+1} \Pi_{i=j}^{k}(1+C_42^{-k-1}).\]
Then we apply the fact that given $x_n\geq 0 $ and $\sum_nx_n\leq C_4$, we have for some other constant $C>0$
$$\Pi_n (1+x_n)\leq  C+C\sum_n x_n.$$
We find out
\[b_{j,k}\leq 2^{k-j+1}C(1+C_4 )\lesssim 2^{k-j} \quad \text{ and }\]
\[c_k\leq  C2^k\sum_{j=1}^k \frac{j}{2^j}+k+1\lesssim 2^{k}\lesssim n_k .\]
So $c_k\leq Cn_k$. By \eqref{ineq:Ak}, we proved that 
\[A_k^{(n_k^{-1})}(t)\leq C_2^{C}\quad \text{ uniformly for all } k\in\mathbb{N}^0\text{ and }t\geq 0.\]

\section{Proof of Lemma \ref{twoseq}}

The idea of the proof is to find a finite $N(p,d),C(N),\epsilon>0$ such that for all natural numbers $k\geq 0$
\begin{equation}\label{ineq:appb}a_{N(k+1)}\leq C^{k}a_{Nk}^{1+\epsilon}.\end{equation}
Then the proof again follows from the iteration, see Lemma 2.2 \cite{dibenedetto1}. We claim that this can be done by simply  plugging the first inequality into the second one for finite times. If we do it once
\[ a_{n+1}\leq C_1^n a_n^{q_2+\frac{2}{d+2}}+C_1^{n} a_n^\frac{2}{d+2}
\l C_1^{n-1} a_{n-1}^{q_2+\frac{2}{d}}+C_1^{n-1} a_{n-1}^\frac{2}{d}b_{n-1}^{q_1}\r^{q_1}.\]
Recall $q_1=1-\frac{2}{p},q_2=1-\frac{1}{p}$. Suppose $n=2k+1$ and from the above we have
\begin{align*}
a_{2(k+1)}&\leq C_1^{2k+1} a_{2k+1}^{q_2+\frac{2}{d+2}}+C_1^{4k+1} a_{2k+1}^{\frac{2}{d+2}}a_{2k}^{q_1q_2+\frac{2q_1}{d+2}}+C_1^{4k+1} a_{2k}^{\frac{2}{d+2}+\frac{2q_1}{d}}b_{2k}^{q_1^2}\\
&\leq C^k\l  a_{2k}^{q_2+\frac{2}{d+2}}+ a_{2k}^{\frac{2}{d+2}+q_1q_2+\frac{2q_1}{d+2}}+ a_{2k}^{\frac{2}{d+2}+\frac{2q_1}{d}}b_{2k}^{q_1^2}\r.
\end{align*}
We used $q_1,q_2<1$ in the first inequality and $a_{2k+1}\leq a_{2k}$ in the second one.
Also since $b_{2k}\leq a_{2k}$, if the following two inequalities hold:
\[\frac{2}{d+2}+(\frac{2}{d}+q_2)q_1>1\;\text{ and }\frac{2}{d+2}+\l\frac{2}{d}+q_1\r q_1>1,\]
\eqref{ineq:appb} holds and we finish the proof. Because $q_2>q_1$, we only need the second inequality to be true.

If the second one fails, then we do one more iteration. Similar computations and arguments imply that we then only need
\[\frac{2}{d+2}+\l\frac{2}{d}+\l\frac{2}{d}+q_1\r q_1\r q_1>1
\]
to be true in order to have \eqref{ineq:appb}.

If we keep doing the process, eventually, we want
\[\frac{2}{d+2}+\underbrace{\l\frac{2}{d}+\l\frac{2}{d}+...\l\frac{2}{d}+q_1\r q_1\r  q_1\r q_1}_{n \text{ brackets }}>1\;\text{ for some }n,\]
which is
\[\frac{2}{d+2}+\frac{2}{d}\l q_1+q_1^2+...+q_1^{n}\r+q_1^{n+1}>1 \;\text{ for some }n.\]
Letting $n=\infty$ gives
\[\frac{2}{d+2}+\frac{2}{d}\frac{q_1}{1-q_1}=\frac{2}{d+2}+\frac{2}{d}\frac{p-2}{2}>1\]
which is equivalent to
\[p>d+\frac{4}{d+2}\]
and we finish the proof of the lemma.


\end{document}